\theoremstyle{plain}
\newtheorem{thm}{Theorem}[section]
\newtheorem{cor}[thm]{Corollary}
\newtheorem{lemma}[thm]{Lemma}
\newtheorem{prop}[thm]{Proposition}
\numberwithin{equation}{section}
\theoremstyle{definition}
\newtheorem{defn}[thm]{Definition}
\newtheorem{example}[thm]{Example}
\newtheorem{rmk}[thm]{Remark}
\theoremstyle{remark}
\newcommand{\BC}{{\mathbb{C}}}
\newcommand{\BZ}{{\mathbb{Z}}}
\newcommand{\CA}{{\mathcal A}}
\newcommand{\CB}{{\mathcal B}}
\newcommand{\CC}{{\mathcal C}}
\newcommand{\CD}{{\mathcal D}}
\newcommand{\CE}{{\mathcal E}}
\newcommand{\CF}{{\mathcal F}}
\newcommand{\CG}{{\mathcal G}}
\newcommand{\CL}{{\mathcal L}}
\newcommand{\CO}{{\mathcal O}}
\newcommand{\Fc}{{\mathfrak{c}}}
\newcommand{\ch}{{\mathrm{ch}}}
\newcommand{\td}{{\mathrm{td}}}
\DeclareFontFamily{OT1}{rsfs}{}
\DeclareFontShape{OT1}{rsfs}{n}{it}{<-> rsfs10}{}
\DeclareMathAlphabet{\curly}{OT1}{rsfs}{n}{it}
\newcommand\Hom{\operatorname{Hom}}
\newcommand{\Aut}{\operatorname{Aut}}
\newcommand\Spec{\operatorname{Spec}}
\newcommand{\Coh}{\mathrm{Coh}}
\newcommand{\Pic}{\mathop{\rm Pic}\nolimits}
\newcommand{\id}{\mathrm{id}}
\newcommand{\Tr}{\mathrm{Tr}}
\newcommand{\TTr}{{\mathbb{T}r}}
\newcommand{\FM}{\mathsf{FM}}
\newcommand{\Cats}{\mathfrak{Cats}}
\newcommand{\GCats}{G\textup{-}\mathfrak{Cats}}
\newcommand{\Ind}{\mathrm{Ind}}
\newcommand{\Func}{\mathrm{Func}}
\newcommand{\HH}{\mathrm{HH}}
\begin{document}
\title{On equivariant derived categories}

\author{Thorsten Beckmann}
\address{Universit\"at Bonn, Mathematisches Institut}
\email{beckmann@math.uni-bonn.de}

\author{Georg Oberdieck}
\address{Universit\"at Bonn, Mathematisches Institut}
\email{georgo@math.uni-bonn.de}
\date{\today}

\begin{abstract}
We study the equivariant category associated to a finite group action on the derived category of coherent sheaves of a smooth projective variety. We discuss decompositions of the equivariant category and faithful actions, prove the existence of a Serre functor, give a criterion for the equivariant category to be Calabi--Yau, and describe an obstruction for a subgroup of the group of auto-equivalences to act. As application we show that the equivariant category of any symplectic action on the derived category of an elliptic curve is equivalent to the derived category of an elliptic curve.
\end{abstract}

\maketitle

\setcounter{tocdepth}{1} 
\tableofcontents

\section{Introduction} 
Equivariant categories appear naturally when
expressing sheaves on a quotient space $X/G$ 
in terms of sheaves on the space $X$ on which a given group $G$ acts.
For example, given a finite group $G$ acting freely on a complex quasi-projective variety $X$
the category of coherent sheaves on the quotient variety $X/G$ is
the $G$-equivariant category of coherent sheaves on $X$:
\[ \Coh(X)_G = \Coh(X/G). \]

More generally, if we start with a category of sheaves on a space
and an (abstract) group action on the category,
the equivariant category
may be viewed
as a form of 'non-commutative quotient'.
Since the action does not have to come from an action on the underlying space,
in general this quotient exists only on a categorical level,
i.e.\ it is not clear whether it is again the category of sheaves on some space.
If we nevertheless hope for some geometric meaning of this category,
it is natural to explore what geometric structures it possesses.
The focus of this paper is to discuss several of these structures (decompositions, duality, cohomology)
for finite group actions on the derived category of coherent sheaves on a smooth projective variety.

We give a short overview of the content of the paper:
In Section~\ref{sec:grp actions} we define group actions on $\BC$-linear additive categories.
We give a natural obstruction class in $H^3(G, \BC^{\ast})$
that governs whether a subgroup of the group of auto-equivalences
can define an action on the category.
We also discuss criteria for the vanishing of this class,
and give an example that the obstruction is effective.

In Section~\ref{sec:equivant cats} we define equivariant categories, recall and prove their $2$-categorical universal properties,
show (following work of Romagny \cite{R}) that the equivariant categories can be taken successively,
and discuss the action of the dual group.

In Section~\ref{sec:decomp equiv} we study
under what conditions the equivariant category is indecomposable.
If it is not, we show that each summand is the equivariant category of a related group action.
Our approach relies on $G$-fixed stability conditions.
These induce stability conditions on the equivariant category \cite{MMS}
and their existence makes equivariant categories behave quite similar to an actual geometric space,
at least with regard to taking components.

In Section~\ref{subsec:Serrefunctor} we prove that
given a triangulated category with a Serre functor
the equivariant category also carries a Serre functor.

In Section~\ref{sec:cohomology} we discuss properties of the Hochschild cohomology of equivariant categories.
By relying on work of Perry \cite{Perry} we give a criterion for the equivariant category
of a Calabi--Yau category to be again Calabi--Yau.
We also describe the induced action by the group of characters on Hochschild cohomology.

In Section~\ref{sec:equiv_cat_ell_curves} we illustrate our methods by
determining the equivariant categories of an elliptic curve
with respect to Calabi--Yau group actions.

We attempted to make the presentation of this paper as self-contained as possible,
so that it can serve also as an introduction to the subject.
Many of the topics we discuss here are scattered around in the literature,
and we expect many others, where we have not found any references,
to be folklore and known to the experts.
Useful sources for the equivariant category of derived categories
are the works of Elagin \cite{Elagin},
Shinder \cite{Shinder}, Perry \cite{Perry},
Kuznetsov and Perry \cite{KP} and others.
We refer to these paper for related discussions and applications.

Studying equivariant categories of derived categories of smooth projective varieties
becomes a rich subject only if interesting group actions are known beyond
geometric automorphism. The motivation for us arose from
symplectic group actions on the derived category of K3 and abelian surfaces.
In this case string theory and holomorphic symplectic geometry alike provide a wide range of interesting examples.
In many of these cases, the equivariant categories are in a non-trivial way equivalent to
the derived category of another symplectic surface.
We refer to \cite{BeckmannOberdieckModuli} for more details
and applications to fixed loci of holomorphic symplectic varieties.

\subsection{Conventions}
We always work over $\BC$. All categories are assumed to be small. 

\subsection{Acknowledgements}
We thank Daniel Huybrechts and Johannes Schmitt for useful conversations
on Theorem~\ref{lem:obstruction to action} and the example in Section~\ref{subsec:does not act} respectively.

\section{Group actions on categories}\label{sec:grp actions}
Let $G$ be a finite group and let
$\CD$ be a category.
\subsection{Definition} \label{sec:defn grp actions}
An action $(\rho, \theta)$ of $G$ on $\CD$ consists of
\begin{itemize}
\item for every $g \in G$ an auto-equivalence $\rho_g \colon \CD \to \CD$,
\item for every pair $g,h \in G$ an isomorphism of functors $\theta_{g,h} \colon \rho_{g} \circ \rho_h \to \rho_{gh}$
\end{itemize}
such that for all triples $g,h,k \in G$ we have the commutative diagram
\begin{equation} \label{associativity}
\begin{tikzcd}[row sep=large, column sep = large]
\rho_{g} \rho_{h} \rho_{k} \ar{r}{\rho_g  \theta_{h,k}} \ar{d}{\theta_{g,h} \rho_k} & \rho_{g} \rho_{hk} \ar{d}{\theta_{g,hk}} \\
\rho_{gh} \rho_{k} \ar{r}{\theta_{gh,k}} & \rho_{ghk}.
\end{tikzcd}
\end{equation}
We will often write $g$ for $\rho_g$.

Recall the $2$-category $\Cats$ of categories,
where the objects are categories,
the morphisms are functors between categories
and the $2$-morphisms are natural transformations.
Similarly we have the 2-category $\GCats$ of categories with a $G$-action.
A morphism or \emph{$G$-functor}
\[ (f,\sigma) \colon (\CD, \rho, \theta) \to (\CD', \rho', \theta') \]
between categories with $G$-actions
is a pair of a functor $f \colon \CD \to \CD'$ together with 2-isomorphisms
$\sigma_g \colon f \circ \rho_g \to \rho_g' \circ f$
such that $(f,\sigma)$ intertwines the associativity relations on both sides, i.e.\ such that the following diagram commutes:
\[
\begin{tikzcd}
f \rho_g \rho_h \ar{r}{f \theta_{g,h}} \ar{d}{\sigma_g \rho_h} & f \rho_{gh} \ar{dd}{\sigma_{gh}} \\
\rho_{g}' f \rho_h \ar{d}{\rho_g \sigma_h} & \\
 \rho_g' \rho_h' f \ar{r}{\theta_{g,h}' f} & \rho_{gh}' f.
\end{tikzcd}
\]
A $2$-morphism of $G$-functors $(f,\sigma) \to (\tilde{f}, \tilde{\sigma})$ is a $2$-morphism $t\colon f \to \tilde{f}$ that inter-twines the $\sigma_g$,
i.e.\ such that the following diagram commutes:
\[
\begin{tikzcd}
f \circ \rho_g \ar{r}{\sigma_g} \ar{d}{t \rho_g} & \rho'_g \circ f \ar{d}{\rho_g' t} \\
\tilde{f} \circ \rho_g \ar{r}{\tilde{\sigma}_g} & \rho'_g \circ \tilde{f}.
\end{tikzcd}
\]

An action of $G$ on $\CD$ is \emph{strict} if $\theta_{g,h} = \id$ for all $g,h \in G$.
In particular, this implies that $\rho_1=\id$.
By \cite[Thm.\ 5.4]{Shinder} every $G$-action on $\CD$ is equivalent to a strict $G$-action on some equivalent category $\CD'$.
Here we say that a $G$-action $(\rho, \theta)$ on $\CD$ is \textit{equivalent} to a $G$-action $(\rho', \theta')$
on $\CD'$ if
we have an equivalence in $\GCats$, i.e.\ 
$(\CD, \rho, \theta) \cong (\CD', \rho', \theta')$.
Because of this,
by passing to an equivalent category one can (and we often will) assume that the action is strict.

\subsection{Obstruction to actions}
Let $\Aut \CD$ be the group of  \emph{isomorphism classes} of auto-equivalences of $\CD$. 
Hence two equivalences 
\[ f_1, f_2 \colon \CD \to \CD \]
are identified in $\Aut \CD$ if and only if there exists an isomorphism of functors $f_1 \xrightarrow{\cong} f_2$.
Every group action on $\CD$ yields a subgroup of $\Aut \CD$.
For $\BC$-linear categories the converse however does not always hold and is obstructed by a class in the group cohomology of $G$ as explained in the following theorem. 

To state it we will need a different notion of equivalence for group actions.
We say that $G$-actions $(\rho,\theta)$ and $(\rho',\theta')$ on $\CD$ are \emph{isomorphic}
if there exists a $G$-functor of the form $(\id_{\CD},\sigma) \colon (\CD,\rho,\theta) \to (\CD,\rho',\theta')$.

\begin{thm} \label{lem:obstruction to action}
Let $\CD$ be a $\BC$-linear category and assume that $\Hom(\id_{\CD}, \id_{\CD}) = \BC \id$. Let $G \subset \Aut \CD$ be a finite subgroup.
\begin{enumerate}
\item[(a)] There exists a class in $H^3(G, \BC^{\ast})$ which vanishes if and only if there exists an action of $G$ on $\CD$
whose image in $\Aut \CD$ is $G$.
Moreover, the set of isomorphism classes of such actions is a torsor under $H^2(G,\BC^{\ast})$.
\item[(b)] There exits a finite group $G'$ and a surjection $G' \twoheadrightarrow G$ such that
$G'$ acts on $\CD$ and the induced map $G' \to \Aut \CD$ factors over the quotient map to $G$.
\item[(c)] If $G = \BZ_n$, then we can take $\BZ_{n^2} \to \BZ_n$ in (b).
\end{enumerate}
\end{thm}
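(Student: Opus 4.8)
The plan is to translate the whole statement into group cohomology with $\BC^{\ast}$-coefficients via the scalar ambiguity in natural isomorphisms. The crucial preliminary is that, since $\Hom(\id_{\CD},\id_{\CD})=\BC\id$, every auto-equivalence $F$ of $\CD$ satisfies $\Aut(F)\cong\BC^{\ast}$: whiskering a natural automorphism of $F$ with a quasi-inverse identifies it with a natural automorphism of $\id_{\CD}$, i.e.\ a nonzero scalar. More generally, whenever two auto-equivalences are isomorphic the set of isomorphisms between them is a $\BC^{\ast}$-torsor under rescaling by $\Aut(\id_{\CD})$. One also records that whiskering such a scalar on either side of an equivalence has the same effect; this centrality is exactly what lets the scalars assemble into group-cohomology cocycles.

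For part (a) I would choose an auto-equivalence $\rho_g$ in each class $g\in G$, with $\rho_1=\id_{\CD}$, and for each pair an isomorphism $\theta_{g,h}\colon\rho_g\rho_h\to\rho_{gh}$, which exists because $gh=[\rho_g\rho_h]$ in $\Aut\CD$. The two composites around the square \eqref{associativity} are then two isomorphisms $\rho_g\rho_h\rho_k\to\rho_{ghk}$, so they differ by a unique scalar $c(g,h,k)\in\BC^{\ast}$. Expanding the two reassociations of $\rho_g\rho_h\rho_k\rho_l$ (the Mac\,Lane pentagon) shows $c\in Z^3(G,\BC^{\ast})$, and rescaling $\theta_{g,h}$ by $\lambda(g,h)$ changes $c$ by $\partial\lambda$; changing the representatives $\rho_g$ within their classes has the same effect, so $[c]\in H^3(G,\BC^{\ast})$ is well defined. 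If $G$ acts with image $G$ then \eqref{associativity} commutes for suitable $\theta$, so $[c]=0$; conversely if $[c]=\partial\lambda^{-1}$ then $\tilde\theta_{g,h}:=\lambda(g,h)^{-1}\theta_{g,h}$ is associative and $(\rho,\tilde\theta)$ is an action. Finally, with $\rho$ fixed the associative choices of $\theta$ form a torsor under $Z^2(G,\BC^{\ast})$, an isomorphism of actions $(\id_{\CD},\sigma)$ modifies $\theta$ exactly by a coboundary, and any two representative systems give isomorphic actions; hence isomorphism classes form a torsor under $H^2(G,\BC^{\ast})$.

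For (b) and (c) the point is that the obstruction is killed by a finite cover. A central extension $1\to A\to G'\to G\to1$ yields $G'\to\Aut\CD$ with image $G$, whose obstruction class is the inflation $p^{\ast}[c]\in H^3(G',\BC^{\ast})$; by (a), $G'$ acts as required once $p^{\ast}[c]=0$, so it suffices to find a finite central extension killing $[c]$. Since $G$ is finite, $H^3(G,\BC^{\ast})\cong H^4(G,\BZ)$ is finite and $[c]$ is torsion. In the Lyndon--Hochschild--Serre spectral sequence of the extension, the inflation of $[c]$ factors through $E_\infty^{3,0}$, and the first transgression $d_2\colon H^1(G,\Hom(A,\BC^{\ast}))\to H^3(G,\BC^{\ast})$ is cup product with the extension class $\epsilon\in H^2(G,A)$; I would choose a finite abelian $A$ and $\epsilon$ so that $[c]$ lies in the image of this cup product, whence $p^{\ast}[c]=0$. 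For (c) this is explicit: $H^3(\BZ_n,\BC^{\ast})\cong H^4(\BZ_n,\BZ)=\BZ/n$ is generated by $y^2$ where $y$ generates $H^2(\BZ_n,\BZ)\cong\BZ/n$, and under the reduction $p\colon\BZ_{n^2}\to\BZ_n$ one computes $p^{\ast}y=n\,y'$, so $p^{\ast}(y^2)=n^2 y'^2=0$ in $H^4(\BZ_{n^2},\BZ)=\BZ/n^2$. Thus inflation annihilates all of $H^3(\BZ_n,\BC^{\ast})$ and $G'=\BZ_{n^2}$ works.

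The main obstacle is twofold. In (a) the genuinely delicate part is the $2$-categorical bookkeeping: verifying that $c$ really is a cocycle and that the rescalings of $\theta$ and of the $\rho_g$ produce exactly coboundaries, all of which relies on the centrality of the scalars established in the preliminary step. In (b) the crux is the general existence of a killing finite central extension, i.e.\ exhibiting a finite $A$ and class $\epsilon$ realizing the torsion class $[c]$ as a transgression; the explicit cyclic computation in (c) is the clean model for this, and the general case amounts to expressing $[c]$ through such cup products.
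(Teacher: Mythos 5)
Your part (a) is essentially the paper's own argument: the scalar $c(g,h,k)$ extracted from the two composites in \eqref{associativity}, the pentagon computation for the cocycle condition, the coboundary ambiguity under rescaling $\theta$ and changing representatives, and the $H^2(G,\BC^{\ast})$-torsor structure all match. Your part (c) is also correct and is a genuinely different, arguably cleaner route than the paper's: the paper runs the Lyndon--Hochschild--Serre spectral sequence for $0 \to \BZ_n \to \BZ_{n^2} \to \BZ_n \to 0$ and deduces from $H^2(\BZ_{n^2},\BC^{\ast})=0$ that $d_2 \colon E_2^{1,1} \to E_2^{3,0}$ is injective, hence surjective (both groups are $\BZ_n$), forcing $E_\infty^{3,0}=0$; your direct ring computation $p^{\ast}y = n y'$, hence $p^{\ast}(y^2) = n^2 y'^2 = 0$ in $H^4(\BZ_{n^2},\BZ) \cong \BZ_{n^2}$, reaches the same conclusion without the spectral sequence.

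The genuine gap is in part (b), and it is not just an omitted verification: your strategy cannot be completed. You restrict to \emph{central} extensions $1 \to A \to G' \to G \to 1$ and propose to realize $[c]$ in the image of the transgression $d_2 \colon H^1(G,\Hom(A,\BC^{\ast})) \to H^3(G,\BC^{\ast})$. If $G$ is perfect, then $H^1(G,\Hom(A,\BC^{\ast})) = \Hom(G,\Hom(A,\BC^{\ast})) = 0$ for every finite abelian $A$, so this image is always zero. Worse, no finite central extension of a perfect group can help at all: if $G$ is perfect it has a universal central extension $\tilde{G} \to G$, which maps to every central extension $G' \to G$ over $G$; hence if inflation to some finite central $G'$ killed a class, inflation to $\tilde{G}$ would already kill it. Now take $G = A_5$, so $\tilde{G} = \mathrm{SL}(2,5)$ with kernel $\BZ_2$. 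In the Lyndon--Hochschild--Serre spectral sequence for this extension, the kernel of inflation $H^3(A_5,\BC^{\ast}) \to H^3(\mathrm{SL}(2,5),\BC^{\ast})$ is generated by the images of $d_2$ on $H^1(A_5, H^1(\BZ_2,\BC^{\ast})) = \Hom(A_5,\BZ_2) = 0$ and of $d_3$ on a subquotient of $H^0(A_5, H^2(\BZ_2,\BC^{\ast})) = 0$, so inflation is injective; but $H^3(A_5,\BC^{\ast}) \cong H^4(A_5,\BZ)$ is nonzero (its $5$-primary part is $\BZ_5$ by a stable-element computation). So for $G = A_5$ no nonzero obstruction class is killed by any finite central extension, and your plan of "expressing $[c]$ through such cup products" has no chance. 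This is precisely why the paper does not use central extensions: it first lifts the (torsion) class to $H^3(G,\BZ_n)$, interprets it as a crossed module $1 \to \BZ_n \to N \to E \to G \to 1$ as in \cite[IV.5]{Brown}, observes that its pullback along $E \to G$ acquires a section $E \to E \times_G E$ and is therefore trivial, and invokes Ellis' theorem \cite[p.\ 502]{Ellis} to choose $E$ finite. The kernel of the resulting surjection $E \to G$ is in general neither abelian nor central, and this extra freedom is essential.
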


In the group cohomology $H^{i}(G, \BC^{\ast})$ above
the group $G$ acts trivially on the coefficient group $\BC^{\ast}$.
For cyclic groups one has
\[ H^i(\BZ_n, \BC^{\ast}) =
\begin{cases}
\BC^{\ast} & \text{ if } i=0, \\
\BZ_n & \text{ if } i \text{ odd, } \\
0 & \text{ if } i>0 \text{ even. }
\end{cases}
\]
Hence the obstruction in part (a) of Theorem~\ref{lem:obstruction to action} can be non-trivial even for cyclic groups. We refer to Section~\ref{subsec:does not act} for a class of such examples in a geometrically well-behaved situations.

\begin{proof}
(a) For every $g \in G$ choose a functor $\rho_g \colon \CD \to \CD$ with image $g$ in $\Aut \CD$
and for every pair $g,h \in G$ choose isomorphisms $\theta_{g,h} \colon \rho_g \rho_h \to \rho_{gh}$.
Then for every triple $g,h,k \in G$ consider the composition given by applying the maps in \eqref{associativity} once counterclockwise around the square,
\begin{equation} \label{eq:sample}
(g)(h)(k) \xrightarrow{\theta_{g,h}k}
(gh)(k) \xrightarrow{\theta_{gh,k}}
(ghk) \xrightarrow{\theta_{g, hk}^{-1}} 
(g)(hk) \xrightarrow{g \theta_{h,k}^{-1}}
(g)(h)(k)
\end{equation}
where we have written $(g)$ for $\rho_g$.
The assumption $\Hom(\id_{\CD}, \id_{\CD}) = \BC$ yields that
this composition is a scalar multiple of the identity,
which we call $c(g,h,k)$.
The first step of the proof is to prove that the assignment
\[ c \colon G^3 \to \BC^{\ast} \]
is a cocycle. Since the $G$-action on $\BC^{\ast}$ is taken to be trivial here,
this boilds down to showing that for all quadruples $g,h,k,l\in G$ we have
\begin{equation} \label{to prove}
c(g,h,kl) c(gh,k,l) = c(h,k,l) c(g, hk, l) c(g,h,k).
\end{equation}

We start with the left hand side.
The constant $c(g,h,kl)$ times the identity is the composition
\begin{equation} \label{a}
(g)(h)(kl) \to (gh)(kl) \to (ghkl) \to (g)(hkl) \to (g)(h)(kl)
\end{equation}
where the maps are all given by the corresponding $\theta$'s as in \eqref{eq:sample}.\footnote{
It is very suggestive to write this composition vertically:
\begin{equation*} 
\begin{tikzcd}[row sep=tiny, column sep = tiny]
(g)\ (h)\ (kl) \ar{d} \\
(gh) \ (kl) \ar{d} \\
(ghkl) \ar{d} \\
(g) \ (hkl) \ar{d} \\
(g) \ (h) \ (kl).
\end{tikzcd}
\end{equation*}
We invite the reader to re-write the other maps below in a similar form,
in order to make the various compositions more clear.
For brevity we will stick to the horizontal notation.}
Similarly to above, $c(gh,k,l) \id$ is the composition
\begin{equation} \label{b}
(gh)(k)(l) \to (ghk)(l) \to (ghkl) \to (gh)(kl) \to (gh)(k)(l).
\end{equation}
Composing \eqref{a} with the $2$-morphism $(g)(h)(kl) \to (gh)(kl)$
and precomsing it with its inverse yields
\begin{equation} \label{aa}
c(g,h, kl) \id = \left[ (gh)(kl) \to (ghkl) \to (g)(hkl) \to (g)(h)(kl) \to (gh)(kl) \right].
\end{equation}
Composing \eqref{b} by $(gh)(k)(l) \to (gh)(kl)$ and
precomposing by its inverse yields
\begin{equation} \label{bb}
c(gh,k,l) \id = \left[ (gh)(kl) \to (gh)(k)(l) \to (ghk)(l) \to (ghkl) \to (gh)(kl) \right].
\end{equation}
By considering the composition \eqref{aa} $\circ$ \eqref{bb} and noting that the
last map in \eqref{bb} is precisely the inverse of the first map in \eqref{aa},
we hence find that
$c(g,h,kl) c(gh,k,l)$ times the identity is given by the following composition
\begin{equation} \label{c}
(gh)(kl) 
\to (gh)(k)(l)
\to (ghk)(l)
\to (ghkl)
\to (g)(hkl)
\to (g)(h)(kl)
\to (gh)(kl).
\end{equation}

We now turn to the right hand side of \eqref{to prove}.
Arguing in a similar manner shows that
$c(h,k,l) c(g,hk, l) c(g,h,k)$ times the identity
is equal to the composition
\begin{multline} \label{d}
(g) (h) (k) (l) \to (gh) (k) (l) \to (ghk) (l) \to (ghkl) \\
\to (g) (hkl)
\to (g) (h) (kl)
\to (g) (h) (k) (l).
\end{multline}
We see that except for the two respective outer arrows, the compositions \eqref{c}
and \eqref{d} agree.
Hence to prove the desired equation \eqref{to prove} it remains to prove that
the compositions
\begin{gather*}
(g) (h) (kl) \to (gh) (kl) \to (gh)(k)(l) \\
(g)(h) (kl) \to (g)(h)(k)(l) \to (gh)(k)(l)
\end{gather*}
agree, or equivalently, that we have a commutative diagram
\[
\begin{tikzcd}[row sep=large, column sep = large]
(g)(h)(kl) \ar{r}{\theta_{gh}(kl)} \ar{d}{ (g)(h) \theta_{kl}^{-1} } 
& (gh)(kl) \ar{d}{ (gh) \theta_{k,l}^{-1} } \\
(g)(h)(k)(l) \ar{r}{\theta_{g,h} (k) (l)} & (gh)(k)(l).
\end{tikzcd}
\]
This follows from the following.

\begin{lemma}
Let $f,f',g, g' \colon \CD \to \CD$ be functors and let $\alpha \colon f \to f'$ and $\beta \colon g \to g'$ be
natural transformations. Then we have a commutative diagram
\[
\begin{tikzcd}
fg \ar{d}{f \beta} \ar{r}{\alpha g} & f' g \ar{d}{f' \beta} \\
f g' \ar{r}{\alpha g'} & f' g'
\end{tikzcd}
\]
\end{lemma}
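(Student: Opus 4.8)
The plan is to verify the commutativity objectwise, using the fact that two natural transformations $fg \to f'g'$ coincide if and only if their components agree at every object $X \in \CD$. So it suffices to fix an arbitrary $X$ and check that the two composites agree after evaluation. First I would unwind the whiskering notation in the square. By definition of horizontal composition, the relevant components at $X$ are $(\alpha g)_X = \alpha_{gX}$, $(f'\beta)_X = f'(\beta_X)$, $(f\beta)_X = f(\beta_X)$ and $(\alpha g')_X = \alpha_{g'X}$. Hence the clockwise composite evaluates at $X$ to $f'(\beta_X) \circ \alpha_{gX}$, while the counterclockwise composite evaluates to $\alpha_{g'X} \circ f(\beta_X)$, both being morphisms $fgX \to f'g'X$.

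The key observation is then that the required identity
\[ f'(\beta_X) \circ \alpha_{gX} = \alpha_{g'X} \circ f(\beta_X) \]
is exactly the naturality square of $\alpha \colon f \to f'$ evaluated on the single morphism $\beta_X \colon gX \to g'X$. Indeed, naturality of $\alpha$ asserts that for every morphism $\phi \colon A \to B$ in $\CD$ one has $f'(\phi) \circ \alpha_A = \alpha_B \circ f(\phi)$; specializing to $\phi = \beta_X$ (so that $A = gX$ and $B = g'X$) produces precisely the displayed equation. Thus the square commutes at $X$, and since $X$ was arbitrary, the original square of natural transformations commutes.

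I do not anticipate any genuine obstacle here: the statement is the familiar interchange law for horizontal composition of natural transformations (equivalently, the bifunctoriality of composition of natural transformations), and its content reduces entirely to a single application of naturality. The only point demanding care is the bookkeeping of the conventions for left and right whiskering, so as to match the square against the correct naturality diagram. In particular, although the identity could symmetrically be read through the naturality of $\beta$, the cleanest route is the one above, exploiting that $\beta$ is whiskered only on the outside and invoking the naturality of $\alpha$ on the morphism $\beta_X$.
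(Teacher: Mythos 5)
Your proof is correct and follows exactly the paper's argument: evaluate the square at an arbitrary object and observe that the resulting identity is the naturality square of $\alpha$ applied to the morphism $\beta_X \colon gX \to g'X$. The only difference is that you spell out the whiskering conventions more explicitly, which the paper leaves implicit.
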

\begin{proof}
For every $A \in \CD$ we need to prove that
\[
\begin{tikzcd}
fg A \ar{d}{f \beta^A} \ar{r}{\alpha^{gA}} & f' g \ar{d}{f' \beta^A} \\
f g' \ar{r}{\alpha^{g'A}} & f' g'
\end{tikzcd}
\]
commutes. This follows when we apply the condition that $\alpha$ is a natural transformation
to the morphism $\beta^A \colon g A \to g' A$.
\end{proof}

Therefore, the above defines a cocycle $c \colon G^3 \to \BC^{\ast}$.
It dependeds on the choice of representative $\rho_g$ and the choice of $\theta_{g,h}$.
By the assumption $\Hom(\id_{\CD}, \id_{\CD}) = \BC$ for any second choice of
isomorphism $\theta'_{g,h} \colon (g)(h) \to (gh)$
we have $\theta'_{g,h} = \lambda(g,h) \theta_{g,h}$ for some $\lambda(g,h) \in \BC^{\ast}$.
The new cocycle $c'$ one obtains in this way is
\[ c'(g,h,k) = \lambda(g,h) \lambda(gh,k) \lambda(g,hk)^{-1} \lambda(h,k)^{-1} c(g,h,k) \]
and hence $c$ differs from $c'$ by a coboundary.
Similarly, any other choice of representative $\rho'_g$ changes $c$ by at most a coboundary.
Hence we obtain a well-defined class
\[ c \in H^3(G,\BC^{\ast}) \]
depending only on $G \subset \Aut \CD$. 
By construction, it vanishes if and only if there is a choice of $\theta_{g,h}$ which satisfies \eqref{associativity}, hence if and only if there is an action $(\rho, \theta)$ of $G$ on the category $\CD$.
Moreover, once an action $(\rho,\theta)$ has been found,
we obtain any other action $\theta'$ by multiplying $\theta$ with an arbitrary 
$\lambda \colon G^2 \to \BC^{\ast}$ which is a $2$-cocycle. 
One checks that such a $\lambda$ is a coboundary if and only if the actions $\theta$ and $\theta'$ are isomorphic. This proves part (a).

(b) We will show that given $\alpha \in H^{3}(G, \BC^{\ast})$
there exists a surjection $G' \twoheadrightarrow G$ from a finite group $G'$ such that the restriction of $\alpha$ to $H^3(G', \BC^{\ast})$ vanishes.
Since $H^{3}(G,\BC^{\ast})$ is finite, $\alpha$ is the image of
some $\beta \in H^3(G, \BZ_n)$ for some $n$ under the map induced by the inclusion $\BZ_n \to \BC^{\ast}$.
Recall that we have the commutative diagram
\[
\begin{tikzcd}
H^3(G', \BZ_n) \ar{r} & H^3(G', \BC^{\ast}) \\
H^3(G, \BZ_n) \ar{u} \ar{r} & H^3(G, \BC^{\ast}) \ar{u}.
\end{tikzcd}
\]
We find that it is enough to construct a surjection $G' \twoheadrightarrow G$  such that the image of $\beta$ in $H^3(G', \BZ_n)$ vanishes.

It is 
known that $\beta$ corresponds to a crossed module
\[ 1 \to \BZ_n \to N \to E \to G \to 1 \]
where the action of $G$ on $\BZ_n$ is trivial \cite[IV.5]{Brown}.
The pullback of $\beta$ along $E \to G$ corresponds to the crossed module
\[ 1 \to \BZ_n \to N \to E \times_G E \to E \to 1. \]
Since this has a section $s\colon E \to E \times_G E$ and $G$ acts trivially on $\BZ_n$, it is equivalent to the trivial crossed module. Concretely, there is a morphism
\[
\begin{tikzcd}
1 \ar{r}&\BZ_n \ar{r}{\id}\ar{d}{\id}& \BZ_n \ar{r}{0}\ar{d}& E \ar{r}{\id}\ar{d}{s}& E\ar{r}\ar{d}{\id}& 1\\
1 \ar{r} & \BZ_n \ar{r} & N \ar{r} & E\times_G E \ar{r} & E \ar{r} & 1
\end{tikzcd}
\]
of crossed modules, which by definition is a map of long exact sequences compatible with the actions of the groups in the crossed modules. The upper crossed module corresponds to the trivial crossed module.

Therefore the pullback of the class $\beta$ to $E$ vanishes.
Moreover, by \cite[p.\ 502]{Ellis} we can choose $E$ to be finite, so setting $G' = E$ yields the claim.\\
(c) Consider the short exact sequence
\[
0 \to \BZ_n \to \BZ_{n^2} \to \BZ_n \to 0
\]
and apply the Lyndon--Hochschild--Serre spectral sequence. We have 
\[ E_2^{3,0}=H^3(\BZ_n,\BC^\ast) \cong \BZ_n, \]
and the composite map
\[
E_2^{3,0} \twoheadrightarrow E_{\infty}^{3,0} \subset H^3(\BZ_{n^2},\BC^\ast) 
\]
corresponds to the pullback map
\[ H^3(\BZ_n, \BC^{\ast}) = \BZ_{n} \to H^3(\BZ_{n^2}, \BC^{\ast}) = \BZ_{n^2}. \]
Since $H^2(\BZ_{n^2},\BC^\ast)=0$, the differential $d \colon E_2^{1,1}\to E_2^{3,0}$ is injective. Hence, the assertion follows from $E_2^{1,1}=H^1(\BZ_n,H^1(\BZ_n,\BC^\ast))\cong \BZ_n$ and $E_2^{3,0} \cong \BZ_n$. 
\end{proof}

\subsection{Action via Fourier--Mukai transforms}
Let $X$ be a smooth complex projective variety and let
\[ D^b(X) = D^b (\Coh(X))  \]
be the bounded derived category of coherent sheaves on $X$.
Given an object
\[ \CE \in D^b(X \times X) \]
the Fourier--Mukai transform $\FM_{\CE} \colon D^b(X) \to D^b(X)$
with kernel $\CE$ is defined as
\[ \FM_{\CE}(A) = q_{\ast}( p^{\ast}(A) \otimes \CE) \]
where $p,q \colon X \times X \to X$ are the projections
and all functors are derived.
For a multitude of applications it is very useful to know whether
a given endofunctor of $D^b(X)$ is given by a Fourier--Mukai transform.
For fully faithful exact functors this was answered affirmatively by Orlov \cite{OrlovsThm}.

\begin{defn}
A \emph{Fourier--Mukai action} of $G$ on $D^b(X)$ consists of\footnote{We write $\CE \circ \CF$ to indicate the composition of correspondences $\CE,\CF$.}
\begin{itemize}
\item for every $g \in G$ a Fourier--Mukai kernel $\CE_g \in D^b(X \times X)$,
\item for every pair $g,h \in G$ an isomorphism $\theta_{g,h} \colon \CE_g \circ \CE_h \to \CE_{gh}$
\end{itemize}
such that for all $g,h,k$ the diagram \eqref{associativity} commutes with $\rho_g$ replaced by $\CE_g$.
\end{defn}

By associating to a kernel its Fourier--Mukai transform
we see that any Fourier--Mukai action on $D^b(X)$ 
induces a group action on $D^b(X)$ in the sense of Section~\ref{sec:defn grp actions}.
We have the following converse.

\begin{lemma} \label{lemma:FMaction}
Let $X$ be smooth complex projective variety and let $G$ be a finite group.
Then any $G$-action on $D^b(X)$ is induced by a unique Fourier--Mukai action.
\end{lemma}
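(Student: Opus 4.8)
The plan is to lift the entire $G$-action, auto-equivalence by auto-equivalence and natural transformation by natural transformation, from $D^b(X)$ up to the world of Fourier--Mukai kernels. The two inputs are Orlov's theorem and a refinement of it at the level of $2$-morphisms. By Orlov's theorem \cite{OrlovsThm}, each auto-equivalence $\rho_g$ is isomorphic to $\FM_{\CE_g}$ for a kernel $\CE_g \in D^b(X \times X)$ that is unique up to isomorphism; here I use that the $\rho_g$ are exact equivalences, so that Orlov applies. I also use the standard fact that convolution of kernels computes composition of transforms, i.e.\ there are canonical isomorphisms $\FM_{\CE_g} \circ \FM_{\CE_h} \cong \FM_{\CE_g \circ \CE_h}$, and that these are coherently associative, so that Fourier--Mukai kernels assemble into a monoidal bicategory mapping to $\Cats$.

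The key technical input, and the main obstacle, is the following strengthening of the uniqueness in Orlov's theorem: for kernels $\CE, \CF$ whose transforms are equivalences, the natural map
\[ \Hom_{D^b(X \times X)}(\CE, \CF) \longrightarrow \mathrm{Nat}(\FM_{\CE}, \FM_{\CF}) \]
sending a morphism of kernels to the induced natural transformation is a bijection; in particular isomorphisms of kernels correspond bijectively to natural isomorphisms of transforms. Granting this, I would first lift each $\theta_{g,h} \colon \FM_{\CE_g} \circ \FM_{\CE_h} \to \FM_{\CE_{gh}}$: composing with the canonical convolution isomorphism turns $\theta_{g,h}$ into a natural isomorphism $\FM_{\CE_g \circ \CE_h} \to \FM_{\CE_{gh}}$, which by the bijection is induced by a unique isomorphism of kernels $\tilde\theta_{g,h} \colon \CE_g \circ \CE_h \to \CE_{gh}$. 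For associativity I would observe that the square \eqref{associativity} written for the $\tilde\theta_{g,h}$ maps, under $\FM$, to the corresponding diagram of natural transformations, which commutes because $(\rho,\theta)$ is a $G$-action; since the lifting map is injective and both compositions around the square lift the same natural isomorphism, the diagram of kernel isomorphisms commutes as well. This produces a Fourier--Mukai action $(\CE_g, \tilde\theta_{g,h})$ inducing $(\rho, \theta)$. Uniqueness follows from the same two inputs: the $\CE_g$ are forced up to isomorphism by Orlov, and once they are fixed the comparison isomorphisms $\tilde\theta_{g,h}$ are forced by injectivity of the lifting map, so the Fourier--Mukai action is unique up to isomorphism of Fourier--Mukai actions.

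The genuinely hard step is the bijection $\Hom(\CE, \CF) \cong \mathrm{Nat}(\FM_\CE, \FM_\CF)$, since Orlov's theorem as usually stated gives only uniqueness of the kernel up to a possibly non-unique isomorphism, which is insufficient to transport the coherence data $\theta$. To establish it I would reduce to the identity functor: because $\FM_\CE$ is an equivalence, precomposition with a quasi-inverse identifies $\mathrm{Nat}(\FM_\CE, \FM_\CF)$ with $\mathrm{Nat}(\id_{D^b(X)}, \FM_\CG)$ for $\CG = \CF \circ \CE^{-1}$, while convolution with the inverse kernel identifies $\Hom(\CE,\CF)$ with $\Hom(\CO_\Delta, \CG)$; one is thereby reduced to showing that natural transformations out of the identity functor $\FM_{\CO_\Delta}$ are computed by $\Hom(\CO_\Delta, -)$. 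This is the content of the uniqueness of Fourier--Mukai kernels \emph{up to a unique isomorphism}, which can be extracted either from the uniqueness of DG-enhancements of $D^b(X)$ (Lunts--Orlov) together with To\"en's derived Morita theory identifying $D^b(X\times X)$ with the category of continuous functors, or from the reconstruction-of-the-kernel results of Canonaco--Stellari. I would cite the cleanest available such statement and use it as a black box; verifying that its hypotheses (smoothness and projectivity of $X$) hold is then routine.
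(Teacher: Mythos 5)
Your construction rests entirely on the claimed bijection $\Hom_{D^b(X\times X)}(\CE,\CF)\to\mathrm{Nat}(\FM_{\CE},\FM_{\CF})$, and this is a genuine gap: the statement is not a theorem, and it cannot be extracted from the sources you cite. Lunts--Orlov plus To\"en's Morita theory identify $D^b(X\times X)$ with the category of \emph{dg}-functors between dg-enhancements, so their Hom-spaces compute dg-natural transformations; the comparison map from these to honest natural transformations of the induced functors on the triangulated category is neither injective nor surjective in general, and bridging that discrepancy is exactly the hard point you are trying to black-box. (To even invoke those theorems you would first need to lift the given action $(\rho,\theta)$ to the dg-enhancement, which is as hard as the lemma itself; note that elsewhere the paper treats such a lift as an extra hypothesis.) Concretely, for $\CE=\CF=\CO_\Delta$ your bijection asserts that every natural transformation $\id_{D^b(X)}\to\id_{D^b(X)}$ is a scalar. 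For plain natural transformations this already fails for $X$ a point: an endotransformation of $\id_{D^b(\mathrm{pt})}$ may act by an arbitrary scalar $\lambda_i$ on each $\BC[i]$, since there are no nonzero morphisms between distinct shifts, so $\mathrm{Nat}(\id,\id)\cong\prod_{i\in\BZ}\BC\neq\BC=\Hom(\CO_\Delta,\CO_\Delta)$. If instead you require shift-compatibility, the resulting statement (that the graded center of $D^b(X)$ agrees with $\HH^{\bullet}(X)$) is a well-known delicate problem that none of the cited results settle. Since you need surjectivity to lift the $\theta_{g,h}$ to kernel morphisms, and injectivity for both associativity and uniqueness, all three steps of your argument are affected.

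The paper's proof is engineered precisely to avoid any such Hom-level comparison: the only data ever transported through $\FM$ are scalar multiples of identity morphisms. One chooses \emph{arbitrary} isomorphisms $\theta'_{g,h}\colon\CE_g\circ\CE_h\to\CE_{gh}$ (these exist by Orlov's uniqueness of kernels), measures the failure of associativity by a $3$-cocycle valued in $\BC^{\ast}$ exactly as in Theorem~\ref{lem:obstruction to action} --- this uses only $\Hom(\CE_g,\CE_g)=\BC$, which holds because convolution with an invertible kernel is an auto-equivalence of $D^b(X\times X)$, so this Hom-space is $H^0(X,\CO_X)$ --- and then observes that, since $\FM$ is $\BC$-linear, the kernel-level cocycle maps to the functor-level cocycle. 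Hence the kernel-level class equals the obstruction class of $G\subset\Aut D^b(X)$, which vanishes because the action $(\rho,\theta)$ is assumed to exist. Uniqueness is then obtained not from faithfulness of the kernel-to-functor assignment, but from a torsor argument: both the Fourier--Mukai actions and the actions on $D^b(X)$ form torsors under $H^2(G,\BC^{\ast})$, compatibly with the comparison map. To repair your write-up, replace the black-box bijection by this scalar-cocycle comparison.
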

\begin{proof}
Given $(\rho, \theta)$, by Orlov's theorem \cite{OrlovsThm} for every $g \in G$ there exists a kernel $\CE_g \in D^b(X \times X)$ with $\FM_{\CE_g} \cong \rho_g$.
By uniqueness of the kernels there also exists isomorphisms $\theta'_{g,h} \colon \CE_g \circ \CE_h \cong \CE_{gh}$.
Since $\Hom(\CE_g, \CE_g) = \BC$, arguing as in the proof of Theorem~\ref{lem:obstruction to action}
yields a class $\alpha$ in $H^3(G, \BC^{\ast})$ which vanishes
if and only if after replacing $\theta'$ by a boundary the pair $(\CE_g, \theta')$ defines a Fourier--Mukai action.
By passing to Fourier--Mukai transforms one sees that $\alpha$ is the same class
as the obstruction class defined by $G \subset \Aut D^b(X)$ and hence has to vanish (since $G$ acts on $D^b(X)$).
This shows that there is a Fourier--Mukai action of $G$ on $D^b(X)$ such that $\FM_{\CE_g} \cong \rho_g$.

A similar argument shows further that the possible isomorphism classes of such Fourier--Mukai $G$-actions
are a torsor under $H^2(G,\BC^{\ast})$.
Moreover the map that associated to a Fourier--Mukai action the induced action on $D^b(X)$
is equivariant with respect to the action of $H^2(G, \BC^{\ast})$.
This implies that we can also match (in a unique way, up to isomorphism) the $2$-isomorphisms $\theta$ for the action on $D^b(X)$ and for the Fourier--Mukai action.
\end{proof}

\section{Equivariant categories} \label{sec:equivant cats}
Let $(\rho, \theta)$ be an action of a finite group $G$
on an additive $\BC$-linear category $\CD$.

\subsection{Definition}
The equivariant category $\CD_{G}$ is defined as follows:
\begin{itemize}
\item Objects of $\CD_G$ are pairs $(E,\phi)$ where $E$ is an object in $\CD$ and $\phi = (\phi_g \colon E \to \rho_{g} E)_{g \in G}$ is a family of isomorphisms
such that
\begin{equation} 
\label{compatibility}
\begin{tikzcd}
E \ar[bend right]{rrr}{\phi_{gh}} \ar{r}{\phi_g} & \rho_{g}E \ar{r}{\rho_g \phi_h} &  \rho_{g} \rho_{h} E \ar{r}{\theta_{g,h}^E} & \rho_{gh} E
\end{tikzcd}
\end{equation} 
commutes for all $g,h \in G$.
\item A morphism from $(E,\phi)$ to $(E', \phi')$ is a morphism $f \colon E \to E'$ in $\CD$
which commutes with linearizations, i.e.\ such that
\[
\begin{tikzcd}
E \ar{r}{f} \ar{d}{\phi_g} & E' \ar{d}{\phi'_g} \\
g E \ar{r}{\rho_gf} & g E'
\end{tikzcd}
\]
commutes for every $g \in G$.
\end{itemize}

The definition of morphism can be reformulated as follows.
For any objects $(E,\phi)$ and $(E',\phi')$ in $\CD_G$
consider the action of $G$ on $\Hom_{\CD}(E,E')$ via 
\[ f \mapsto (\phi'_g)^{-1} \circ \rho_g(f) \circ \phi_g. \]
Then we have
\[ \Hom_{\CD_G}( (E,\phi), (E',\phi') ) = \Hom_{\CD}(E,E')^G. \]

\subsection{Induction and restriction functor}
\label{subsec:indresfunc}
Given a subgroup $H \subset G$ we have a \emph{restriction functor}
\[ \mathrm{Res}^G_H \colon \CD_G \to \CD_H \]
defined by restricting the linearization of an equivariant object to the subgroup $H$.

In the opposite direction we have an \emph{induction functor}
\[ \Ind_H^{G} \colon \CD_H \to \CD_G \]
which is constructed as follows: Let $g_i$ be representatives of the cosets $G/H$,
where one of the $g_i$ equals $1 \in G$ (representing the unit coset).
Then we set
\[ \Ind_H^{G}(E,\phi) = \left( \bigoplus_{i} \rho_{g_i} E, \phi^G \right), \]
where for every $g \in G$ the restriction of $\phi^G_g$ to the summand $\rho_{g_j} E$ is defined by
\[ \rho_{g_j} E \xrightarrow{\rho_{g_j} \phi_{h}} \rho_{g_j} \rho_{h} E \xrightarrow{\theta_{g_j,h}}
\rho_{g_j h} \xrightarrow{\theta_{g, g_i}^{-1}} \rho_{g} \rho_{g_i} E
\]
where $g_i$ and $h$ are defined by $g g_i = g_j h$. 
By a similar argument as in \cite[Lem.\ 3.8]{Elagin} one has that
$\Ind_H^{G}$ is both left and right adjoint to $\mathrm{Res}^G_H$, see also \cite[Lem.\ 3.3]{Perry}. 

In the case of the trivial subgroup, $H=1$, the restriction
and induction functors specialize to the \emph{forgetful functor}
\[ p \colon \CD_G \to \CD, \quad (E,\psi) \mapsto E \]
which forgets the linearization, and the \emph{linearization functor}
\begin{equation} 
q\colon \CD \to \CD_G, \quad E \mapsto \left( \oplus_{g \in G} \rho_g E, \phi \right). \label{linearlization_functor} \end{equation}

\subsection{Universal property}
Equivariant categories can be viewed as limits in the category $\Cats$.
To explain this let us view a finite group $G$
as the 2-category $\CG$ with one object,
1-morphisms given by the elements of $G$, and only identities as 2-morphisms.
Giving a $G$-action on a category $\CD$
is then equivalent to giving a $2$-functor
\[ \CG \to \Cats \]
which takes the unique object in $\CG$ to the category $\CD$.
The equivariant category is the $2$-limit of this morphism:
\[ \CD_G = 2\text{-lim}(\CG \to \Cats). \]
This yields a universal property of the equivariant category
that we state in explicit terms below.
The proof follows by general theory,
but for concreteness we will sketch a direct argument.
We also refer to \cite[App.\ A]{Wasch} for a discussion of
$2$\text{-(co)limits} in $\Cats$, and to
\cite[Prop.\ 4.4]{GK} for more details in the case of equivariant categories.

Given a category $\CA$, let $\iota(\CA)$ denote the category $\CA$ endowed with the trivial $G$-action. 
For any two categories $\CA,\CB$ let
\[ \Hom(\CA, \CB) \]
be the category whose objects are functors $\CA \to \CB$ and whose morphisms are natural transformations of such functors, and similarly for categories with $G$-action.

\begin{prop} \label{prop:UP1}
Let $G$ be a finite group which acts on a $\BC$-linear category $\CD$.
Then for every category $\CA$ we have a bifunctorial equivalence of categories
\[
\Hom_{\Cats}{(\CA, \CD_G)} \cong \Hom_{\GCats}{(\iota(\CA),\CD)}.
\]
\end{prop}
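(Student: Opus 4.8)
The plan is to construct the two functors realizing the equivalence and check they are mutually quasi-inverse, unwinding the definition of the equivariant category. The essential observation is that an object of $\CD_G$ is nothing but an object of $\CD$ together with a compatible family of isomorphisms $\phi_g \colon E \to \rho_g E$, and that this is precisely the data of a $G$-functor from $\iota(\mathsf{pt})$ (the point with trivial $G$-action) to $\CD$. First I would define a functor
\[
\Phi \colon \Hom_{\Cats}(\CA, \CD_G) \to \Hom_{\GCats}(\iota(\CA), \CD)
\]
by composing with the forgetful functor $p \colon \CD_G \to \CD$: given $F \colon \CA \to \CD_G$, the composite $p \circ F \colon \CA \to \CD$ underlies a $G$-functor because for each $a \in \CA$ the object $F(a) = (E_a, \phi^a)$ comes equipped with isomorphisms $\phi^a_g \colon E_a \to \rho_g E_a$, and these assemble into the structure $2$-isomorphisms $\sigma_g \colon (p\circ F)\circ \iota(\rho_g) = p\circ F \to \rho_g \circ (p\circ F)$ that a $G$-functor out of $\iota(\CA)$ requires. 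Here one uses that the $G$-action on $\iota(\CA)$ is trivial, so $\iota(\rho_g) = \id$, and that the compatibility diagram \eqref{compatibility} for the $\phi^a_g$ is exactly the cocycle condition that $(p\circ F, \sigma)$ must satisfy to be a $G$-functor; naturality of $\sigma$ in $a$ follows from the fact that morphisms in $\CD_G$ commute with linearizations.

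Conversely I would define $\Psi$ in the other direction: given a $G$-functor $(g, \sigma) \colon \iota(\CA) \to \CD$, one sends $a \in \CA$ to the pair $(g(a), \phi^a)$ where $\phi^a_h := \sigma_h^a \colon g(a) \to \rho_h(g(a))$. The $G$-functor compatibility of $\sigma$ translates verbatim into the condition \eqref{compatibility}, so $(g(a), \phi^a)$ is a genuine equivariant object; a morphism in $\CA$ maps to a morphism commuting with the linearizations by naturality of $\sigma$. One then checks $\Phi$ and $\Psi$ are quasi-inverse: on objects this is essentially tautological since both directions just repackage the same family of isomorphisms, and on the level of natural transformations one verifies that a $2$-morphism of functors $\CA \to \CD_G$ corresponds exactly to a $2$-morphism of $G$-functors, i.e.\ a natural transformation intertwining the $\sigma_g$, which is precisely the defining condition of a morphism in $\CD_G$. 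Bifunctoriality in $\CA$ (and in $\CD$) follows because all the constructions are defined by pre- and post-composition and hence are strictly natural.

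The main obstacle is bookkeeping rather than any genuine difficulty: one must be careful about the direction and placement of the structure isomorphisms $\sigma_g$ versus $\phi_g$, and in particular to verify that the associativity/cocycle diagram defining a $G$-functor matches \eqref{compatibility} under the dictionary $\sigma_g \leftrightarrow \phi_g$ once the trivial action on $\iota(\CA)$ is accounted for. The cleanest way to manage this is to treat the case $\CA = \mathsf{pt}$ first, where the statement reduces to the tautology that objects of $\CD_G$ are $G$-functors $\iota(\mathsf{pt}) \to \CD$ and morphisms of $\CD_G$ are $2$-morphisms of such, and then observe that the general case follows by evaluating at each object of $\CA$ and tracking naturality. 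Since, as noted in the excerpt, the proposition is a formal consequence of $\CD_G$ being the $2$-limit $2\text{-lim}(\CG \to \Cats)$, I would present the above only as the promised direct sketch, leaving the routine diagram chases to the reader.
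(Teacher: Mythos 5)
Your proposal is correct and follows essentially the same route as the paper: both directions of the equivalence are built from the dictionary between the structure $2$-isomorphisms $\sigma_g$ of a $G$-functor out of $\iota(\CA)$ and the linearizations $\phi_g$ of equivariant objects, with the forgetful functor $p$ (equipped with its canonical $G$-functor structure $\tau_g^{(E,\phi)}=\phi_g$) providing the inverse direction exactly as in the paper. The only cosmetic differences are the order in which you present the two functors and your optional reduction to $\CA = \mathsf{pt}$; the paper instead reduces to the case of a strict action, but neither reduction changes the substance of the argument.
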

\vspace{5pt}

The proposition implies that every $G$-functor $\iota(\CA) \to \CD$ can be factored via the equivariant category:
\[
\begin{tikzcd}
& & \CD_G \ar{d}{p} \\
\iota(\CA) \ar{rr}{ G\text{--functor}} \ar[dotted]{urr}{\exists !} &&  \CD
\end{tikzcd}
\]
Conversely, the forgetful functor $p \colon \CD_G \to \CD$ carries a natural structure of a $G$-functor.
Hence composing any functor $\CA \to \CD_G$ with $p$ yields a $G$-functor $\iota(\CA) \to \CD$.

\begin{proof}[Proof of Proposition~\ref{prop:UP1}]
We can assume that the $G$-action on $\CD$ is strict.

Let $\CA$ be a category and let $(f, \sigma) \colon \iota(\CA) \to \CD$ be a $G$-functor.
By definition of a $G$-functor, the $2$-isomorphisms $\sigma_g \colon f \to \rho_g f$
fit into the commutative diagram
\begin{equation} \label{dsf1111}
\begin{tikzcd}
f \ar{r}{\sigma_g} \ar[bend right]{rr}{\sigma_{gh}} & \rho_g f \ar{r}{\rho_g \sigma_h} & \rho_g \rho_h f.
\end{tikzcd}
\end{equation}
Thus for any $E \in \CA$ the collection 
\[ (\sigma_g)^E \colon fE \to \rho_g fE \]
is a $G$-linearization of $fE$.
Moreover, since $\sigma_g$ is a natural transformation, for any morphism $\psi \colon E \to F$ in $\CA$ the map $f \psi \colon f E \to fF$ is $G$-invariant with respect to these linearizations.
This yields a functor
\[ F \colon \CA \to \CD_G, \ \ E \mapsto (fE, \sigma^E),\  \psi \mapsto f \psi. \]
One further checks that any
natural transformation of $G$-functors $(f,\sigma) \to (f', \sigma')$ yields
a natural transformation $F \to F'$ of the corresponding functors.
These assignments define a functor
\begin{equation} \label{hom1} \Hom_{\GCats}{(\iota(\CA),\CD)} \to  \Hom_{\Cats}{(\CA, \CD_G)}. \end{equation}

Conversely, the $2$-isomorphisms
\[ \tau_g \colon p \xrightarrow{\cong} \rho_g \circ p \]
defined using the linearization $\tau_g^{(E,\phi)} = \phi_g$ for all $g \in G$ give $p$ the structure
of a $G$-functor $(p,\tau)\colon \iota(\CD_G) \to \CD$. 
Any functor $F \colon \CA \to \CD_G$ is automatically a $G$-functor $(F,\id) \colon \iota(\CA)\to \iota(\CD_G)$ and hence we obtain
a $G$-functor 
\[ (p F, \tau F)\colon \iota(\CA) \to \CD. \]
Similarly, for any natural transformation $t \colon F \to F'$ of functors $\CA \to \CD_G$
we obtain the natural transformation of $G$-functors 
\[ pt \colon (pF, \tau F) \to (pF', \tau F'). \]
This yields an inverse to \eqref{hom1}.
\end{proof}

If $\CD$ is an abelian category we also have a universal property with respect to the functor $q$.
It says that $\CD_G$ is the 2-colimit in the 2-category of abelian categories where the morphisms are left exact functors. 
We state it here for completeness, but it will not be essential later on. 
For any two abelian categories let us denote by
\[ \Hom_{\text{l.e.}}(\CA, \CB) \subset \Hom(\CA,\CB) \] 
the subcategory of left-exact additive functors.

\begin{prop} \label{prop:UP2}
Let $G$ be a finite group which acts on an abelian category $\CD$.
Then for any abelian category $\CA$ we have the equivalence of categories
\[
\Hom_{\Cats, \textup{l.e.}}{(\CD_G, \CA)} \cong \Hom_{\GCats,, \textup{l.e.}}{(\CD, \iota(\CA))}.
\]
\end{prop}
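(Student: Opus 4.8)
The plan is to exhibit the two functors in the claimed equivalence and check they are mutually quasi-inverse, using that the linearization functor $q$ of \eqref{linearlization_functor} is the universal cocone. First I would upgrade $q \colon \CD \to \CD_G$ to a $G$-functor $(q,\sigma)\colon \CD \to \iota(\CD_G)$: the required $2$-isomorphisms $\sigma_g \colon q\circ\rho_g \to q$ are the reindexing isomorphisms $\bigoplus_h \rho_h\rho_g E \cong \bigoplus_h \rho_{hg}E = \bigoplus_{h'}\rho_{h'}E$, built from the $\theta_{h,g}$ and compatible with the linearization $\phi^G$; the cocycle identity for $\sigma$ follows from the associativity constraint \eqref{associativity}. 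Since $q=\Ind^G_1$ is a finite direct sum of the exact auto-equivalences $\rho_{g_i}$ it is exact, in particular left-exact. Composing with $q$ then defines the forward functor
\[
\Phi \colon \Hom_{\Cats,\textup{l.e.}}(\CD_G,\CA) \to \Hom_{\GCats,\textup{l.e.}}(\CD,\iota(\CA)), \qquad F \mapsto (F\circ q,\, F\sigma),
\]
which clearly sends natural transformations to natural transformations of $G$-functors and preserves left-exactness.

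Next I would construct the inverse $\Psi$ by a descent/averaging argument, which is where $\BC$-linearity is essential. Given a left-exact $G$-functor $(h,\tau)\colon \CD \to \iota(\CA)$, with $\tau_g \colon h\rho_g \to h$, define for each object $(E,\phi)\in\CD_G$ the endomorphisms $a_g = \tau_g^{E}\circ h(\phi_g) \in \Aut_{\CA}(hE)$. Using the compatibility \eqref{compatibility} of $\phi$ together with the cocycle relation for $\tau$, one checks that $g\mapsto a_g$ is a group homomorphism $G \to \Aut_{\CA}(hE)$, so that the averaging operator $e=\tfrac{1}{|G|}\sum_{g\in G}a_g$ is an idempotent endomorphism of $hE$; it is natural in $(E,\phi)$ because a morphism in $\CD_G$ commutes with the linearizations and hence, after applying $h$ and $\tau$, intertwines the $a_g$. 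Thus $e$ is a natural idempotent on the functor $h\circ p$, and I would set $\Psi(h,\tau)=\bar h$ with $\bar h(E,\phi)=\operatorname{im}(e)$, the corresponding direct summand (here I use that the abelian category $\CA$ is idempotent complete). As $\bar h$ is a retract of the left-exact functor $h\circ p$, it is itself left-exact.

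It then remains to produce natural isomorphisms $\Phi\Psi\cong\id$ and $\Psi\Phi\cong\id$. For $\Phi\Psi$ I would evaluate $\bar h(qE)$: here $h p(qE)=\bigoplus_{g}h\rho_g E\cong hE\otimes_{\BC}\BC[G]$ via the $\tau_g$, the action $a$ is the regular representation, and its image under the averaging idempotent is canonically $hE$; tracking the $G$-functor structure gives $\bar h\circ q\cong h$ as $G$-functors. For $\Psi\Phi$ the key structural input is the ambidextrous adjunction $q\dashv p\dashv q$ from \S\ref{subsec:indresfunc}: over $\BC$ the unit and counit exhibit every equivariant object $(E,\phi)$ as a canonical retract of the induced (``free'') object $qp(E,\phi)=qE$, the splitting being exactly the image of the averaging idempotent in $\CD_G$. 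Applying a left-exact (in particular additive) functor $F$ preserves this retract, and one identifies $\operatorname{im}\!\big(F e\big)=F(E,\phi)$ with $\overline{Fq}(E,\phi)$, giving $\Psi\Phi(F)\cong F$.

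The main obstacle I anticipate is precisely the verification of $\Psi\Phi\cong\id$: one must show that a left-exact functor $F$ is determined by its restriction $Fq$ to induced objects together with the splitting idempotents, i.e.\ that $(E,\phi)$ is naturally a summand of $qE$ and that this splitting is compatible with the averaging idempotent used to define $\bar h$. This rests on the finite-group, characteristic-zero averaging and on the two-sided adjunction $q\dashv p\dashv q$; the left-exactness hypothesis enters only to keep both constructions inside the subcategories of left-exact functors, since on objects the correspondence already works for arbitrary additive functors. The remaining checks (the cocycle for $\sigma$, the homomorphism property of $g\mapsto a_g$, and the bifunctoriality in $\CA$) are routine diagram chases that I would not carry out in detail.
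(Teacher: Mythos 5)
Your proposal is correct, and its skeleton matches the paper's: both directions of the equivalence precompose with the linearization functor $q$ of \eqref{linearlization_functor}, and the inverse is built from the $G$-action $a_g=\tau_g^{E}\circ h(\phi_g)$ on $h(E)$. Where you genuinely diverge is in how this $G$-action is collapsed to a single object, and hence in how the key verification $\Psi\Phi\cong\id$ runs. The paper lifts $(h,\tau)$ to a functor $\tilde h\colon \CD_G\to\CA_G$ and post-composes with the $G$-invariants functor $(-)^G$, a finite limit; the identification $\Psi\Phi(F)\cong F$ then rests on the fact that a left-exact $F$ commutes with this limit, applied to the paper's identification of $(E,\phi)$ with the invariants of $qE$ equipped with the $G$-action \eqref{dddd}. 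You instead take the image of the characteristic-zero averaging idempotent $e=\tfrac{1}{|G|}\sum_g a_g$ and check $\Psi\Phi\cong\id$ by exhibiting $(E,\phi)$ as a retract of $qE$ split by this idempotent, using that any additive functor preserves split idempotents. In characteristic zero the two constructions agree on objects, but your mechanism proves a sharper fact, which you correctly flag: the correspondence already works for arbitrary additive functors, left-exactness serving only to keep both sides inside the stated subcategories; the paper's mechanism is what makes the left-exactness hypothesis intrinsic (invariants as a limit) and would survive where one cannot divide by $|G|$. One cosmetic correction: with the paper's conventions the assignment $g\mapsto a_g$ is an anti-homomorphism, $a_g\circ a_h=a_{hg}$ (combine naturality of $\tau$, the cocycle \eqref{compatibility} for $\phi$, and the $G$-functor condition for $(h,\tau)$), not a homomorphism; this changes nothing downstream since the averaged sum is still idempotent and natural.
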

Similar results hold for dg-categories (or certain stable $\infty$-categories)
and will play a role in determining the Hochschild cohomology of the equivariant category,
see Section~\ref{sec:cohomology} below and \cite{Perry}.
\begin{proof}
We assume that the action is strict.
Since $q(gE) = qE$ for all objects $E$ in $\CD$,
the linearization functor $q \colon \CD \to \CD_G$ defines a $G$-functor $(q, \id) \colon \CD \to \iota(\CD_G)$.
We define a functor
\begin{equation} \Hom_{\Cats}{(\CD_G, \CA)} \to \Hom_{\GCats}{(\CD, \iota(\CA))} \label{func a} \end{equation}
by pre-composing with $q$, i.e.\ 
by sending a functor $F \colon \CD_G \to \CA$ to $(Fq, F \id)$, and a natural transformation
$t$ of such functors to $tq$.

Conversely, let $(f,\sigma) \colon \CD \to \iota(\CA)$ be a $G$-functor.
One checks immediately (or see e.g.\ \cite[Lem.\ 3.5]{Shinder}) that
$(f,\sigma)$ naturally lifts to a functor
$\tilde{f} \colon \CD_G \to \CA_G$
such that as a composition of $G$-functors we have
\begin{equation}
\label{fqqf}
(\tilde{f}, \id) \circ (q,\id)
= (q,\id) \circ (f,\sigma).
\end{equation}
Since $G$ acts trivially on $\CA$, 
objects in $\CA_G$ are pairs $(E,\phi)$ where $E$ is an object in $\CA$
endowed with a $G$-action
given by the linearization $\phi_g \colon E \to E$.
Since $\CA$ is abelian and hence has finite limits, 
we have a functor
\[ (- )^G \colon \CA_G \to \CA \]
which associates to $(E,\phi)$ its $G$-invariants $E^G$. It satisfies $( - )^G \circ q = \id_{\CA}$.
We set
\begin{equation} F = (-)^G \circ \tilde{f} \colon \CD_G \to \CA. \label{FFdef} \end{equation}

Similarly, any natural transformation $t \colon f \to f'$ lifts to a natural transformation
$\tilde{t} \colon \tilde{f} \to \tilde{f'}$. 
Taking $G$-invariants we obtain a natural transformation
$(-)^G \tilde{t} \colon F \to F'$.
This yields a functor
\begin{equation} \Hom_{\GCats}{(\CD, \iota(\CA))} \to \Hom_{\Cats}{(\CD_G, \CA)}. \label{func b} \end{equation}

We need to show that \eqref{func a} and \eqref{func b} are quasi-inverse to each other
when restricted to the subcategory of left-exact functors.
Consider a $G$-functor $(f,\sigma) \in \Hom_{\GCats, \textup{l.e.}}{(\CD, \iota(\CA))}$
and let $F$ be defines as in \eqref{FFdef}. Then
$F$ is a left-exact additive functor and we have the composition of $G$-functors
\begin{align*}
(F, \id) \circ (q,\id) & = ( (-)^G, \id) \circ (\tilde{f}, \id) \circ (q,\id) \\
& \overset{\eqref{fqqf}}{=} ( (-)^G, \id) \circ (q,\id) \circ (f,\sigma) \\
& = (f,\sigma).
\end{align*}

Conversely, given $F \in \Hom_{\Cats, \textup{l.e.}}{(\CD_G, \CA)}$, define the $G$-functor
\[ (f,\sigma) = (F,\id) \circ (q,\id)\colon \CD \to \iota(\CA)  \]
which is left-exact and additive,
and consider its lift $\tilde{f} \colon \CD_G \to \CA_G$. We need to show that
\[ F \cong (-)^G \circ \tilde{f}. \]

Given $(B,\phi) \in \CD_G$, consider the object $qB = qp(B,\phi)= (\oplus_{g} gB, \phi_{\mathrm{can}})$.
The linearization $\phi_h \colon B \to hB$ yields the
morphism $q\phi_h \colon qB \to qB$ in $\CD_G$ given by
\begin{equation}\label{dddd}
\left( \bigoplus_{g \in G} gB, \phi_{\mathrm{can}} \right) \xrightarrow{ \left( g \phi_h \right)_{g \in G}} 
\left( \bigoplus_{g \in G} ghB, \phi_{\mathrm{can}} \right) 
= \left(\bigoplus_{g \in G} gB, \phi_{\mathrm{can}} \right).
\end{equation}
This defines a $G$-action on the object $qB$, i.e.\ a homomorphism $G \to \Hom_{\CD_G}(qB, qB)$.
The morphism
\[ \bigoplus_{g \in G} \phi_g \colon B \to \bigoplus_{g \in G} gB \]
is $G$-invariant with respect to the linearizations $\phi$ and $\phi_{\mathrm{can}}$,
and defines an isomorphism from $(B,\phi)$ with the $G$-invariants $(qB)^G$.

By definition the element $\tilde{f} (B,\phi)$ is the pair $(F qB, Fq\phi_h)$
where the linearization $Fq\phi_h$ is obtained as the composition
\[ FqB \xrightarrow{Fq \phi_h} Fqh B = FqB. \]
Hence it is precisely $F$ applied to the morphism \eqref{dddd}.
Since $F$ is left exact and hence commutes with finite limits,
we find
\[ \left( \tilde{f} (B,\phi) \right)^G = 
\left( F(qB, q\phi_h) \right)^G = 
F  \left( \left( qB, q\phi_h \right)^G \right) \cong F (B,\phi). \] 
The final step (which is left to the reader) is to show that \eqref{func a} and \eqref{func b} are inverse to each other
on natural transformations.
\end{proof}

\subsection{Taking equivariant categories successively}
The following result shows that when determining equivariant categories
it is sufficient to consider simple groups $G$.
We also refer to \cite[Rem.\ 2.4]{R} for the parallel statement
for stacks.

\begin{prop} \label{rem:successive_quotients}
If $H \subset G$ is a normal subgroup, then there exists an induced action of $G$ on $\CD_H$
such that $(p,\id) \colon \CD_H \to \CD$ is a $G$-functor.
The action on $\CD_H$ is isomorphic to an action which factors through $G/H$ and we have $\CD_G \cong (\CD_H)_{G/H}$.
\end{prop}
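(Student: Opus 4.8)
The plan is to construct the $G$-action on $\CD_H$ explicitly, verify it factors through $G/H$ up to isomorphism of actions, and then identify $(\CD_H)_{G/H}$ with $\CD_G$ via the universal property. I would work throughout with a strict $H$-action on $\CD$, which we may assume by the discussion following the definition of actions. Since $H$ is normal, for each $g \in G$ the auto-equivalence $\rho_g$ conjugates the $H$-action into an equivalent one: sending an equivariant object $(E,\phi) \in \CD_H$ to the object $\rho_g E$ equipped with the linearization $\rho_g E \xrightarrow{\rho_g \phi_{g^{-1}hg}} \rho_g \rho_{g^{-1}hg} E \cong \rho_h \rho_g E$ (using the $\theta$'s to rebracket) defines an auto-equivalence $\tilde{\rho}_g$ of $\CD_H$. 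The coherence isomorphisms $\tilde\theta_{g,g'}$ are induced from those of the original $G$-action; the associativity pentagon for $\tilde\theta$ reduces to that for $\theta$ by naturality and the Lemma proved above on interchange of horizontal and vertical composition. By construction the forgetful functor intertwines $\tilde\rho_g$ with $\rho_g$, giving $(p,\id)$ the structure of a $G$-functor.

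Next I would show this action factors through $G/H$. For $h_0 \in H$ the auto-equivalence $\tilde\rho_{h_0}$ of $\CD_H$ is canonically isomorphic to the identity: an equivariant object $(E,\phi)$ already carries the isomorphism $\phi_{h_0} \colon E \to \rho_{h_0} E$, and one checks this assembles into a $2$-isomorphism $\tilde\rho_{h_0} \cong \id_{\CD_H}$ compatible with the coherence data. More precisely, one produces $2$-isomorphisms $\tilde\rho_g \cong \tilde\rho_{g'}$ whenever $gH = g'H$, natural in the sense required, so that after choosing coset representatives the action is isomorphic (in the sense of the theorem's notion of isomorphic actions) to one pulled back along $G \twoheadrightarrow G/H$. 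This gives a genuine $G/H$-action on $\CD_H$.

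Finally, to prove $\CD_G \cong (\CD_H)_{G/H}$ I would invoke the universal property of Proposition~\ref{prop:UP1} rather than unwinding both sides by hand. The composite forgetful functor $(\CD_H)_{G/H} \to \CD_H \to \CD$ is, by the two $G$-functor structures just built, a $G$-functor out of the trivially-$G$-equivariant category $\iota((\CD_H)_{G/H})$; hence by the universal property it factors uniquely through a functor $\Phi \colon (\CD_H)_{G/H} \to \CD_G$. Conversely, restricting the linearization of an object of $\CD_G$ to $H$ lands in $\CD_H$, and the residual $G/H$-linearization data makes this into a functor $\Psi \colon \CD_G \to (\CD_H)_{G/H}$; concretely an object of $\CD_G$ is an object $E$ with a full $G$-linearization, which is precisely an $H$-linearization on $E$ together with a compatible $G/H$-linearization on the resulting $\CD_H$-object. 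I would then check $\Phi$ and $\Psi$ are mutually quasi-inverse by comparing objects and morphism spaces: on morphisms this amounts to the identity $\Hom_\CD(E,E')^G = (\Hom_\CD(E,E')^H)^{G/H}$ for the relevant actions, which is immediate.

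The main obstacle I expect is bookkeeping the coherence isomorphisms in the construction of $\tilde\rho_g$ and $\tilde\theta_{g,g'}$: verifying that the rebracketing built from the $\theta$'s satisfies the compatibility square \eqref{compatibility} for the conjugated linearizations, and that the associativity diagram \eqref{associativity} for $\tilde\theta$ commutes, both require careful diagram chases that use normality of $H$ at each step. The comparison with stacks in \cite{R} and the parallel treatment in \cite{Elagin} suggest the cleanest route is to keep the action strict and reduce every verification to the interchange Lemma stated above, so that no genuinely new coherence computation is needed beyond those already appearing in Theorem~\ref{lem:obstruction to action}.
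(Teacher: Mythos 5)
Your proposal follows the paper's proof essentially step for step: the conjugated action $\bar{\rho}_g(A,\phi) = (\rho_g A,\, \rho_g\phi_{g^{-1}hg})$ making $(p,\id)$ a $G$-functor, the passage to an isomorphic action via coset representatives (with $\tilde{\rho}_{h_0}$ trivialized for $h_0 \in H$ by the linearizations $\phi_{h_0}$, exactly the paper's $\theta_{g,g'} = \rho_{g_k}\phi_h^{-1}$), and the final identification anchored in Proposition~\ref{prop:UP1}. The only divergence is in packaging the last step: the paper verifies the adjunction $\Hom_{G/H\textup{-}\mathfrak{Cats}}(\iota_{G/H}(\CA),\CD_H) \cong \Hom_{\GCats}(\iota(\CA),\CD)$ for arbitrary test categories $\CA$ and concludes by the 2-categorical Yoneda lemma, whereas you instantiate the universal property once to obtain $\Phi \colon (\CD_H)_{G/H} \to \CD_G$ and exhibit an explicit quasi-inverse $\Psi$ by unwinding a $G$-linearization into an $H$-linearization plus residual $G/H$-data --- the same verifications, arranged differently.
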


\begin{proof}
We assume that the action is strict.
For every $g \in G$ define a functor 
$\bar{\rho}_g \colon \CD_H \to \CD_H$
by letting it act on morphisms by $\rho_g$ and on objects by
\[ \bar{\rho}_g \colon (A,\phi) \mapsto (\rho_g A, \phi')\ \text{ with }\ \phi'_h = \rho_g \phi_{g^{-1} h g}. \]
One checks that the assignment $g \mapsto \bar{\rho}_g$ defines a strict $G$-action on $\CD_H$
for which $(p,\id)$ is a $G$-functor.

For the second part we define a new action $\tilde{\rho}_g$.
Choose representatives $g_1, \dots, g_n \in G$ for the elements in $G/H$, where we take the identity element for the coset of the identity.
Given any element $g \in g_i H$ we set
\[ \tilde{\rho}_g=\bar{\rho}_{g_i}. \]
For any two elements $g \in g_i H$ and $g' \in g_j H$ write $g_i g_j = g_k h$ (here $g_k$ and $h$ only depend on $g_i, g_j$).
Then define
\[ \theta_{g,g'} \colon \tilde{\rho}_{g} \tilde{\rho}_{g'} = \bar{\rho}_{g_i} \bar{\rho}_{g_j} = \bar{\rho}_{g_k h} \to \bar{\rho}_{g_k} = \tilde{\rho}_{gg'} \]
by associating to $(A,\phi) \in \CD_H$ the morphism
$\bar{\rho}_{g_k h} (A,\phi) \xrightarrow{ \rho_{g_k} \phi_h^{-1} } \bar{\rho}_{g_k} (A,\phi)$.
The action of $(\tilde{\rho}, \theta)$ on $\CD_H$ is isomorphic to $(\bar{\rho},\id)$
and factors through a $G/H$-action.

The adjunction
\[
\Hom_{G/H\textup{-}\mathfrak{Cats}}( \iota_{G/H}(\CA), \CD_H ) \xrightarrow{\cong} \Hom_{\GCats}(\iota(\CA), \CD)
\]
follows by a direct check.
Using Proposition~\ref{prop:UP1} and the 2-categorical Yoneda lemma
yields the isomorphism $\CD_G \cong (\CD_H)_{G/H}$.
\end{proof}

\subsection{Action of the dual group}
The group of characters of $G$,
\[ G^{\vee} = \{ \chi \colon G \to \BC^{\ast} \mid \chi \text{ homomorphism} \}, \]
acts on the equivariant category $\CD_G$ via the identity on morphisms and by
\[ \chi \cdot (E,\phi) = (E, \chi \phi) \]
on objects, where we let $\chi \phi$ denote the linearization $(\chi\phi)_g=\chi(g) \phi_g \colon E \to \rho_{g} E$. The cocycle condition for $\chi \phi$ is satisfied because for any $g,h \in G$ one has
\[ \chi(gh) = \chi(g) \chi(h). \]

We discuss a typical example arising in geometry.

\begin{example} \label{ex:dual action}
Let $G$ be a finite group acting on an complex quasi-projective algebraic variety $X$.
The $G$-action induces an action on the category of
$\Coh(X)$ of coherent sheaves on $X$
by sending a sheaf $E$ to its pushforward $g_{\ast}E$ under the automorphism $g \colon X \to X$.
If $G$ acts freely, then we have the following well-known equivalence
of the equivariant category
with the category of coherent sheaves on the quotient variety $X/G$:\footnote{If the $G$-action
is not free, then parallel statements apply to the stack quotient $[X/G]$.}
\begin{equation} \label{equiv quotient} \Coh(X/G) \xrightarrow{\cong} \Coh(X)_G. \end{equation}
The equivalence is given by pullback of sheaves along the quotient map $\pi \colon X \to X/G$.
Conversely the linearization $\phi$ of some $(E,\phi)$ is the descent datum of the sheaf $E$ with respect to $\pi$.
Under the equivalence the structure sheaf of $\CO_{X/G}$
corresponds to the equivariant sheaf $(\CO_X,1)$, where we write
$1$ for the canonical linearization
\[ \CO_X \to g_{\ast} \CO_X \]
given by pullback of functions along $g$.
For every character $\chi \in G^{\vee}$ consider the line bundle
\[ \CL_{\chi} \in \Pic(X/G) \]
which corresponds to the twisted linearization $(\CO_X, \chi)$.
Then tensoring with $\CL_{\chi}$ on $\Coh(X/G)$
corresponds under the equivalence \eqref{equiv quotient} to the dual action of $\chi \in G^{\vee}$ on the equivariant category.
In particular, the line bundles $\CL_{\chi}$ are all torsion. \qed
\end{example}

If the group $G$ is abelian, then $G^{\vee}$ is called the \textit{dual group} and is non-canonically isomorphic to $G$.
In this case we have the following:
\begin{prop}[Reversion, {\cite[Thm.\ 1.3]{Elagin}}] \label{Reciprocity}
Let $\CD$ be an idempotent complete additive category over $\BC$ with an action of a finite abelian group $G$.
Then there is an equivalence
$(\CD_G)_{G^{\vee}} \cong \CD$.
\end{prop}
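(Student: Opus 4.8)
The plan is to exhibit an explicit quasi-inverse pair of functors $\Phi\colon\CD\to(\CD_G)_{G^{\vee}}$ and $\Theta\colon(\CD_G)_{G^{\vee}}\to\CD$. By \cite[Thm.\ 5.4]{Shinder} we may assume the $G$-action on $\CD$ is strict, so $\rho_1=\id$; then the $G^{\vee}$-action on $\CD_G$ is itself strict, since $\chi\cdot\chi'\cdot(E,\phi)=(E,\chi\chi'\phi)=(\chi\chi')\cdot(E,\phi)$ literally. Consequently a $G^{\vee}$-linearization of $(E,\phi)$ is a family of $\CD_G$-isomorphisms $\Psi_{\chi}\colon (E,\phi)\to(E,\chi\phi)$ satisfying $\Psi_{\chi\chi'}=\Psi_{\chi'}\circ\Psi_{\chi}$.

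First I would construct $\Phi$. On objects, send $A$ to the linearized object $qA=(\bigoplus_{g}\rho_g A,\phi_{\mathrm{can}})$ of \eqref{linearlization_functor}, equipped with the \emph{diagonal} $G^{\vee}$-linearization $\Psi_{\chi}$ acting on the summand $\rho_g A$ by the scalar $\chi(g)^{-1}$. That $\Psi_{\chi}$ is a morphism $qA\to\chi\cdot qA$ in $\CD_G$ follows because the two scalings produced by the two ways around the linearization square both equal $\chi(h)\chi(g)^{-1}$ on the summand indexed by $g$; the cocycle identity $\Psi_{\chi\chi'}=\Psi_{\chi'}\Psi_{\chi}$ is immediate from multiplicativity of characters. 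On morphisms $\Phi=q$, which is $G^{\vee}$-equivariant since $q(f)$ is block diagonal and $\Psi$ is diagonal. For the inverse I would use idempotent completeness: given $X=((E,\phi),\Psi)$, the relation $\Psi_{\chi\chi'}=\Psi_{\chi'}\Psi_{\chi}$ shows that for each $a\in G=G^{\vee\vee}$ the endomorphism $e_a=\tfrac{1}{|G|}\sum_{\chi\in G^{\vee}}\chi(a)^{-1}\Psi_{\chi}\in\End_{\CD}(E)$ is idempotent, that the $e_a$ are orthogonal with $\sum_a e_a=\id$, and that $\Psi_{\chi}$ acts on $E_a:=\mathrm{im}(e_a)$ by $\chi(a)$. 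I set $\Theta(X)=E_1=\mathrm{im}(e_1)$, the $G^{\vee}$-invariants; since any morphism in $(\CD_G)_{G^{\vee}}$ commutes with the $\Psi$'s, hence with $e_1$, it restricts to the images, making $\Theta$ a functor.

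That $\Theta\Phi\cong\id_{\CD}$ is a one-line character-orthogonality computation: for $\Phi(A)=qA$ the idempotent $e_1$ acts on the summand $\rho_g A$ by $\tfrac{1}{|G|}\sum_{\chi}\chi(g)^{-1}=\delta_{g,1}$, so its image is the summand $\rho_1 A=A$. For the essential surjectivity, i.e.\ $\Phi\Theta\cong\id$, I would feed the constraint that each $\Psi_{\chi}$ is a $\CD_G$-morphism, namely $\chi(g)\phi_g\circ\Psi_{\chi}=\rho_g(\Psi_{\chi})\circ\phi_g$, into the definition of $e_a$ to obtain the key intertwining relation $\rho_g(e_a)\circ\phi_g=\phi_g\circ e_{g^{-1}a}$. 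This exhibits $\phi_g$ as an isomorphism $E_{g^{-1}a}\xrightarrow{\sim}\rho_g(E_a)$; taking $a=1$ yields isomorphisms $\phi_{a^{-1}}\colon E_a\xrightarrow{\sim}\rho_{a^{-1}}(E_1)$ for every $a$, which assemble into an isomorphism $\Lambda\colon E\xrightarrow{\sim}\bigoplus_a\rho_{a^{-1}}(E_1)=q(E_1)$ of underlying objects.

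The remaining, and main, obstacle is to check that $\Lambda$ is an isomorphism in $(\CD_G)_{G^{\vee}}$, i.e.\ compatible with \emph{both} linearizations; this is where abelianness of $G$ enters. Compatibility with the $G^{\vee}$-linearization is automatic: $\Psi_{\chi}$ acts by $\chi(a)$ on $E_a$ and by $\chi(a^{-1})^{-1}=\chi(a)$ on the matching summand $\rho_{a^{-1}}(E_1)$ of $q(E_1)$. Compatibility with the $G$-linearization reduces, after unwinding the $\theta$'s hidden in $\phi_{\mathrm{can}}$, to the identity $\rho_g(\phi_{g^{-1}a^{-1}})\circ\phi_g=\theta_{g,g^{-1}a^{-1}}^{-1}\circ\phi_{a^{-1}}$, which is precisely the linearization cocycle \eqref{compatibility} for $(E,\phi)$ once one uses $a^{-1}g^{-1}=g^{-1}a^{-1}$ to match source and target summands; this index commutativity is the sole point where $G$ abelian is needed. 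Naturality of $\Lambda$ in $X$ is then routine, so $\Phi$ and $\Theta$ are mutually quasi-inverse and $(\CD_G)_{G^{\vee}}\cong\CD$.
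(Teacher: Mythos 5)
Your proposal cannot be matched against ``the paper's own proof'' because the paper has none: Proposition~\ref{Reciprocity} is quoted from \cite[Thm.\ 1.3]{Elagin} and then simply applied, so what you have written is a self-contained substitute for that citation. It is correct, and it is essentially the natural direct argument (close in spirit to Elagin's original one): strictify, send $A$ to $qA$ with the diagonal character twist $\chi(g)^{-1}$ on the summand $\rho_g A$, and build the quasi-inverse by splitting the orthogonal idempotents $e_a=\tfrac{1}{|G|}\sum_{\chi}\chi(a)^{-1}\Psi_{\chi}$, which exist exactly because $\CD$ is idempotent complete. I checked the key computations: both scalings in the square making $\Psi_{\chi}$ a $\CD_G$-morphism equal $\chi(h)\chi(g)^{-1}$; the intertwining relation $\rho_g(e_a)\circ\phi_g=\phi_g\circ e_{g^{-1}a}$ follows from $\chi(a)^{-1}\chi(g)=\chi(g^{-1}a)^{-1}$; and the compatibility of $\Lambda$ with the canonical linearization of $q(E_1)$ is precisely the cocycle \eqref{compatibility} applied with $h=g^{-1}a^{-1}$, once the target summands indexed by $a^{-1}g^{-1}$ and $g^{-1}a^{-1}$ are identified. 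What your approach buys over the paper's is obvious: the paper outsources the statement, while your argument makes the whole of Section~3 self-contained, and moreover exhibits the equivalence explicitly (via $q$ and the invariants functor), which is what Example~\ref{ex:actionPic0} implicitly uses.

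Two caveats, neither fatal. First, your claim that commutativity of $G$ enters \emph{solely} at the index identification $a^{-1}g^{-1}=g^{-1}a^{-1}$ is inaccurate: abelianness is also what gives the biduality $G\cong G^{\vee\vee}$ indexing your idempotents, and the orthogonality $\tfrac{1}{|G|}\sum_{\chi\in G^{\vee}}\chi(g)^{-1}=\delta_{g,1}$ used for $\Theta\Phi\cong\id$ --- for non-abelian $G$ that sum only detects the commutator subgroup, the invariants of $qA$ would be $\bigoplus_{g\in[G,G]}\rho_g A$, and reversion genuinely fails. Since abelianness is a standing hypothesis this affects only the remark, not the proof. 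Second, the opening reduction deserves one more sentence: an equivalence $(\CD,\rho,\theta)\cong(\CD',\rho',\theta')$ in $\GCats$ induces an equivalence $\CD_G\cong\CD'_G$ which commutes on the nose with the two $G^{\vee}$-actions (since $G^{\vee}$ acts by rescaling linearizations and by the identity on morphisms), hence $(\CD_G)_{G^{\vee}}\cong(\CD'_G)_{G^{\vee}}$, and idempotent completeness is invariant under equivalence; with that said, the strictification step is legitimate and the rest of the argument is sound.
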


The proposition can be applied for example to the categories $\Coh(X)$ and $D^b(X)$, since they are both idempotent complete. 

\begin{example}
\label{ex:actionPic0}
If $X$ is an algebraic variety and $H \subset \Pic(X)$ a finite subgroup
which acts on $\Coh(X)$ by tensoring, then the equivariant category $\Coh(X)_H$
is equivalent to $\Coh(\tilde{X})$ where $\tilde{X}$ is the cover
\[ \tilde{X} = \Spec\left( \bigoplus_{\CL \in H} \CL^{-1} \right), \]
see \cite[Thm.\ 7.5]{Elagin} for a discussion.

In particular, if $G$ is taken to be abelian in Example~\ref{ex:dual action},
and we take the subgroup of line bundles $\CL_{\chi}$ for all $\chi \in G^{\vee}$,
then one recovers
\[ \Coh(X/G)_{G^{\vee}} \cong \Coh(X). \]
Hence this is a basic example of the reversion principle of Proposition~\ref{Reciprocity}. \qed
\end{example}

A \emph{$G$-linearization} of an object $E \in \CD$ is an object $\tilde{E} \in \CD_G$ such that $p \tilde{E} \cong E$.
We say that an object $E$ is \emph{$G$-linearizable} if it admits a $G$-linearization.
Equivalently, it lies in the essential image of the functor $p$. 

By work of Ploog,
the action of the dual group yields the following useful description of the set of $G$-linearizations of a simple object.

\begin{lemma}(\cite[Lem.\ 1]{Ploog}) \label{rmk:Ploog_simple}
Let a finite group $G$ act on a $\BC$-linear category $\CD$ and consider a simple object $E$, i.e.\ $\Hom(E,E)=\BC$. Then there exists a class in $H^2(G,\BC^\ast)$ which vanishes if and only if $E$ admits a $G$-linearization. 
Furthermore, the dual group $G^{\vee}$ acts freely and transitively on the set
of (isomorphism classes) of linearizations.
\end{lemma}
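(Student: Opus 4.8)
The plan is to prove the two assertions of Lemma~\ref{rmk:Ploog_simple} by leveraging the obstruction-theoretic framework already established in Theorem~\ref{lem:obstruction to action}, together with the explicit description of the dual action from the preceding discussion. The key observation is that a $G$-linearization of $E$ is, by definition, a family of isomorphisms $\phi = (\phi_g \colon E \to \rho_g E)_{g \in G}$ satisfying the cocycle condition~\eqref{compatibility}. First I would fix an arbitrary family of isomorphisms $\psi_g \colon E \to \rho_g E$ (these exist since each $\rho_g$ is an auto-equivalence sending the simple object $E$ to a simple object, and I would need to know $\rho_g E \cong E$; more carefully, I only need \emph{some} isomorphism, which requires that $g$ fixes the isomorphism class of $E$ --- but since we are asking whether \emph{any} linearization exists, the natural setting is that $\rho_g E \cong E$ for all $g$, as is forced by the existence of even a single $\phi_g$). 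I would then measure the failure of $\psi$ to satisfy~\eqref{compatibility}.

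Concretely, for each pair $g,h$ the two composites $E \to \rho_{gh} E$ given by $\psi_{gh}$ on the one hand and by $\theta_{g,h}^E \circ (\rho_g \psi_h) \circ \psi_g$ on the other differ by an automorphism of the simple object $\rho_{gh}E$, hence by a scalar $\lambda(g,h) \in \BC^{\ast}$, using $\Hom(E,E) = \BC$ and thus $\End(\rho_{gh}E)=\BC$. The heart of the argument is to check that the associativity constraint~\eqref{associativity} forces $\lambda \colon G^2 \to \BC^{\ast}$ to be a $2$-cocycle. This is a direct diagram chase entirely parallel to the cocycle computation in the proof of Theorem~\ref{lem:obstruction to action}(a), and I would not reproduce it in full. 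Replacing the choice of $\psi_g$ by $\mu(g)\psi_g$ for scalars $\mu(g) \in \BC^{\ast}$ changes $\lambda$ by the coboundary $(g,h) \mapsto \mu(g)\mu(h)\mu(gh)^{-1}$, so the class $[\lambda] \in H^2(G,\BC^{\ast})$ is well-defined independently of all choices. By construction $[\lambda] = 0$ if and only if the $\psi_g$ can be rescaled to satisfy~\eqref{compatibility} exactly, i.e.\ if and only if $E$ admits a $G$-linearization.

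For the freeness and transitivity of the $G^{\vee}$-action, I would argue as follows. If $\phi$ and $\phi'$ are two linearizations of $E$, then for each $g$ we have $\phi'_g = \nu(g)\phi_g$ for a unique scalar $\nu(g) \in \BC^{\ast}$, again by simplicity of $E$. Imposing the cocycle condition~\eqref{compatibility} on both $\phi$ and $\phi'$ and comparing shows that $\nu \colon G \to \BC^{\ast}$ is multiplicative, i.e.\ $\nu \in G^{\vee}$, and precisely $\phi' = \nu \cdot \phi$ in the notation of the dual action. This shows the action is transitive on the set of linearizations of the fixed object $E$; here I should be careful that I mean isomorphism classes fixing the underlying object $E$, and then note that a general isomorphism of equivariant objects over $E$ is an automorphism of $E$, hence a scalar, which does not alter $\nu$. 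Freeness follows because $\nu \cdot \phi \cong \phi$ as equivariant objects forces $\nu = 1$: an isomorphism $(E,\nu\phi) \to (E,\phi)$ is a scalar $c$ with $c\,\phi_g = \nu(g)\phi_g\, c$, giving $\nu(g)=1$ for all $g$.

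The main obstacle I anticipate is the bookkeeping in verifying the cocycle identity for $\lambda$, since it requires carefully tracking the associator $\theta$ through the pentagon-type diagram~\eqref{associativity}; this is where one could easily misplace an inverse. A secondary subtlety is making precise the phrase ``set of linearizations'': I must decide whether I am counting isomorphism classes of objects $\tilde{E} \in \CD_G$ with $p\tilde{E} \cong E$, or families $\phi$ up to the scalar automorphisms of $E$. The cleanest route is to pass to the strict setting (permissible by \cite[Thm.\ 5.4]{Shinder}, as noted in Section~\ref{sec:defn grp actions}), where $\theta_{g,h}=\id$, so that~\eqref{compatibility} reads $\phi_{gh} = (\rho_g\phi_h)\circ\phi_g$; this simplifies every computation above to the statement that linearizations are exactly twisted $1$-cocycles valued in $\Aut(E) = \BC^{\ast}$, with the obstruction living in $H^2$ and the torsor structure governed by $Z^1 = G^{\vee}$.
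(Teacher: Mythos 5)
Your proposal is correct, and it is essentially the argument of the cited source: the paper itself does not prove this lemma but refers to \cite[Lem.\ 1]{Ploog}, whose proof is exactly your construction (choose isomorphisms $\psi_g\colon E \to \rho_g E$, measure the failure of the cocycle condition by a scalar-valued $2$-cochain via $\End(E)=\BC$, check it is a cocycle whose class is independent of choices, and identify the set of linearizations as a torsor under $Z^1(G,\BC^\ast)=G^{\vee}$). Your computation is also precisely the object-level analogue of the paper's own cocycle argument in the proof of Theorem~\ref{lem:obstruction to action}(a), on which you correctly model it, and your handling of the two subtleties (the implicit hypothesis $\rho_g E \cong E$, and the identification of isomorphism classes of linearizations over a fixed simple $E$ with cocycle families) is sound.
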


\subsection{A subgroup of the group of auto-equivalences which does not act}
\label{subsec:does not act}
Let $\CD$ be a triangulated category and let
$\tau \colon \CD \to \CD$
be an auto-equivalence of order $4$ (so $\tau^k \not\cong \id$ unless $4|k$) which defines a strict $\BZ_4$-action on $\CD$.
Let $\langle \tau^2 \rangle \cong \BZ_2$ denote the subgroup generated by $\tau^2$ and let 
\[ \CD' = \CD_{\langle \tau^2 \rangle} \]
be the equivariant category.
As in Proposition~\ref{rem:successive_quotients}, $\tau$ induces an auto-equivalence $\bar{\tau} \colon \CD' \to \CD'$
together with a natural isomorphism $t\colon\id_{\CD'} \cong \bar{\tau}^2$.
Concretely, for an equivariant object $(A,\phi)$ we define
\[ t^{(A,\phi)} \colon (A,\phi) \xrightarrow{\phi_{\tau^2}} \bar{\tau}^2 (A,\phi) = (\tau^2 A, \tau^2 \phi). \]
We also have an equivalence $\chi \colon \CD' \to \CD'$ of order $2$ obtained by twisting with the non-trivial character $\chi$ of $\BZ_2$. The automorphisms $\bar{\tau}$ and $\chi$ commute canonically and hence the composition $\chi \circ \bar{\tau}$ is of order $2$ in $\Aut \CD'$. 
Suppose also that\footnote{This is automatically satisfied in many instances,
see Theorem~\ref{prop:faithful action2} and Lemma~\ref{lem:number components}.}
\[ \Hom(\id_{\CD'}, \id_{\CD'}) = \BC \id. \]

\vspace{4pt}
\noindent
\textbf{Claim.} The subgroup $\BZ_2 \subset \Aut \CD'$ generated by $g = \chi \circ \bar{\tau}$ does not act on $\CD'$.

\begin{proof}[Proof of Claim]
Since $\Hom(\id_{\CD'}, \id_{\CD'}) = \BC$, any isomorphism $\theta_{g,g} \colon g^2 \xrightarrow{\cong} \id$ is
a scalar multiple of $t^{-1}$. Hence it is enough to show that $gt \neq tg$.
For any $(A,\phi)$ the map $(gt)^{(A,\phi)}$ is obtained by applying $\chi \bar{\tau}$ to
$\phi_{\tau^2} \colon (A,\phi) \to \bar{\tau}^2 (A,\phi)$. Hence it is equal to 
$\tau \phi_{\tau^2}$ (twisting by $\chi$ acts by the identity on morphisms).
On the other hand we have
\[ (t g)^{(A,\phi)} = t^{g (A,\phi)} = t^{(\tau A, \chi(\tau^2) \tau \phi)}
=
\chi(\tau^2) \tau \phi_{\tau^2}
=
-\tau \phi_{\tau^2}. \qedhere \]
\end{proof}

To translate the above into a simple concrete case, let $\tau \colon \Coh(E) \to \Coh(E)$ be the translation by a $4$-torsion point on an elliptic curve $E$. The equivariant category $\CD_{\langle \tau^2 \rangle}$ is then equivalent to $\Coh(E')$ with $E' = E/\langle \tau^2 \rangle$ being the quotient.
The induced morphism $\bar{\tau}$ is equivalent to translation $t_a$ by a $2$-torsion point
and $\chi$ is equivalent to tensoring with a $2$-torsion line bundle $\CL_{b} = \CO_{E'}(b-0_{E'})$
corresponding to a $2$-torsion point $b$ which is distinct from $a$.
We find that
the involution $\CL_b \otimes t_a^{\ast}( - )$ does not define an action of $\BZ_2$ on $\Coh(E')$,
but only a $\BZ_4$-action.

\section{Decompositions of equivariant categories} \label{sec:decomp equiv}
In this section we assume $\CD$ to be a $\BC$-linear triangulated category.

The goal of this section is to understand the decomposition of the equivariant category into its \emph{components} (defined in Section~\ref{subsec:ortho decomp}).
We will say that an action of a finite group $G$ on a category $\CD$
is \emph{faithful} if the equivariant category $\CD_G$ can not be decomposed in a non-trivial way into components, or in other words if it is indecomposable. 
Our first aim is to describe the number of components of the equivariant category
in terms of cohomology.
This leads to a useful criterion for an action to be faithful.
Then we describe each component of the equivariant category
as the equivariant category of a faithful action.
The main tool we will use are stability conditions\footnote{Often an appropriate notion
of weak stability conditions would suffice for our applications.
For simplicity we work with stability conditions throughout.}.

\subsection{Orthogonal decompositions} \label{subsec:ortho decomp}
A triangulated category $\CD$ is the \emph{orthogonal direct sum} of $n$ full subcategories $\CD_i$
if every object $E \in \CD$ is isomorphic to a direct sum $\oplus_i E_i$ with $E_i \in \CD_i$ and there are no non-trivial morphisms between objects which lie in different subcategories.  
In this case we write $\CD = \oplus_i \CD_i$.
The category $\CD$ is \emph{indecomposable} if in any such decomposition all except one summand is trivial.
Given a finite decomposition 
\[ \CD=\bigoplus_i \CD_i, \]
where all $\CD_i$ are non-trivial and indecomposable,
the summands $\CD_i$ are unique up to permutation and called the components of $\CD$.

\begin{example}
If $\CD$ is the derived category $D^b(X)$ of a smooth projective (not necessarily connected) variety $X$, then we have the orthogonal decomposition
\[ D^b(X) = \bigoplus_{i} D^b(X_i) \]
where $X_i$ are the connected components of $X$.
Hence $X$ is connected if and only if $D^b(X)$ is indecomposable. \qed
\end{example}

\subsection{Triangulated categories}
Let $G$ be a finite group acting on a $\BC$-linear triangulated category $\CD$.
We define a shift functor $[1] \colon \CD_G \to \CD_G$ by
\[ (E,\phi)[1]=(E[1], \phi[1]) \]
and we say a triangle in $\CD_G$ is distinguished
if and only it it is distinguished after applying
the forgetful functor $p$.
By a result of Elagin \cite[Thm.\ 6.10]{Elagin},
if $\CD$ admits a dg-enhancement\footnote{We refer
to \cite{CS} for references on dg-enhancements.}, then these
definitions make $\CD_G$ a triangulated category.

The existence of a dg-enhancement is a technical condition \cite{CS}
but known for the case we are most interested in.
Indeed, 
by a result of Lunts and Orlov \cite{LO}
the bounded derived category $D^b ( \Coh(X) )$
of coherent sheaves on a smooth projective variety $X$
has up to equivalence a unique dg-enhancement.

We can also ask for the stronger condition
that the $G$-action on $\CD$
lifts to a $G$-action on a dg-enhancement of $\CD$.
This is satisfied for example
if $G$ preserves a full abelian subcategory $\CA \subset \CD$
such that $D^b(\CA) \cong \CD$,
see \cite{CS} and \cite[Sec.\ 2.1]{BeckmannOberdieckModuli}.
This condition is useful since it will allow us to use methods from
Hochschild cohomology.

From now on we will always assume that $\CD_G$ is triangulated. 

\subsection{$\rho$-trivial actions}
\label{exmp:trivial action}
Let $\CD$ be a triangulated category with $\Hom(\id_{\CD}, \id_{\CD}) = \BC$.
Let $(\rho, \theta)$ be the action of a finite group $G$ on $\CD$
such that 
\[ \rho_g = \id \text{  for all  }g \in G, \]
but with $\theta$ arbitrary.
By Theorem~\ref{lem:obstruction to action} there is an associated cocycle
\[ \alpha \in H^2(G, \BC^{\ast}) \]
where the trivial action corresponds to the trivial class.

Let $f_i \colon G \to \mathrm{GL}(V_i)$
be the projective irreducible representations of $G$ of class $\alpha$,
see \cite{Cheng} for an introduction to the theory of these representations.

\begin{lemma} \label{lem:decomposition trivial action}
If $\CD_G$ is also triangulated, then we have the orthogonal decomposition
\[ \CD_G = \bigoplus_{i} \CD \otimes V_i \]
where each $\CD\otimes V_i$ is the full subcategory of pairs 
 $(E \otimes V_i, \phi)$ with $E \in \CD$ and
the linearization is defined by $\phi_h = \id_E \otimes f_i(h) \colon E \otimes V_i \to E \otimes V_i$.
\end{lemma}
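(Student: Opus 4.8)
The plan is to analyze the structure of an equivariant object $(E',\phi)$ when all $\rho_g = \id$, so that the linearization is simply a family of isomorphisms $\phi_g \colon E' \to E'$ subject to the twisted cocycle condition $\phi_{gh} = \theta_{g,h} \circ \phi_g \circ \phi_h$ (using $\rho_g = \id$). This says precisely that $g \mapsto \phi_g$ is a \emph{projective representation} of $G$ on the object $E'$ with cocycle $\alpha \in H^2(G,\BC^{\ast})$, where the twisting enters through $\theta_{g,h}$. Thus the data of a $G$-equivariant object is an object $E'$ of $\CD$ together with an action of the twisted group algebra $\BC_\alpha[G]$ on $E'$ by automorphisms. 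The key input from representation theory is that $\BC_\alpha[G]$ is a finite-dimensional semisimple $\BC$-algebra whose simple modules are exactly the projective irreducible representations $f_i \colon G \to \mathrm{GL}(V_i)$ of class $\alpha$, and that $\BC_\alpha[G] \cong \bigoplus_i \End(V_i)$ by Wedderburn.

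First I would define, for each $i$, the full subcategory $\CD \otimes V_i \subset \CD_G$ consisting of objects of the form $(E \otimes V_i, \id_E \otimes f_i)$ described in the statement, where $E \otimes V_i$ denotes the direct sum of $\dim V_i$ copies of $E$ (this makes sense since $\CD$ is additive) and one checks the twisted cocycle condition holds because $f_i$ is a projective representation of class $\alpha$. Next I would verify orthogonality: a morphism $(E \otimes V_i, \id \otimes f_i) \to (F \otimes V_j, \id \otimes f_j)$ in $\CD_G$ is a morphism $E \otimes V_i \to F \otimes V_j$ in $\CD$ intertwining the two projective representations, i.e.\ an element of $\Hom_{\CD}(E,F) \otimes \Hom_{\BC_\alpha[G]}(V_i, V_j)$. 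Since $V_i, V_j$ are non-isomorphic simple $\BC_\alpha[G]$-modules for $i \ne j$, Schur's lemma gives $\Hom_{\BC_\alpha[G]}(V_i,V_j) = 0$, so there are no morphisms between distinct summands.

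The main step, which I expect to be the principal obstacle, is to show that every equivariant object $(E',\phi)$ decomposes as a direct sum of objects from the various $\CD \otimes V_i$. The idea is to use the Wedderburn decomposition $\BC_\alpha[G] \cong \bigoplus_i \End(V_i)$ and its central idempotents $e_i$. The projective representation $\phi$ makes $E'$ into a module over $\BC_\alpha[G]$ in the category $\CD$, and applying the central idempotents $e_i$ (which are $\BC$-linear combinations of the $\phi_g$, hence genuine endomorphisms of $E'$ in $\CD$) yields orthogonal idempotent endomorphisms summing to $\id_{E'}$. Since $\CD$ is triangulated, and we are assuming $\CD_G$ is triangulated, these idempotents split; I would invoke idempotent completeness (which holds for triangulated categories admitting the relevant enhancement, and in any case follows from $\CD_G$ being triangulated together with the Bökstedt--Neeman argument, or one argues directly in $\CD_G$ where the $e_i$ are $G$-invariant endomorphisms). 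This gives $E' \cong \bigoplus_i E'_i$ with $E'_i = e_i E'$, and on each summand the $\BC_\alpha[G]$-action is $V_i$-isotypic, so $E'_i \cong E_i \otimes V_i$ as a projective representation for a unique $E_i \in \CD$, exhibiting $(E'_i, \phi|_{E'_i}) \in \CD \otimes V_i$.

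To finish I would assemble these three ingredients: the subcategories $\CD \otimes V_i$ are full subcategories of $\CD_G$, they are pairwise orthogonal by Schur's lemma, and every object decomposes into pieces lying in them via the central idempotent splitting. Together these give the orthogonal direct sum decomposition $\CD_G = \bigoplus_i \CD \otimes V_i$ claimed in the lemma. The careful point to watch is ensuring the idempotent splitting takes place inside $\CD_G$ (not merely in $\CD$), which is why the hypothesis that $\CD_G$ is triangulated is used; the endomorphisms $e_i$ are automatically $G$-invariant because they are central in $\BC_\alpha[G]$, hence define genuine idempotents in $\End_{\CD_G}(E',\phi)$ that split in the idempotent-complete triangulated category $\CD_G$.
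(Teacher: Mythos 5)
Your route is in substance the same as the paper's: the paper's proof is just a citation of \cite[Prop.\ 3.3]{KP} for the untwisted case together with the theory of projective representations from \cite{Cheng} for general $\theta$, and the argument you spell out --- equivariant objects as modules over the twisted group algebra $\BC_{\alpha}[G]$, Schur's lemma giving orthogonality of the subcategories $\CD \otimes V_i$, and the Wedderburn central idempotents producing the decomposition of an arbitrary object --- is exactly the argument behind those citations. The fullness and orthogonality steps of your proposal are correct as written.

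The genuine gap is your justification of the idempotent splitting, which you rightly identify as the crucial point but then support with reasons that are all invalid. A triangulated category need not be idempotent complete; the B\"okstedt--Neeman theorem applies only to triangulated categories admitting countable coproducts, which neither $\CD$ nor $\CD_G$ is assumed to have; admitting a dg-enhancement does not force Karoubian-ness either; and the $G$-invariance of the $e_i$ merely places them in $\End_{\CD_G}((E',\phi))$, it does not split them. The issue is not cosmetic: let $\CD \subset D^b(\BP^1)$ be the full triangulated subcategory of objects of even rank (rank is additive on triangles, so this is triangulated; it is enhanced, so $\CD_{\BZ_2}$ for the trivial action is again triangulated; and one checks $\Hom(\id_{\CD},\id_{\CD})=\BC$, so the standing hypotheses of the section hold). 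Then $\CO^{\oplus 2}$ with the swap involution is an object of $\CD_{\BZ_2}$, but its two eigen-objects would have to be isomorphic to $\CO$, which does not lie in $\CD$; hence no decomposition of the asserted form exists and the lemma fails for this $\CD$. So the argument genuinely requires $\CD$ to be idempotent complete (Karoubian), an assumption the paper makes explicitly elsewhere (Proposition~\ref{Reciprocity}) and which is harmless in every application of the lemma in the paper: there $\CD$ carries a stability condition, hence a bounded t-structure, and a triangulated category with a bounded t-structure is Karoubian by a theorem of Le and Chen. Karoubian-ness also passes from $\CD$ to $\CD_G$ (split an invariant idempotent in $\CD$ and transport the linearization to the summands). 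Once this hypothesis is added, your proof is complete, including the final Morita-type step identifying each isotypic summand with $E_i \otimes V_i$, where $E_i$ is the image of a rank-one idempotent of $\End(V_i) \subset \BC_{\alpha}[G]$ acting on the summand --- again a split idempotent only because of Karoubian-ness.
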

\begin{proof}
The case of the trivial action ($\alpha = 0$) can be found in \cite[Prop.\ 3.3]{KP}.
The argument for the general case is completely parallel (note that for projective representations we also have that the regular (projective) representation $\BC[G]$ decomposes into $\oplus V_i^\vee \otimes V_i$, see \cite[Cor.\ 3.11]{Cheng}).
\end{proof}

\subsection{Stability conditions} \label{subsec:stab_conditions}
A (Bridgeland) stability condition on a triangulated category $\CD$ is a pair $(\CA, Z)$ consisting of
\begin{itemize}
\item the heart $\CA \subset \CD$ of a bounded $t$-structure on $\CD$ and
\item a stability function $Z\colon K(\CA) \to \BC$
\end{itemize}
satisfying 
the existence and uniqueness of Harder--Narasimhan filtrations, positivity and the support property,
see \cite{BridgelandStab}.

Given an equivalence $\Phi \colon \CD \to \CD'$
of triangulated categories
the image of $\sigma$ under $\Phi$ is defined by
\[ \Phi \sigma = (\Phi \CA, Z \circ \Phi_{\ast}^{-1}) \]
where $\Phi_{\ast} \colon K(\CD) \to K(\CD')$ is the induced map on $K$-groups.
If $\Phi \colon \CD \to \CD$ is an auto-equivalence, we say that 
$\Phi$ preserves (or fixes) $\sigma$ if $\Phi \sigma = \sigma$.

Let us assume that $\CD$ 
has an action of a finite group $G$ which fixes a stability condition $\sigma = (\CA,Z)$.
Let us also assume as usual that $\CD_G$ is triangulated.
Then an application of \cite[Thm.\ 2.14]{MMS} shows that the pair
\[ \sigma_G = (\CA_G, Z_G), \quad Z_G \coloneqq Z \circ p_{\ast} \colon K(\CA_G) \to \BC \]
defines a stability condition on $\CD_G$.

If an element $E \in \CA_G$ is destabilized by $F$, then $p(E)$ is destabilized by $p(F)$.
Similarly, if $p(E)$ is destabilized by $F' \in \CA$, then the image of the adjoint morphism 
$qF' \to E$
destabilizes $E$.
Hence an element in $(E,\phi) \in \CA_G$ is $\sigma_G$-semistable if and only if $E \in \CA$ is $\sigma$-semistable.
Being stable is more subtle:
Since there may be destabilizing objects in $\CA$ which do not lie in the image of $\CA_G$,
an equivariant object 
$(E,\phi) \in \CA_G$ can be $\sigma_G$-stable while $p(E)$ is not.
The precise relation is given by the following.
\begin{lemma} \label{Lemma_stability}
Let $(E,\phi) \in \CA_G$.
\begin{enumerate}
\item[(i)] If $E$ is $\sigma$-stable, then $(E,\phi)$ is $\sigma_G$-stable.
\item[(ii)] If $(E,\phi)$ is $\sigma_G$-stable, then 
\[ E=\bigoplus_{g\in G/H} \phi_g^{-1}(gF) \]
for some subgroup $H\subset G$ and $\sigma$-stable object $F$.
Hence $E$ is $\sigma$-polystable, and it is $\sigma$-stable if and only if it is simple.
\end{enumerate}
\end{lemma}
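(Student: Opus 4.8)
\textbf{Proof proposal for Lemma~\ref{Lemma_stability}.}

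The plan is to exploit the adjunction between the forgetful functor $p \colon \CD_G \to \CD$ and the linearization functor $q \colon \CD \to \CD_G$, together with the compatibility $Z_G = Z \circ p_{\ast}$, which ensures that $p$ and $q$ preserve phases. The central principle, already recorded in the discussion preceding the lemma, is that (semi)stability in $\CA_G$ can be read off from (semi)stability in $\CA$ via these two functors: a subobject of $(E,\phi)$ maps under $p$ to a subobject of $E$ of the same phase, and conversely a subobject $F' \hookrightarrow E$ in $\CA$ of the same phase as $E$ produces, via the adjoint morphism $qF' \to (E,\phi)$, a potentially destabilizing equivariant subobject.

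For part (i), suppose $E$ is $\sigma$-stable and suppose $(F,\psi) \hookrightarrow (E,\phi)$ is a subobject in $\CA_G$ of the same phase. Applying $p$, we obtain a subobject $F \hookrightarrow E$ in $\CA$ of the same phase, which by stability of $E$ forces $F \cong E$ or $F \cong 0$; first I would argue that $F \cong 0$ gives $(F,\psi) \cong 0$, and $F \cong E$ forces the inclusion to be an isomorphism because a monomorphism in $\CA_G$ that becomes an isomorphism after the faithful exact functor $p$ is already an isomorphism. Hence $(E,\phi)$ admits no proper subobjects of equal phase and is $\sigma_G$-stable.

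For part (ii), the strategy is to run the converse direction of the correspondence. Given $(E,\phi)$ that is $\sigma_G$-stable, I would first take a $\sigma$-stable subobject $F \hookrightarrow E$ of maximal phase among the Jordan--Hölder factors of $E$ (which all have the same phase, since $E$ is $\sigma$-semistable by the earlier discussion). The adjunction furnishes a nonzero map $qF = (\bigoplus_{g \in G} \rho_g F,\ \phi_{\mathrm{can}}) \to (E,\phi)$; because $(E,\phi)$ is $\sigma_G$-stable and $qF$ is $\sigma_G$-semistable of the same phase, this adjoint map must be surjective onto $(E,\phi)$. I would then analyze the images of the summands $\rho_g F$: the stabilizer $H \subset G$ of the isomorphism class of the image dictates which translates coincide, and after choosing coset representatives the map identifies $E$ with $\bigoplus_{g \in G/H} \rho_g F$, with the summands glued through the isomorphisms $\phi_g$. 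Rewriting $\rho_g F$ via $\phi_g^{-1}$ produces exactly the claimed decomposition $E = \bigoplus_{g \in G/H} \phi_g^{-1}(\rho_g F)$. Polystability is then immediate since each summand is $\sigma$-stable of the same phase, and $E$ is $\sigma$-stable precisely when there is a single summand, i.e.\ when $H = G$, which is equivalent to $E$ being simple.

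The main obstacle I expect is in part (ii): controlling precisely how the surjection from $qF$ organizes $E$ into a direct sum indexed by $G/H$, and verifying that the gluing isomorphisms are exactly the $\phi_g^{-1}$ so that the linearization $\phi$ is recovered correctly. This requires a careful bookkeeping argument, using simplicity of $F$ (so that $\Hom(\rho_g F, \rho_{g'} F)$ is either zero or one-dimensional) to show that distinct cosets give orthogonal summands and that the equivariant structure forces the stated indexing by $G/H$. The equivalence of $\sigma$-stability with simplicity of $E$ at the end is then a clean consequence, since a polystable object is stable if and only if it is indecomposable, and a direct sum of the $\sigma$-stable objects $\phi_g^{-1}(\rho_g F)$ is simple exactly when the index set is a single point.
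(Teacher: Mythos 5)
Your proposal is, at its core, the paper's own proof in different packaging. Part (i) is identical. In part (ii), the image of your adjoint map $qF \to (E,\phi)$ is precisely the equivariant subobject that the paper constructs by hand, namely $F' = \sum_{g \in G} \phi_g^{-1}(gF) \subset E$ together with the restricted linearization $\phi|_{F'}$; and your statement that a nonzero map from the semistable $qF$ to the stable $(E,\phi)$ of equal phase must be surjective is exactly the paper's concluding step ``stability of $(E,\phi)$ implies $E = F'$''. So the adjunction buys only a systematic way of producing $F'$; the mechanism is the same.

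The genuine discrepancy is your definition of $H$. You take the stabilizer of the \emph{isomorphism class} of the image, whereas the group indexing the decomposition must be the stabilizer of $F$ as a \emph{subobject}: the paper sets $H = \{h \in G : hF = \phi_h F \text{ as subobjects of } hE\}$, equivalently $\phi_h^{-1}(hF) = F$ inside $E$. These differ in general, since distinct translates $\phi_g^{-1}(gF) \subset E$ can be abstractly isomorphic. For instance, take $\CD = D^b(\mathrm{pt})$ with trivial $G$-action, so that $\CA_G$ is the category of representations of $G$; for $(E,\phi)$ an irreducible representation of dimension at least two, every translate of a line $F \subset E$ is isomorphic to $F$, your $H$ equals all of $G$, and your formula would degenerate to $E = F$, which is false. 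For the same reason, Schur's lemma applied to the abstract spaces $\Hom(\rho_g F, \rho_{g'} F)$ --- your proposed bookkeeping tool --- cannot separate ``equal as subobjects'' from ``isomorphic'', which is the distinction that matters. The paper's tool is instead a dichotomy at the level of subobjects: $gF$ and $\phi_g F$ are stable subobjects of $gE$ of the same phase, hence are either equal or intersect trivially; this is what makes the set of translates a $G$-set of the form $G/H$, shows that $\phi$ restricts to $F'$, and organizes the sum over $G/H$. Your instinct that this bookkeeping is the main obstacle is sound; note that even the dichotomy only yields \emph{pairwise} trivial intersections of the translates, so promoting the sum $\sum_{g \in G/H} \phi_g^{-1}(gF)$ to a direct sum is the one point that requires more care than either your sketch or, for that matter, the paper's own one-line assertion provides.
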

\begin{proof}
By applying the forgetful functor,
any destabilizing object of $(E,\phi)$ yields a destabilizing object of $E$.
This shows (i).

For the proof of (ii) compare also with \cite[Sec.\ 2.3]{Zowislok} and \cite[Lem.\ 5.9]{Nuer}. 

Assume $(E,\phi)$ is $\sigma_G$-stable and $E$ is strictly $\sigma$-semistable.
Take a $\sigma$-stable destabilizing subobject $F\subset E$. Consider the two subobjects $gF$ and $\phi_g F$ of $gE$. Since they are both stable of the same phase, either $gF=\phi_g F$ or $gF \cap \phi_g F=0$ as subobjects of $gE$.
Let $H \subset G$ denote the subgroup consisting of elements $h$ satisfying $hF=\phi_h F$.
%
Define the subobject 
\[
F'\coloneqq \sum_{g \in G} \phi_g^{-1}(gF) \subset E
\]
which by definition of $H$ is equal to $F'= \oplus_{g\in G/H} \phi_g^{-1}(gF)$. The linearization $\phi$ of $E$ restricts to give a linearization 
\[
\phi_g'=\phi_g|_{F'} \colon F' \rightarrow F'
\]
making $(F',\phi')$ a subobject of $(E,\phi)$ in $\CA_G$ of the same slope. Stability of $(E,\phi)$ implies $E=F'$ which finishes the proof. 
\end{proof}

\subsection{Number of components}
We write $\Hom(f,g)$ for the vector space of a natural transformations
$f \to g$ for functors $f,g \colon \CD \to \CD$.
The number of components
of a triangulated category can be described as follows.

\begin{lemma} \label{lem:number components}
Let $\CD$ be a triangulated category with a stability condition $\sigma$.
If $\Hom_{\CD}( \id_{\CD}, \id_{\CD})$ is finite-dimensional,
then $\CD$ has finitely many components, in which case the number of components of $\CD$ is equal to 
$\dim_\BC \Hom_{\CD}( \id_{\CD}, \id_{\CD})$.
\end{lemma}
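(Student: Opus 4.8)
The plan is to identify $\Hom_\CD(\id_\CD,\id_\CD)$ — the center of $\CD$ — with a product of one copy of $\BC$ per component, and I begin by recording the easy inequality together with the finiteness. If $\CD=\bigoplus_i \CD_i$ is any orthogonal decomposition, then since there are no morphisms between distinct summands every natural transformation preserves each $\CD_i$, whence
\[ \Hom_\CD(\id_\CD,\id_\CD)=\prod_i \Hom_{\CD_i}(\id_{\CD_i},\id_{\CD_i}). \]
In particular the projectors $e_i$, acting as $\id$ on $\CD_i$ and as $0$ on the remaining summands, are linearly independent idempotents, so $\dim_\BC\Hom_\CD(\id_\CD,\id_\CD)\ge \#\{i\}$ for every decomposition into nontrivial summands. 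Hence finite-dimensionality of the center forces only finitely many components, and it remains to prove $\dim_\BC\Hom_{\CD_i}(\id,\id)=1$ for each \emph{indecomposable} component; the stability condition $\sigma$ restricts to each $\CD_i$, since both the heart and the central charge split along the decomposition. So I may assume $\CD$ is indecomposable and must show $\Hom_\CD(\id_\CD,\id_\CD)=\BC\,\id$.

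Assume then $\CD$ indecomposable with stability condition $\sigma$, and let $\eta\in\Hom_\CD(\id_\CD,\id_\CD)$. Every nonzero object has a Harder--Narasimhan filtration whose factors, after Jordan--Hölder inside each finite-length subcategory $\CP(\phi)$, are $\sigma$-stable; in particular stable objects exist and generate $\CD$. For a stable object $S$ one has $\End(S)=\BC$, so $\eta_S=\lambda(S)\,\id_S$ for a scalar $\lambda(S)$. Naturality shows $\lambda$ is locally constant: for a nonzero morphism $f\colon S\to S'[n]$ between shifts of stable objects, the naturality square for $f$ forces the scalars attached to $S$ and $S'[n]$ to coincide. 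I then argue global constancy by contraposition: if the stable objects (and their shifts) split into two nonempty classes admitting no morphisms in any degree between them, the extension-closed subcategories they generate give a nontrivial orthogonal decomposition of $\CD$, contradicting indecomposability. Thus $\eta_S=\lambda\,\id_S$ for a single scalar $\lambda$ and every stable $S$.

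It remains to upgrade this to $\eta=\lambda\,\id$ on all of $\CD$, equivalently to show that $\psi:=\eta-\lambda\,\id$, which vanishes on every stable object, is identically zero; this is the main obstacle. I would induct on the Jordan--Hölder length of objects of the heart, passing to all of $\CD$ afterwards via naturality with respect to the truncation triangles of the $t$-structure. Given $0\to S\to E\to Q\to 0$ with $S$ stable and $Q$ of smaller length, naturality of $\psi$ with respect to the inclusion $\iota$ and the projection $\pi$ forces $\psi_E$ to vanish on $S$ and to have image contained in $S$, so $\psi_E=\iota\delta\pi$ for some $\delta\colon Q\to S$. To kill $\delta$ I would exploit naturality with respect to the shear automorphisms of $E\oplus S$, which shows that $\psi_E$ is annihilated by pre- and post-composition with every morphism to or from a stable object, and combine this with the connectivity established above. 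The delicate point — the step I expect to require the most care — is precisely the vanishing of $\delta$ for \emph{non-split} filtrations, where $\psi_E$ can a priori be a nonzero nilpotent (as in $D^b(\BC[x]/x^2)$), so one must use naturality of $\psi$ across the whole family of objects rather than on $E$ alone. Once $\psi\equiv 0$ we obtain $\Hom_\CD(\id_\CD,\id_\CD)=\BC$ on each indecomposable component, and summing over the finitely many components via the product formula above yields $\dim_\BC\Hom_\CD(\id_\CD,\id_\CD)=\#\{\text{components}\}$, as claimed.
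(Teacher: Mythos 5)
Your first two paragraphs reproduce the paper's proof essentially verbatim: the projectors attached to an orthogonal decomposition give $\#\{\text{components}\}\le\dim_\BC\Hom_\CD(\id_\CD,\id_\CD)$, and in the indecomposable case one writes $\eta^S=\lambda(S)\,\id_S$ for $\sigma$-stable $S$, notes that stables with distinct scalars admit no nonzero morphisms between them, and uses indecomposability of $\CD$ --- via the orthogonal triangulated subcategories generated by the eigenvalue classes --- to conclude that all stable objects carry one and the same scalar $\lambda$. The difference is what happens next: the paper ends its proof at that point (``this shows the claim''), silently passing from ``$\eta=\lambda\,\id$ on all $\sigma$-stable objects'' to ``$\eta=\lambda\,\id$'', implicitly because stable objects generate $\CD$ under shifts and cones. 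Your third paragraph is an attempt to actually justify this passage, and, as you say yourself, it is not a proof: the induction on Jordan--H\"older length is only sketched, and the decisive point --- the vanishing of $\delta$ for non-split extensions --- is exactly the step you leave open.

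That step cannot be carried out, and your own test case is the counterexample. Let $\CD=D^b(\BC[x]/x^2)$, with heart $\CA=\mathrm{mod}\,\BC[x]/x^2$ and central charge $Z=i\cdot\dim_\BC$; this is a stability condition whose stable objects are precisely the shifts of the unique simple module $S=\BC[x]/(x)$, and $\CD$ is indecomposable (its generator $\BC[x]/x^2$ has local endomorphism ring). Multiplication by the central element $x$ defines a natural transformation $t$ of $\id_\CD$ (it commutes with every roof of chain maps and with shifts); it vanishes on $S$ and all its shifts, yet $t\neq 0$ because $t^{\BC[x]/x^2}=x\cdot\colon \BC[x]/x^2\to\BC[x]/x^2$ is nonzero. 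Moreover $t$ satisfies every constraint your shear argument can produce --- by naturality, its value on any object is killed by pre- and post-composition with any morphism from or to a stable object --- so no naturality argument ``across the whole family of objects'' will force $\delta=0$; the non-functoriality of cones is precisely what leaves room for such nilpotent transformations. The upshot is that the gap you could not close is a genuine gap in the lemma itself: in this generality ``indecomposable $\Rightarrow\dim_\BC\Hom(\id_\CD,\id_\CD)=1$'' is false, since here $\Hom(\id_\CD,\id_\CD)\supseteq \BC[x]/x^2$ is at least two-dimensional (and equals the two-dimensional $\HH^0$ in the enhanced reading used in Proposition~\ref{prop:hom_components_equiv}); the statement needs an extra hypothesis, e.g.\ that $\Hom(\id_\CD,\id_\CD)$ is reduced, which holds in the paper's geometric applications. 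What survives unconditionally is the inequality $\#\{\text{components}\}\le\dim_\BC\Hom(\id_\CD,\id_\CD)$ together with the product formula from your first paragraph --- which is also the only direction the paper's subsequent arguments actually require.
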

\begin{proof}
If $\CD$ admits an orthogonal decomposition $\oplus_{i=1}^{n} \CD_i$
then the projections to the $i$-th factor define $n$ linearily independent
elements in $\Hom_{\CD}( \id_{\CD}, \id_{\CD})$.
This shows that if $\Hom_{\CD}( \id_{\CD}, \id_{\CD})$ is finite-dimensional of dimension $n$,
then there are at most $n$ components $\CD_i$.

Hence it suffices to show that if $\CD$ is indecomposable,
then $\Hom_{\CD}( \id_{\CD}, \id_{\CD}) = \BC \id$.
Let $t \colon \id_{\CD} \to \id_{\CD}$ be a natural transformation.
For every stable object $A \in \CD$ we have $t^A = \lambda_A \id_A$ for some $\lambda_A \in \BC$.
Moreover, if $t^A = \lambda_A \id$ while $t^B = \lambda_B \id$ for $\sigma$-stable objects $A,B$
with $\lambda_A \neq \lambda_B$, then $\Hom(A,B) = 0$.
Let $\CD_{\lambda}$ be the full triangulated subcategory of $\CD$ generated by
all $\sigma$-stable objects $A$ for which $t^A = \lambda \id$.
The categories are orthogonal and, since $\CD$ is indecomposable, all except one are trivial.
This shows the claim.
\end{proof}

To apply Lemma~\ref{lem:number components} to the equivariant category $\CD_G$
we need to understand the $\BC$-dimension of $\Hom(\id_{\CD_G},\id_{\CD_G})$.
This is provided by the following.

\begin{prop}
\label{prop:hom_components_equiv}
Let $(\rho,\theta)$ be a $G$-action on a triangulated category $\CD$ which lifts to an action on a dg-enhancement of $\CD$. Then there exists an isomorphism
\begin{equation} \label{idDG idDG}
\Hom(\id_{\CD_G}, \id_{\CD_G}) \cong \left( \bigoplus_{g\in G} \Hom(\id_\CD, \rho_g) \right)^G
\end{equation}
where the action on the right is given by conjugation.
\end{prop}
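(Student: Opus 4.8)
The plan is to construct an explicit isomorphism by testing a central natural transformation against the linearization objects $qE$, exploiting the two-sided adjunction between the forgetful functor $p\colon\CD_G\to\CD$ and the linearization functor $q\colon\CD\to\CD_G$ recalled in Section~\ref{subsec:indresfunc}. First I would record the key computation. For $E\in\CD$ the adjunction $\Hom_{\CD_G}(qE,-)\cong\Hom_\CD(E,p-)$ yields a natural isomorphism
\[
\End_{\CD_G}(qE)\cong\Hom_\CD(E,pqE)=\bigoplus_{g\in G}\Hom_\CD(E,\rho_gE),
\]
since $pq\cong\bigoplus_g\rho_g$. The object $qE=\Ind_1^G E$ carries a canonical $G$-action (the regular representation, permuting the summands $\rho_hE$ through the $\theta$'s), and conjugating the displayed endomorphism algebra by this action induces exactly the conjugation $\Hom(E,\rho_gE)\to\Hom(E,\rho_{kgk^{-1}}E)$ appearing in the claim. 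Letting $E$ vary, these identifications are natural, so they assemble into $\bigoplus_g\Hom(\id_\CD,\rho_g)$ together with its conjugation $G$-action.

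Next, the forward map. Given $t\colon\id_{\CD_G}\to\id_{\CD_G}$, for each $E$ the component $t^{qE}$ is central in $\End_{\CD_G}(qE)$, because a natural transformation of the identity commutes with every endomorphism of every object. In particular $t^{qE}$ commutes with the regular-representation automorphisms of $qE$, so it lies in the conjugation-invariants; these are natural in $E$ and assemble to an element of $\bigl(\bigoplus_g\Hom(\id_\CD,\rho_g)\bigr)^G$. Conversely, given an invariant family $(\eta_g)_g$ with $\eta_g\in\Hom(\id_\CD,\rho_g)$, I would define a natural transformation by the explicit formula
\[
t^{(B,\phi)}=\sum_{g\in G}\phi_g^{-1}\circ\eta_g^B\colon B\to B.
\]
Naturality in $(B,\phi)$ is a direct computation using the naturality of each $\eta_g$ together with the compatibility of an equivariant morphism with linearizations, while the requirement that $t^{(B,\phi)}$ define a morphism in $\CD_G$ (i.e.\ commute with the $\phi_k$) is precisely where the $G$-invariance of $(\eta_g)$ is consumed.

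Finally I would verify that these assignments are mutually inverse. Surjectivity is immediate from the explicit formula, and injectivity follows from the fact that every object $(B,\phi)$ is a split retract of $qpB=qB$ via the $G$-invariant morphism $\bigoplus_g\phi_g\colon B\to\bigoplus_g\rho_gB$, with averaged splitting $\tfrac1{|G|}\sum_g\phi_g^{-1}\circ\mathrm{pr}_g$; naturality of $t$ along this retract shows that $t$ is determined by its values $t^{qE}$ and coincides with the transformation produced from its associated invariant family. The main obstacle I anticipate is the bookkeeping in matching the two $G$-actions: tracking the $\theta$-isomorphisms through the regular-representation action on $qE$ and checking that conjugation by the permutation automorphisms corresponds term-by-term to the stated conjugation $\eta_g\mapsto\rho_k\eta_{k^{-1}gk}\rho_{k^{-1}}$. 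The dg-enhancement hypothesis is used only to guarantee that $\CD_G$ is triangulated; the isomorphism itself is a formal consequence of $\BC$-linearity and the two-sided adjunction, and may equivalently be read as the degree-zero part of the Hochschild-cohomology computation of Section~\ref{sec:cohomology}.
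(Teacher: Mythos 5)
Your overall strategy is viable and genuinely different from the paper's proof: the paper deduces the statement from Perry's computation of the Hochschild cohomology of $\CC_G$ (Theorem~\ref{lem:perry}), and in its abelian-category sketch uses the two universal properties (Propositions~\ref{prop:UP1} and~\ref{prop:UP2}) to rewrite the left-hand side as $\Hom(\oplus_g\rho_g,\oplus_g\rho_g)^{G\times G}$ and then applies the adjunction for $\Ind_G^{G\times G}$, whereas you build the isomorphism by hand. Your backward map $t^{(B,\phi)}=\sum_g\phi_g^{-1}\circ\eta_g^B$, the verification that invariance of $(\eta_g)$ is exactly what makes it a morphism in $\CD_G$, and the retract argument via $\bigoplus_g\phi_g\colon(B,\phi)\to qB$ are all correct, and the explicit route has the merit of working for any additive $\BC$-linear category without invoking enhancements.

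There is, however, a genuine gap in the forward direction. You assert that $qE$ carries a canonical $G$-action by automorphisms (``the regular representation, permuting the summands $\rho_hE$ through the $\theta$'s'') and deduce the conjugation-invariance of $(\eta_g)$ from the centrality of $t^{qE}$ in $\End_{\CD_G}(qE)$. For a non-trivial action no such automorphisms exist: permuting summands would require morphisms $\rho_hE\to\rho_{hk}E$, while the $\theta$'s only give morphisms $\rho_h\rho_kE\to\rho_{hk}E$, whose source is $\rho_h(\rho_kE)$; producing a map out of $\rho_hE$ is exactly where a linearization of $E$ would be needed (this is the $G$-action on $qB$ constructed in the proof of Proposition~\ref{prop:UP2}, which exists only for $B$ already equivariant). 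Moreover, centrality alone cannot suffice on logical grounds: it is an object-by-object condition on each $t^{qE}$ separately, whereas conjugation-invariance compares the values of $\eta$ at the \emph{different} objects $E$ and $\rho_kE$; concretely, for $\CD=D^b(X)$ with $G=\BZ_2$ acting by a fixed-point-free involution and $E=\CO_x$ a skyscraper, $\End_{\CD_G}(qE)\cong\BC$ is commutative, so centrality is vacuous there. The correct mechanism is different and is available to you: since $t^{qE}$ is a morphism in $\CD_G$, it commutes with the canonical linearization of $qE$, which forces its matrix of components $T_{g',g}\colon\rho_gE\to\rho_{g'}E$ to satisfy $T_{hg',hg}=\rho_h(T_{g',g})$ (strict case); combining this with the canonical identification $q(\rho_kE)\cong qE$ of objects of $\CD_G$ (through which naturality of $t$ identifies $t^{q\rho_kE}$ with $t^{qE}$) and comparing the two component extractions yields precisely $\eta_{kgk^{-1}}^{\rho_kE}=\rho_k(\eta_g^E)$, i.e.\ the stated invariance. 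With this replacement your proof goes through.
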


More precisely, the $G$-action is given as follows. An element $g \in G$ acts by
\[ g \colon \Hom(\id_{\CD}, \rho_h) \to \Hom(\id_{\CD}, \rho_{ghg^{-1}}) \]
by sending $t \colon \id_{\CD} \to \rho_h$ to $g \bullet t$ which is defined by the commutative diagram
\begin{equation} \label{isom conjugation}
\begin{tikzcd}
\rho_g \rho_{g^{-1}} \ar{r}{\rho_g t \rho_{g^{-1}}} \ar{dd}{\theta_{g,g^{-1}}}
& \rho_g \rho_h \rho_{g^{-1}} \ar{d}{\theta_{g,h} \rho_{g^{-1}}} \\
& \rho_{gh} \rho_{g^{-1}} \ar{d}{\theta_{gh, g^{-1}}} \\
\id_{\CD} \ar{r}{g \bullet t} & \rho_{ghg^{-1}}.
\end{tikzcd}
\end{equation}

\begin{proof}[Proof of Proposition~\ref{prop:hom_components_equiv}]
The left hand side of \eqref{idDG idDG} 
can be identified with the degree $0$ Hochschild cohomology of $\CD_G$
and the result hence follows from the description of the Hochschild cohomology of $\CD_G$ by Perry, see \cite{Perry} and Theorem~\ref{lem:perry} below.

However, to provide an idea of the proof, let us nevertheless sketch the argument
in the case where $\CD$ is an abelian category.
The argument used to prove the result of Perry is parallel
by translating all steps into the language of dg-categories.

Hence let $\CD$ be an abelian category.
By Proposition~\ref{prop:UP2} and its proof we have
\[ \Hom(\id_{\CD_G}, \id_{\CD_G}) \cong \Hom( \id_{\CD_G} q, \id_{\CD_G} q )^G \]
where $g$ acts on $\Hom(q,q)$ by sending $t \colon q \to q$
to the composition $q \cong qg \xrightarrow{tg} qg \cong q$
(the isomorphisms are provided by $\theta$).
By Proposition~\ref{prop:UP1} we have further
\[ \Hom( \id_{\CD_G} q, \id_{\CD_G} q ) \cong \Hom( p\id_{\CD_G} q, p\id_{\CD_G} q )^G \]
where $g \in G$ acts by sending $t \colon pq \to pq$ to $pq \cong gpq \xrightarrow{gt} gpq \cong pq$.
This yields
\[ \Hom(\id_{\CD_G}, \id_{\CD_G}) \cong \Hom\left( \oplus_{g \in G} \rho_g,\, \oplus_{g \in G} \rho_g \right)^{G \times G} \]
where $(g_1, g_2)$ acts via $t \mapsto g_1 t g_2$.
Finally observe that we have
\[ \bigoplus_{g \in G} \rho_g = \Ind_{G}^{G \times G} \id_{\CD} \]
where the embedding $G \hookrightarrow G \times G$ is given by $g \mapsto (g,g^{-1})$.
This yields the claim by the adjunction with the restriction functor $\mathrm{Res}_{G}^{G \times G}$:
\[
\Hom\left( \oplus_{g \in G} \rho_g,\, \oplus_{g \in G} \rho_g \right)^{G \times G}
\cong
\Hom\left( \id_{\CD}, \oplus_{g \in G} \rho_g \right)^{G}.\]
\end{proof}

\subsection{Faithful actions I}
Consider a $G$-action on $\CD$ such that $\CD_G$ is triangulated.
We say that the $G$-action on $\CD$ is \emph{faithful} if the associated equivariant category is indecomposable.
We have the following criterion for an action to be faithful.

\begin{thm} \label{prop:faithful action2}
Let $(\rho, \theta)$ be a $G$-action on an indecomposable triangulated category $\CD$
which preserves a stability condition $\sigma$
and lifts to an action on a dg-enhancement of $\CD$.
If $\rho_g \not\cong \id$ for all $g \neq 1$, then $G$ acts faithfully.
\end{thm}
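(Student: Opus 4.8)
The plan is to reduce the statement to a one-dimensionality count for natural transformations of the identity functor on $\CD_G$, combining the criterion of Lemma~\ref{lem:number components} with the computation in Proposition~\ref{prop:hom_components_equiv}. Since the $G$-action preserves $\sigma$ and $\CD_G$ is triangulated, the pair $\sigma_G = (\CA_G, Z_G)$ from Section~\ref{subsec:stab_conditions} is a stability condition on $\CD_G$, so Lemma~\ref{lem:number components} applies to $\CD_G$: it suffices to show that $\Hom(\id_{\CD_G}, \id_{\CD_G})$ is one-dimensional, as then $\CD_G$ has a single component and the action is faithful. Using the hypothesis that the action lifts to a dg-enhancement, Proposition~\ref{prop:hom_components_equiv} identifies this space with
\[
\Big( \bigoplus_{g \in G} \Hom(\id_\CD, \rho_g) \Big)^G .
\]
Because $\CD$ is indecomposable and carries $\sigma$, Lemma~\ref{lem:number components} gives $\Hom(\id_\CD, \id_\CD) = \BC\,\id$, and the conjugation action of $G$ on this line is trivial (in the diagram \eqref{isom conjugation} the element $\id$ is sent to $\id$). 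Thus the $g=1$ summand contributes exactly a one-dimensional invariant subspace, and since conjugation permutes the remaining summands among the non-identity elements, everything reduces to proving that $\Hom(\id_\CD, \rho_g) = 0$ for each $g \neq 1$.

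To establish this vanishing I would argue as in the proof of Lemma~\ref{lem:number components}. Let $t \colon \id_\CD \to \rho_g$ be a natural transformation. For a $\sigma$-stable object $A$ the object $\rho_g A$ is again $\sigma$-stable of the same phase, since $\rho_g$ fixes $\sigma$; hence $t^A \colon A \to \rho_g A$ is a morphism of stable objects of equal phase and is therefore either zero or an isomorphism. Using the naturality square of $t$ exactly as in the proof of Lemma~\ref{lem:number components}, one checks that the full triangulated subcategories generated by those stable $A$ with $t^A$ an isomorphism, and by those with $t^A = 0$, respectively, are mutually orthogonal. As $\CD$ is indecomposable, one of the two subcategories is trivial.

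If every $\sigma$-stable object $A$ satisfied that $t^A$ is an isomorphism, then propagating through Harder--Narasimhan and Jordan--Hölder filtrations, using that a morphism of distinguished triangles with two of three vertical arrows isomorphisms has the third an isomorphism, would force $t$ to be an isomorphism on all of $\CD$. This would yield $\id_\CD \cong \rho_g$, contradicting the hypothesis $\rho_g \not\cong \id$ for $g \neq 1$. Hence instead $t^A = 0$ for all $\sigma$-stable $A$, and the analogous filtration argument (as in Lemma~\ref{lem:number components}) forces $t = 0$. Therefore $\Hom(\id_\CD, \rho_g) = 0$ for $g \neq 1$, the invariants above reduce to $\BC\,\id$, and $\Hom(\id_{\CD_G}, \id_{\CD_G})$ is one-dimensional, so $\CD_G$ is indecomposable and $G$ acts faithfully.

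The main obstacle is the vanishing step, and specifically the passage from ``$t^A = 0$ on all $\sigma$-stable $A$'' to ``$t = 0$''. As in Lemma~\ref{lem:number components}, this is the delicate point: a natural transformation need not vanish merely because it vanishes on a set of generators, and one must genuinely use the compatibility of $t$ with the triangulated structure along the stability filtrations. The complementary ``isomorphism'' case is comparatively painless, since the five-lemma-type propagation of isomorphisms through triangles is automatic.
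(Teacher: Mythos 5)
Your proposal is correct and follows essentially the same route as the paper: reduce to a dimension count via Lemma~\ref{lem:number components} and Proposition~\ref{prop:hom_components_equiv}, then kill the summands $\Hom(\id_\CD,\rho_g)$ for $g\neq 1$ using the zero-or-isomorphism dichotomy for natural transformations $\id_\CD \to \rho_g$ on $\sigma$-stable objects. The only cosmetic difference is that you prove this dichotomy inline, whereas the paper isolates it as Lemma~\ref{lem:t isomorphism} (whose proof, like yours, defers the ``vanishing on stables implies vanishing'' step to the argument of Lemma~\ref{lem:number components}).
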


We refer to Section~\ref{subsec:equiv cat of not act example} below for an example
which shows that the converse of Theorem~\ref{prop:faithful action2} does not hold.

The proof relies on the following lemma.
\begin{lemma} \label{lem:t isomorphism}
Let $\CD$ be an indecomposable triangulated category with an action by a finite group $G$ which fixes a stability condition $\sigma$.
For any $g \in G$, if $t \colon \id \to \rho_g$ is a natural transformation, then either $t = 0$ or $t$ is an isomorphism.
\end{lemma}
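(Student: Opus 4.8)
The plan is to exploit the interaction between the natural transformation $t \colon \id \to \rho_g$ and the fixed stability condition $\sigma$, leveraging the indecomposability of $\CD$ in the spirit of the proof of Lemma~\ref{lem:number components}. First I would evaluate $t$ on $\sigma$-stable objects. For any $\sigma$-stable $A \in \CD$ the object $\rho_g A$ is again $\sigma$-stable of the same phase, since $\rho_g$ fixes $\sigma$. The component $t^A \colon A \to \rho_g A$ is therefore a morphism between stable objects of equal phase, so it is either $0$ or an isomorphism (a nonzero map between stable objects of the same slope that is not an isomorphism would contradict stability, as its image would destabilize). This dichotomy, holding pointwise on all stable objects, is the crucial local input.

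Next I would upgrade this pointwise statement to a global one using indecomposability. Let $\CS_0$ be the set of $\sigma$-stable objects $A$ with $t^A = 0$ and $\CS_1$ the set of those with $t^A$ an isomorphism; these partition all stable objects. The idea is to show that if both sets are nonempty, then $\CD$ decomposes as a nontrivial orthogonal direct sum, contradicting indecomposability. To this end I would argue that there are no nonzero morphisms between an object of $\CS_0$ and an object of $\CS_1$ (and vice versa), using the naturality square of $t$: for a morphism $\psi \colon A \to B$ the square relating $t^A, t^B$ and $\rho_g \psi$ forces $\psi$ to vanish when exactly one of $t^A, t^B$ is an isomorphism and the other is $0$, since $\rho_g \psi \circ t^A = t^B \circ \psi$. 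I would then let $\CD_0$ and $\CD_1$ be the full triangulated subcategories generated by $\CS_0$ and $\CS_1$ respectively; orthogonality of the generators propagates to orthogonality of these subcategories, and together they generate $\CD$. Indecomposability then forces one of them to be trivial, i.e.\ either $t^A = 0$ for all stable $A$ or $t^A$ is an isomorphism for all stable $A$.

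Finally I would pass from stable objects to all of $\CD$. In the first case, $t$ vanishes on a generating set of stable objects, and since the full subcategory of objects on which $t$ vanishes is a triangulated subcategory containing all stable objects, $t = 0$ identically. In the second case, $t$ is an isomorphism on all stable objects; the full subcategory of objects $E$ for which $t^E$ is an isomorphism is closed under shifts and cones (by the five lemma applied to the morphism of triangles induced by $t$), hence triangulated, and contains all stable objects, so it is all of $\CD$ and $t$ is an isomorphism. This yields the desired dichotomy $t = 0$ or $t$ an isomorphism.

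The main obstacle I anticipate is the orthogonality argument in the middle paragraph: making precise that the pointwise dichotomy on stable objects genuinely produces an \emph{orthogonal} decomposition of the whole category rather than merely a partition of the stable objects. One must check carefully that the subcategories generated by $\CS_0$ and $\CS_1$ are mutually orthogonal as triangulated subcategories (not just that the generators have no morphisms between them) and that every object of $\CD$ decomposes accordingly; the cleanest route is to mimic exactly the bookkeeping in the proof of Lemma~\ref{lem:number components}, where an analogous orthogonal-decomposition argument is carried out for a natural endotransformation of $\id_{\CD}$, adapting it to the target $\rho_g$ via the fact that $\rho_g$ preserves $\sigma$ and hence stability and phases.
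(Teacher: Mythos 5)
Your proposal follows the paper's proof essentially verbatim: the paper likewise observes that, since $\rho_g$ fixes $\sigma$, both $A$ and $\rho_g A$ are stable of the same phase so each component $t^A$ is zero or an isomorphism, that naturality forces $\Hom(A,B)=0$ when $t^A$ is invertible and $t^B=0$, and then concludes by exactly the orthogonal-decomposition-plus-indecomposability argument of Lemma~\ref{lem:number components}. The only difference is one of detail: you spell out the passage from stable objects to all of $\CD$ (via generation and the five lemma), which the paper leaves implicit in its reference to that lemma.
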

\begin{proof}
If $A$ is a $\sigma$-stable object, then so is its image $gA$, and in this case
by stability the morphism $t_A \colon A \to gA$
is either an isomorphism or zero.
Moreover, if $t_A$ is an isomorphism while $t_B=0$ for some $\sigma$-stable objects $A,B$,
then
$\Hom(A,B) = 0$.
The claim now follows by arguing as in the proof of Lemma~\ref{lem:number components}.
\end{proof}

\begin{proof}[Proof of Theorem~\ref{prop:faithful action2}]
By Lemma~\ref{lem:number components} the number of components of $\CD_G$
is the dimension of \eqref{idDG idDG}. By Lemma~\ref{lem:t isomorphism} this dimension is $1$.
\end{proof}

\subsection{Faithful actions II} 
We show that when determining equivariant categories it is enough to consider faithful actions.

\begin{thm} \label{prop:faithful action}
Let $(\rho, \theta)$ be a $G$-action on a indecomposable triangulated category $\CD$ 
which preserves a stability condition $\sigma$
and lifts to an action on a dg-enhancement of $\CD$.
Then there exists a finite decomposition
\[ \CD_G = \bigoplus_{i} \CD_i \]
and for every $i$ a faithful action $\rho_i$ by a finite group $K_i$ on $\CD$ such that $\CD_i \cong \CD_{K_i}$.
Moreover, every $\rho_i$ preserves the stability condition $\sigma$.

If $G$ is abelian, then there exists a subgroup $H \subset G$ such that
for every $i$ we can take $K_i = G/H$ and the map $G \to \Aut \CD$ factors through $G/H \xrightarrow{\rho_i} \Aut \CD$.
\end{thm}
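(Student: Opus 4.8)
The plan is to use the two main tools developed in this section: the decomposition of the Hochschild degree zero part of $\CD_G$ from Proposition~\ref{prop:hom_components_equiv}, and the stability-theoretic description of the $\sigma_G$-stable equivariant objects from Lemma~\ref{Lemma_stability}. First I would establish the existence of the finite decomposition $\CD_G = \bigoplus_i \CD_i$ into indecomposable components. By Lemma~\ref{lem:number components} applied to the induced stability condition $\sigma_G$ on $\CD_G$, the number of components equals $\dim_{\BC} \Hom(\id_{\CD_G}, \id_{\CD_G})$, which by Proposition~\ref{prop:hom_components_equiv} is finite and equals the dimension of $\big( \bigoplus_{g \in G} \Hom(\id_\CD, \rho_g) \big)^G$. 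So the decomposition exists and is finite.

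The heart of the argument is to identify each component $\CD_i$ as an equivariant category of a faithful action. For this I would proceed as follows. Each indecomposable component $\CD_i$ is cut out by a central idempotent, i.e.\ by a direct summand of $\id_{\CD_G}$; under the isomorphism \eqref{idDG idDG} these idempotents correspond to a decomposition of $\big( \bigoplus_{g} \Hom(\id_\CD, \rho_g) \big)^G$ into a sum of one-dimensional pieces indexed by the $i$. The objects of $\CD_i$ are the equivariant objects $(E,\phi)$ whose class lands in $\CD_i$; by Lemma~\ref{Lemma_stability}(ii), every $\sigma_G$-stable object $(E,\phi)$ has underlying $E = \bigoplus_{g \in G/H} \phi_g^{-1}(gF)$ for a $\sigma$-stable $F$ and a subgroup $H = H(E)$ (the stabilizer, in the linearization sense, of $F$). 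The strategy is to show that this subgroup $H$, and more precisely the resulting induced action, is constant along a fixed component $\CD_i$, and that the component is recovered as the equivariant category $\CD_{K_i}$ for an appropriate finite group $K_i$. Concretely, one builds $K_i$ as a group (a quotient or subquotient of $G$, assembled from the stabilizer data and the relevant projective representation) acting on $\CD$ via the restriction of $\rho$ and twisting by a class in $H^2(H,\BC^*)$, and checks via the universal property (Proposition~\ref{prop:UP1}) and Lemma~\ref{lem:decomposition trivial action} that $\CD_i \cong \CD_{K_i}$. That this action $\rho_i$ is faithful follows from Theorem~\ref{prop:faithful action2}: by construction $\CD_i \cong \CD_{K_i}$ is indecomposable, which is exactly the definition of faithfulness, and one must verify that $\sigma$ restricts to a $\rho_i$-fixed stability condition, so that the hypotheses of the earlier results are met.

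For the abelian case, the simplification is structural: when $G$ is abelian the stabilizer subgroup $H$ appearing in Lemma~\ref{Lemma_stability}(ii) can be taken independent of the component, because all the $\rho_g$ commute and the obstruction to $\sigma$-stable objects being linearizable over $H$ is governed uniformly by characters of $G$ (cf.\ Lemma~\ref{rmk:Ploog_simple} and the dual-group action). Here I would argue that there is a single subgroup $H \subset G$ — the subgroup of $g$ for which $\rho_g \cong \id$, or more precisely those $g$ acting trivially up to the twist realized on a stable object — such that the residual action factors through $G/H$, and that every component is $\CD_{G/H}$ with the induced action $\rho_i$ differing only by a character twist. The map $G \to \Aut\CD$ then visibly factors through $G/H \xrightarrow{\rho_i} \Aut\CD$.

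The main obstacle I expect is the passage from the \emph{numerical} count of components (straightforward from Proposition~\ref{prop:hom_components_equiv} and Lemma~\ref{lem:number components}) to the \emph{structural} identification $\CD_i \cong \CD_{K_i}$ as equivariant categories. The delicate points are: showing that the subgroup $H$ and the associated projective class are genuinely constant along each component (rather than varying object by object), and correctly assembling the group $K_i$ together with its twisted action so that taking its equivariant category reproduces exactly the full subcategory $\CD_i$ — including matching the Hom-spaces and the triangulated structure, not merely the stable objects. Verifying that $\sigma$ descends to a $\rho_i$-fixed stability condition on $\CD$ for each $i$ is the technical check that ties the construction back to the standing hypotheses and closes the argument.
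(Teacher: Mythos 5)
Your opening step (finiteness of the decomposition via Lemma~\ref{lem:number components} applied to $\sigma_G$ together with Proposition~\ref{prop:hom_components_equiv}) is exactly the paper's, but the heart of the theorem --- the identification $\CD_i \cong \CD_{K_i}$ --- is precisely the part you leave as a plan, and the plan rests on two claims that are false. First, the subgroup $H$ of Lemma~\ref{Lemma_stability}(ii) is attached to an individual $\sigma_G$-stable object (it records which $h$ satisfy $hF = \phi_h F$ for a stable summand $F$), and it is \emph{not} constant along a component: let $G = \BZ_2$ act on $D^b(E)$, $E$ an elliptic curve, by pushforward along $x \mapsto -x$. This fixes the standard stability condition and lifts to the dg-enhancement; since $\iota_\ast \not\cong \id$, Theorem~\ref{prop:faithful action2} shows $D^b(E)_{\BZ_2}$ is indecomposable. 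Yet stable equivariant objects supported on free orbits have per-object subgroup $H = \{1\}$, while linearized skyscrapers at the four fixed points have $H = \BZ_2$: the stabilizer data varies object by object inside the single component, so no group $K_i$ can be read off from it. Second, your abelian-case candidate $H = \{g : \rho_g \cong \id\}$ is also wrong, and the paper's own example refutes it: in Section~\ref{subsec:equiv cat of not act example} one has $\rho_{g^2} \cong \id$ for the $\BZ_4$-action generated by $g = \chi \circ \bar{\tau}$, yet $\CD'_{\BZ_4} \cong \CD'$ is indecomposable, so the correct answer is $K = \BZ_4$ and $H = \{1\}$, not $H = \BZ_2$. What matters is not whether $\rho_h$ is trivializable but whether it is trivializable \emph{compatibly with conjugation}, i.e.\ whether $\Hom(\id_{\CD}, \rho_h)^G \neq 0$; in that example the trivialization $t$ satisfies $gt = -tg$ and invariance fails.

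The paper's proof avoids per-object data entirely and works globally, and this is the missing idea. It defines $H \subset G$ as the set of elements whose conjugacy class $\Fc$ satisfies $V_{\Fc} = \bigl( \bigoplus_{g \in \Fc} \Hom(\id_{\CD}, \rho_g) \bigr)^G \neq 0$, checks that $H$ is a normal subgroup admitting $G$-compatible trivializations $t_h \colon \id \xrightarrow{\cong} \rho_h$, and modifies the action so that the restriction to $H$ is $\rho$-trivial. Then Lemma~\ref{lem:decomposition trivial action} gives $\CD_H = \bigoplus_{i \in I} \CD \otimes V_i$ over the projective irreducibles of $H$, the residual $G/H$-action of Proposition~\ref{rem:successive_quotients} permutes these summands, and Lemma~\ref{lem:transitive action} --- which you never invoke --- converts each $G$-orbit of summands into an equivalence $(\CE_j)_{G/H} \cong (\CD \otimes V_{i_j})_{K_j}$ with $K_j$ the stabilizer of a chosen summand. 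Finally, indecomposability of each $(\CD \otimes V_{i_j})_{K_j}$ (equivalently, faithfulness of $K_j$) is not checked directly but follows from a counting argument: $|I/G|$ equals the number of conjugacy classes of $G$ contained in $H$, which equals $\dim \Hom(\id_{\CD_G}, \id_{\CD_G})$, which by Lemma~\ref{lem:number components} is the number of components, so the summands produced must be the components themselves. Without the global definition of $H$, the orbit/stabilizer mechanism of Lemma~\ref{lem:transitive action}, and this counting step, your outline cannot be completed.
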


For the proof we will need the following lemma of independent interest.

\begin{lemma} \label{lem:transitive action}
Let $G$ act on triangulated category $\CD$ with a dg-enhancement.
Let $\CD = \oplus_i \CD_i$ be an orthogonal decomposition
such that $G$ acts transitively on the set of components $\{ \CD_i \}$. 
Let $K \subset G$ be the subgroup of elements which preserves a fixed component $\CD_0$.
Then the composition of the inclusion $(\CD_0)_K \hookrightarrow \CD_K$ and the induction functor $\Ind_K^{G} \colon\CD_K \to \CD_G$
defines an equivalence
\[ F \colon (\CD_0)_K \xrightarrow{\cong} \CD_G. \]
The composition $(\CD_0)_K \xrightarrow{F} \CD_G \xrightarrow{p} \CD$ is given by $(E,\phi) \mapsto \oplus_{g \in G/K} gE$.
\end{lemma}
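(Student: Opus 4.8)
The plan is to show that $F$ is fully faithful and essentially surjective, after first recording its effect on underlying objects. Fix coset representatives $g_i$ of $G/K$ with $g_0 = 1$. Since $G$ permutes the components transitively with $K$ the stabilizer of $\CD_0$, the assignment $g_iK \mapsto \rho_{g_i}(\CD_0)$ is a bijection of $G/K$ onto $\{\CD_i\}$, so I relabel the components as $\CD_i = \rho_{g_i}(\CD_0)$. For $(E,\phi)\in(\CD_0)_K$ the induction formula gives $F(E,\phi) = (\bigoplus_i \rho_{g_i}E,\, \phi^G)$ with $\rho_{g_i}E \in \CD_i$; forgetting the linearization yields $pF(E,\phi) = \bigoplus_i \rho_{g_i}E = \bigoplus_{g\in G/K} gE$, which is the asserted formula and places exactly one summand in each component.

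For full faithfulness I would combine the adjunction $\Ind_K^G \dashv \mathrm{Res}_K^G$ from Section~\ref{subsec:indresfunc} with orthogonality of the components. For $(E,\phi),(E',\phi')\in(\CD_0)_K$ this gives
\[ \Hom_{\CD_G}(\Ind_K^G E,\Ind_K^G E') \cong \Hom_{\CD_K}(E,\mathrm{Res}^G_K\Ind_K^G E') \cong \Hom_\CD\left(E,\bigoplus_i \rho_{g_i}E'\right)^{K}. \]
As $E\in\CD_0$ and $\rho_{g_i}E'\in\CD_i$, orthogonality annihilates every summand except $i=0$, where $\rho_{g_0}E'=E'$; because $K$ fixes the subcategory $\CD_0$ this vanishing is $K$-equivariant and the residual $K$-action is precisely the one cutting out $\Hom_{(\CD_0)_K}(E,E') = \Hom_\CD(E,E')^K$. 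Hence the composite is the identification we want. The point demanding care is to verify that this chain of isomorphisms is the map induced by $F$ itself, rather than merely an abstract identification of Hom-spaces; I expect this — tracking the structural maps $\theta$ through the adjunction — to be the main obstacle.

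For essential surjectivity, take $(A,\psi)\in\CD_G$ and write $A = \bigoplus_i A_i$ with $A_i\in\CD_i$. Orthogonality forces each linearization $\psi_g\colon A\to\rho_g A$ to be block-diagonal with respect to the component decomposition, so it restricts to isomorphisms $A_i \xrightarrow{\cong}\rho_g A_{g^{-1}\cdot i}$. Setting $E := A_0$ and $\phi_k := \psi_k|_{A_0}\colon A_0 \to \rho_k A_0$ for $k\in K$ defines a $K$-linearization of $E$ (the cocycle condition is inherited from $\psi$), so $(E,\phi)\in(\CD_0)_K$. The isomorphisms $\psi_{g_i}|_{A_i}\colon A_i \xrightarrow{\cong}\rho_{g_i}E$ assemble into an isomorphism $A \cong \bigoplus_i\rho_{g_i}E = pF(E,\phi)$, and a direct check using the cocycle condition for $\psi$ promotes it to an isomorphism $(A,\psi)\cong F(E,\phi)$ in $\CD_G$. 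This yields essential surjectivity and finishes the argument; the dg-enhancement hypothesis enters only to guarantee that $(\CD_0)_K$, $\CD_K$ and $\CD_G$ are triangulated and $F$ exact.
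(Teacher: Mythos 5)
Your proof is correct, and it agrees with the paper's on the first half: the paper also establishes full faithfulness via the adjunction (it observes that $\mathrm{Res}^G_K$ followed by projection to $(\CD_0)_K$ is both left and right adjoint to $F$, and calls the rest "a straightforward calculation" — the same unit-of-adjunction check you flag as the delicate point; your worry is resolved by noting that the unit $E \to \mathrm{Res}^G_K\Ind_K^G E$ is the inclusion of the identity-coset summand, so composing with the projection to the $\CD_0$-block is the identity). Where you genuinely diverge is essential surjectivity. The paper argues abstractly: since $F$ is fully faithful with both adjoints, its image is admissible and gives a semi-orthogonal decomposition $\CD_G = \langle (\CD_0)_K, \CE \rangle$; for $A \in \CE$ the adjunction forces the projection of $pA$ to $\CD_0$ to vanish, and since the linearization isomorphisms permute the components of $pA$ transitively, all components vanish, so $pA = 0$ and hence $A = 0$ ($p$ is conservative). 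You instead build an explicit preimage: decompose $A = \oplus_i A_i$, use orthogonality to see every $\psi_g$ is block-diagonal, restrict the linearization to the $\CD_0$-block to get $(A_0,\phi) \in (\CD_0)_K$, and assemble the blocks $\psi_{g_i}|_{A_i}$ into an equivariant isomorphism $A \cong F(A_0,\phi)$ via the cocycle relation. Each approach has its merits: the paper's is shorter given the semi-orthogonal decomposition machinery (which is where the triangulated/dg hypotheses do real work), while yours is more elementary — the surjectivity argument is purely additive, produces the quasi-inverse and the formula for $p \circ F$ explicitly, and makes visible exactly how the cocycle condition for $\psi$ is consumed.
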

\begin{proof}
Since $\CD$ has a dg-enhancement, all equivariant categories are triangulated.
The composition of $\mathrm{Res}^G_K \colon \CD_G \to \CD_K$
followed by the restriction to $(\CD_0)_K$ is the adjoint (both left and right) to $F$.
This and a straightforward calculation implies that $F$ is fully faithful and hence yields a semi-orthogonal decomposition
$\CD_G = \langle (\CD_0)_K , \CE \rangle$. Moreover, for any object $A \in \CE$ we have that the projection of $pA$
to $\CD_0$ vanishes and since $pA$ is $G$-invariant, one obtains that $pA=0$, so $A=0$.
\end{proof}

\begin{proof}[Proof of Theorem~\ref{prop:faithful action}]
Recall from Proposition~\ref{prop:hom_components_equiv} that we have
\begin{equation} \Hom_{\CD_G}(\id_{\CD_G}, \id_{\CD_G}) \cong \left( \bigoplus_{g \in G} \Hom_{\CD}(\id_{\CD}, \rho_g) \right)^G. \label{aaabc} \end{equation}
Since $G$ acts by conjugation on the right hand side, we can consider the decomposition according to conjugacy classes $\Fc$: 
\begin{equation*} 
\Hom_{\CD_G}(\id_{\CD_G}, \id_{\CD_G}) = \bigoplus_{\Fc} V_{\Fc}\ \text{ with } \ 
V_{\Fc} = \left( \bigoplus_{g \in \Fc} \Hom(\id_{\CD}, \rho_g) \right)^G.
\end{equation*}
For any non-trivial transformations $t_1 \colon \id \to \rho_g$ and $t_2 \colon \id \to \rho_g$
by Lemma~\ref{lem:t isomorphism} we can form the composition $t_2^{-1} \circ t_1$, which (since $\CD$ is indecomposable) is a multiple of the identity.
We conclude that $\dim V_{\Fc} \in \{ 0, 1 \}$ for all $\Fc$.

Let $H \subset G$ be the set of all elements whose conjugacy class $\Fc$ satisfies $\dim V_{\Fc} = 1$.
A direct check shows that $H$ is a subgroup of $G$. In particular, it is normal. Moreover for all $h \in H$ there exist isomorphisms\footnote{
The isomorphisms $\rho_g t_h \rho_{g^{-1}} \cong t_{ghg^{-1}}$ are provided by
\eqref{isom conjugation}.}
\begin{equation} t_h \colon \id \xrightarrow{\cong} \rho_h \,\text{ such that }\, \rho_g t_h \rho_{g^{-1}} \cong t_{g h g^{-1}} \text{ for all } g \in G. \label{com_rln} \end{equation}
After modifying the action by the isomorphisms $t_h$, we may assume $\rho_h = \id$ and $t_h = \id$.
Relation \eqref{com_rln} is then equivalent to the condition that the composition
\begin{equation}\label{some eqn}
\begin{tikzcd}
\id \ar{r}{\theta_{g,g^{-1}}^{-1}} 
& \rho_{g}\rho_{g^{-1}} \overset{\rho_g t_h \rho_{g}^{-1}}{=} \rho_g \rho_h \rho_{g^{-1}} \ar{r}{\theta_{g,h} \rho_{g^{-1}}} 
& \rho_{gh} \rho_{g^{-1}} \ar{r}{\theta_{gh, g^{-1}}} & \rho_{ghg^{-1}} \overset{t_{ghg^{-1}}}{=} \id
\end{tikzcd} 
\end{equation}
is the identity for all $g \in G$ and $h \in H$.

Let $V_i$, $i \in I$ be the projective irreducible representations of $H$ for the cocycle
defined by $\theta$. 
We consider the decomposition of Example~\ref{exmp:trivial action},
\[ \CD_H = \bigoplus_{i \in I} \CD \otimes V_i, \]
and the induced $G$-action on $\CD_H$ as in Proposition~\ref{rem:successive_quotients}.
For any $\tilde{E} = (E \otimes V_i, \phi)$ in $\CD_H$ we have
$\rho_g \tilde{E} = (\rho_g (E) \otimes V_i, \phi')$ where
\[ \phi'_h = [ \rho_g (E) \otimes V_i \xrightarrow{\phi_{g^{-1} h g}}
\rho_g \rho_{g^{-1} h g}(E) \otimes V_i \xrightarrow{(*)} \rho_{h} \rho_g(E) \otimes V_i ]
\]
and the isomorphism (*) is precisely $\rho_g$ applied to the inverse of \eqref{some eqn}. 
Since (*) is the identity
and $\phi_{g^{-1} h g} = \id_E \otimes f_i(g^{-1}h g)$, we find that
\[ \rho_g \tilde{E} \in \CD \otimes V_{g(i)} \]
where $V_{g(i)}$ is the irreducible representation defined by $h \mapsto f_i(g^{-1} h g)$.

This shows that for any $j \in I/G$ (where $G$ acts on the index set $I$ by $i \mapsto g(i)$)
the $G$-action on $\CD_H$ preserves and acts transitively on the subcategory
\[ \CE_j = \bigoplus_{i \in I \colon \bar{i} = j} \CD\otimes V_i. \]
Consider the quotient action $G/H$ on $\CD_H$ as in Proposition~\ref{rem:successive_quotients}.
Then $G/H$ acts also transitively on $\CE_j$. Let
$K_j \subset G/H$
be the subgroup which preserves a given (fixed) summand $\CD \otimes V_{i_j}$ of $\CE_j$.
Applying Lemma~\ref{lem:transitive action} yields the equivalence
\[ (\CE_j)_{G/H} \cong (\CD \otimes V_{i_j})_{K_j}. \]
Hence by Proposition~\ref{rem:successive_quotients} we find that
\begin{equation} \label{qqqqqq}
\CD_G \cong (\CD_H)_{G/H} \cong \bigoplus_{j \in I/G} (\CE_j)_{G/H} \cong \bigoplus_{j \in I/G} (\CD \otimes V_{i_j})_{K_j}.
\end{equation}

The order of $I$ equals the number of conjugacy classes of $H$.\footnote{This uses
that by \eqref{some eqn} all conjugacy classes of $H$ consist of '$\alpha$-elements'
where $\alpha$ is the cocycle defined by $\theta$, see \cite[Sec.\ 2]{Cheng}.}
Hence the order of $I/G$ equals the number of conjugacy classes of $G$ which are contained in $H$.
This latter number is by construction
the dimension of \eqref{aaabc},
which by Lemma~\ref{lem:number components} is precisely the number of components of $\CD_G$.
This shows that the summands $(\CD \otimes V_{i_j})_{K_j}$ in \eqref{qqqqqq} are the components of $\CD_G$, and therefore indecomposable.
Hence the action of $K_j$ on $\CD \otimes V_{i_j} \cong \CD$ is faithful.
Moreover, since the induced $G$-action on $\CD_H$ preserves the induced stability condition $\sigma_H$,
and $\sigma_H$ restricts to every $\CD \otimes V_i \cong \CD$ as $\sigma$, the $K_j$-action preserves $\sigma$.
If $G$ is abelian, then the action of $G$ on $I$ is trivial, so $K_j = G/H$ for all $j$.
\end{proof}

\subsection{A stronger version of Theorem~\ref{lem:obstruction to action}}
Using the techniques from this section, we can prove the following
stronger version of part (c) of Theorem~\ref{lem:obstruction to action}.
\begin{cor}
\label{cor:inv_simple_cyclic}
Let $\CD$ be indecomposable and let $\BZ_n \subset \Aut \CD$ such that
\begin{itemize}
\item $\BZ_n$ preserves a stability condition $\sigma = (\CA,Z)$ with $D^b(\CA) \cong \CD$, and
\item there exists a $\BZ_n$-invariant simple object in $\CD$.
\end{itemize}
Then there exists an action of $\BZ_n$ on $\CD$ such that the induced map $\BZ_n \to \Aut \CD$ is the inclusion we started with.
\end{cor}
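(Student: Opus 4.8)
The plan is to exhibit, for a given $\BZ_n \subset \Aut \CD$, an explicit action of $\BZ_n$ on $\CD$ inducing the inclusion, thereby showing the obstruction class in $H^3(\BZ_n, \BC^{\ast})$ of Theorem~\ref{lem:obstruction to action} vanishes. Fix a generator $g$ of $\BZ_n$ and choose a functor $\rho_g \colon \CD \to \CD$ representing it. Pick a $\BZ_n$-invariant simple object $S \in \CD$, so that $\rho_g^n \cong \id_\CD$ and we have an isomorphism $\psi \colon S \xrightarrow{\cong} \rho_g S$. The strategy is to use $S$ to rigidify all the choices: having a preferred object on which everything acts by scalars will pin down the coherence isomorphisms and kill the obstruction.

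First I would record that, since $\CD$ is indecomposable and carries the stability condition $\sigma$, Lemma~\ref{lem:number components} gives $\Hom(\id_\CD,\id_\CD) = \BC\,\id$, so the whole machinery of Theorem~\ref{lem:obstruction to action} applies and the obstruction $c \in H^3(\BZ_n,\BC^{\ast}) \cong \BZ_n$ is well-defined. The key step is to normalize the data using $S$. Choose an isomorphism $\theta_{g,h}\colon \rho_g\rho_h \to \rho_{gh}$ for each pair of powers; by Lemma~\ref{rmk:Ploog_simple} (or directly, since $S$ is simple and $\BZ_n$-invariant) $S$ admits a $\BZ_n$-linearization, i.e.\ a family $\phi = (\phi_{g^i})$ satisfying the cocycle condition \eqref{compatibility}. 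The existence of such a linearization is exactly the statement that the restriction of the obstruction data to $S$ is coherent. The plan is then to transport this coherence from $S$ to all of $\CD$: because $\Hom(\id_\CD,\id_\CD) = \BC$, the scalar $c(g^i,g^j,g^k)$ measuring the failure of associativity is computed by evaluating \eqref{eq:sample} on any object, in particular on $S$, and the linearization $\phi$ of $S$ forces that scalar to be trivial.

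Concretely, the main step is to rescale the coherence isomorphisms $\theta_{g^i,g^j}$ by scalars so that they agree with the linearization of $S$. Using $\phi_{g^i}\colon S \to \rho_{g^i}S$, the composite in \eqref{eq:sample} evaluated at $S$ is, via the compatibility \eqref{compatibility}, the identity of $S$ up to the scalar $c(g^i,g^j,g^k)$; but the linearization being a genuine cocycle means this scalar is $1$. Since the scalar is object-independent (as $\Hom(\id_\CD,\id_\CD)=\BC\,\id$), we conclude $c \equiv 1$, so the class $c \in H^3(\BZ_n,\BC^{\ast})$ vanishes and by Theorem~\ref{lem:obstruction to action}(a) an action exists. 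Alternatively, and perhaps more cleanly, I would phrase this as: a $\BZ_n$-linearizable simple object provides a section that splits the obstruction, because a linearization is precisely a coherent choice trivializing the $2$-cocycle $\theta$ on that object, and indecomposability propagates this to all of $\CD$.

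The hard part will be making rigorous the claim that a linearization of a \emph{single} simple object suffices to trivialize the obstruction on the \emph{entire} category. The subtlety is that \eqref{eq:sample} defines $c(g,h,k)$ as a scalar via $\Hom(\id_\CD,\id_\CD)=\BC$, and one must check that evaluating this universal scalar on the specific object $S$ genuinely computes it — this requires that the natural transformation underlying $c$, being a multiple of the identity, is determined by its value at any nonzero object, which holds precisely because $S$ is simple and nonzero and the transformation is a global scalar. I expect the cleanest route is to invoke Lemma~\ref{rmk:Ploog_simple} directly: the existence of a $\BZ_n$-linearization of the simple object $S$ is governed by a class in $H^2(\BZ_n,\BC^{\ast})$, and I would argue that the vanishing of this $H^2$-obstruction, combined with the relationship between the $H^2$ and $H^3$ obstructions under the choice of generator, forces the $H^3$-class to vanish. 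Verifying this compatibility between the linearizability of $S$ and the global action obstruction is the technical crux of the argument.
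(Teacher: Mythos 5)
Your strategy---kill the $H^3$-obstruction of Theorem~\ref{lem:obstruction to action} directly using the invariant simple object---is a genuinely different route from the paper's, and it \emph{can} be made to work; but as written the argument is circular at exactly the point you yourself flag as the crux. A ``$\BZ_n$-linearization'' of $S$, i.e.\ a family $\phi_{g^i}$ satisfying \eqref{compatibility}, is only defined relative to coherence data $\theta$ that already satisfies the associativity \eqref{associativity}: Lemma~\ref{rmk:Ploog_simple} presupposes an \emph{action} $(\rho,\theta)$ of the group on $\CD$ and produces an $H^2$-obstruction to linearizing $S$ for \emph{that} action; it says nothing when all that is given is the subgroup $\BZ_n \subset \Aut\CD$ together with arbitrary, not-yet-coherent choices of $\theta_{g^i,g^j}$. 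So the step ``$S$ admits a $\BZ_n$-linearization, hence $c \equiv 1$'' assumes what is to be proved. Worse, the conclusion is too strong: for an arbitrary choice of $\theta$ the cocycle $c$ is in general \emph{not} identically $1$ (it can at best be a coboundary), so no argument can show that a genuine linearization of $S$ exists for those $\theta$.

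The missing idea that closes the gap is to work with the \emph{failure} of \eqref{compatibility} rather than with an actual linearization. Choose arbitrary isomorphisms $\psi_i \colon S \to \rho_{g^i}S$ (these exist because $S$ is invariant) and arbitrary $\theta$'s. Since $S$ is simple and $\rho_{g^{i+j}}S \cong S$, the space $\Hom(S,\rho_{g^{i+j}}S)$ is one-dimensional, so there are unique scalars $\mu(i,j) \in \BC^{\ast}$ with
\[
\theta_{g^i,g^j}^S \circ \rho_{g^i}(\psi_j)\circ\psi_i \;=\; \mu(i,j)\,\psi_{i+j}.
\]
Evaluating the two compositions around \eqref{associativity} at the object $S$, and using the naturality of $\theta_{g^i,g^j}$ applied to the morphism $\psi_k \colon S \to \rho_{g^k}S$ (note that \eqref{eq:sample} involves $\theta_{g^i,g^j}$ at the object $\rho_{g^k}S$, not at $S$, so this naturality step is essential), one finds
\[
c(g^i,g^j,g^k) \;=\; \mu(i,j)\,\mu(i+j,k)\,\mu(j,k)^{-1}\,\mu(i,j+k)^{-1},
\]
where one uses $\Hom(\id_{\CD},\id_{\CD}) = \BC\,\id$ (supplied, as you say, by Lemma~\ref{lem:number components} from indecomposability and the stability condition) to pass from the value at the nonzero object $S$ to the global scalar. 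Thus $c$ is precisely a coboundary, it vanishes in $H^3(\BZ_n,\BC^{\ast})$, and Theorem~\ref{lem:obstruction to action}(a) produces the action; equivalently, rescaling $\theta_{g^i,g^j}$ by $\mu(i,j)^{-1}$ makes $\psi$ a genuine linearization and the rescaled $\theta$'s associative. Note that this repaired argument uses neither cyclicity of the group nor dg-enhancements, so it is both more elementary and more general than the paper's proof, which proceeds entirely differently: it first invokes part (c) of Theorem~\ref{lem:obstruction to action} to obtain a $\BZ_{n^2}$-action, cuts it down to a faithful $\BZ_m$-action with image $\BZ_n$ via Theorem~\ref{prop:faithful action}, and only then uses the invariant simple object---through Lemma~\ref{rmk:Ploog_simple}, now legitimately, since an action exists and $H^2(\BZ_m,\BC^{\ast})=0$---together with the explicit form of the forgetful functor to force $m=n$.
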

\begin{proof}
Using part (c) of Theorem~\ref{lem:obstruction to action},
we have an action of $\BZ_{n^2}$ on $\CD$ 
such that the induced map $\BZ_{n^2} \to \Aut \CD$ is the quotient map to the given subgroup $\BZ_n \subset \Aut \CD$.

Since the $\BZ_{n^2}$-action preserves the stability condition and $D^b(\CA) \cong \CD$,
it lifts to an action on a dg-enhancement of $\CD$.
Applying Theorem~\ref{prop:faithful action}
we hence find a faithful action by some $\BZ_{m}$
such that its image in $\Aut \CD$ is the subgroup $\BZ_n$.

We will show that $m=n$. Let $k = m/n$ and consider the short exact sequence
\[ 0 \to \BZ_k \to \BZ_m \to \BZ_n \to 0. \]
The image of $\BZ_k$ in $\Aut \CD$ is the trivial group. Since $H^2(\BZ_k, \BC^{\ast}) = 0$ we see that the $\BZ_m$-action on $\CD$ restricts to the trivial action by $\BZ_k$ (or, more precisely, to an action which is isomorphic to the trivial action).
We consider the induced action of $\BZ_n$ on the equivariant category
\begin{equation} \CD_{\BZ_k} = \bigoplus_{i=1}^{k} \CD \otimes V_i, \label{dfsdf11} \end{equation}
where $V_i$ are the irreducible representations of $\BZ_k$.

Since $\CD_{\BZ_m}$ is indecomposable, $\BZ_n$ acts transitively on the summands \eqref{dfsdf11}.
Let $\BZ_l \subset \BZ_n$ be the stabilizer of the first summand.
In particular, $\BZ_k = \BZ_n / \BZ_l$.
Applying Proposition~\ref{rem:successive_quotients} and Lemma~\ref{lem:transitive action} we obtain the equivalence
\[ 
\CD_{\BZ_m} \cong \left( \CD_{\BZ_k} \right)_{\BZ_{n}} 
\cong  \left( \bigoplus_{i} \CD \otimes V_i \right)_{\BZ_{n}}
\cong (\CD \otimes V_1)_{\BZ_{\ell}}.
\]
Moreover, under this equivalence the forgetful functor $p \colon \CD_{\BZ_m} \to \CD$ is given by
sending $(E \otimes V_1,\phi) \in (\CD \otimes V_1)_{\BZ_{\ell}}$ to
\begin{equation} \label{pZk}
p_{\BZ_k} \left( \bigoplus_{g \in \BZ_n/\BZ_l} gE \otimes V_1 \right)
= \bigoplus_{g \in \BZ_n/\BZ_l} g E.
\end{equation}

On the other hand, by the assumption that $\BZ_n \subset \Aut \CD$ fixes some simple object,
there is a simple object $F \in \CD$ which is invariant under the $\BZ_m$-action.
By Lemma~\ref{rmk:Ploog_simple} the object $F$ admits a linearization with respect to $\BZ_m$ and hence lies in the image of the forgetful functor from the equivariant category $\CD_{\BZ_m}$.
In other words, $F$ is of the form \eqref{pZk}.
This implies that $\BZ_n/\BZ_l$ is trivial, so $k=1$.
\end{proof}

\subsection{The equivariant category of Example~\ref{subsec:does not act}}
\label{subsec:equiv cat of not act example}
We return to Example~\ref{subsec:does not act}.
Recall that there we considered a subgroup
\[ \BZ_2 \subset \Aut \CD' \]
generated by an involution $g \colon \CD' \to \CD'$
which does not act on $\CD'$. Note that by Corollary~\ref{cor:inv_simple_cyclic} this implies that there is no simple object in $\CD'$ which is invariant under the action of $\BZ_2$. 

By part (c) of Theorem~\ref{lem:obstruction to action}
the involution $g$ defines a (unique) $\BZ_4$-action on $\CD'$.
Here we determine the associated equivariant category.
Let us assume that the action preserves a stability condition
and lifts to a dg-enhancement of $\CD'$.

\vspace{4pt}
\noindent \textbf{Claim.} We have an equivalence $\Phi \colon \CD' \xrightarrow{\cong} \CD'_{\BZ_4}$.
The composition of $\Phi$ with the forgetful functor is given by $(p \circ \Phi)(E) = E \oplus gE$.

\begin{proof}[Proof of Claim 2]
We first show that the equivariant category $\CD'_{\BZ_4}$ is indecomposable:
By Proposition~\ref{prop:hom_components_equiv} and Lemma~\ref{lem:t isomorphism}
this reduces to showing that 
\[ \Hom(\id, \rho_{g^2})^{\BZ_4} = 0. \]
However, we have seen in Example~\ref{subsec:does not act} that the generator
of this vector space
\[ t \colon \id \xrightarrow{\cong} \rho_{g^2} \]
is not $G$-invariant. Hence $\Hom(\id, \rho_{g^2})^{\BZ_4} = 0$ and $\CD'_{\BZ_4}$ is indecomposable.

Since $H^2(\BZ_2, \BC^{\ast}) = 0$, the restriction of the $\BZ_4$-action to the subgroup $\BZ_2 \subset \BZ_4$ is trivial.
An application of Proposition~\ref{rem:successive_quotients} 
gives
\[ \CD'_{\BZ_4} \cong \left( \CD'_{\BZ_2} \right)_{\BZ_4/\BZ_2} = \left( \CD' \oplus \CD' \right)_{\BZ_4/\BZ_2}. \]
Since $\CD'_{\BZ_4}$ is indecomposable, $\BZ_4/\BZ_2$ acts transitively on the two summands.
The claim now follows from Lemma~\ref{lem:transitive action}.
\end{proof}

Recall the concrete example discussed at the end of Section~\ref{subsec:equiv cat of not act example}. 
We have seen that the involution $\CL_b \otimes t_a^{\ast}( - )$ on $D^b(E')$
does not define an action of $\BZ_2$,
but only a $\BZ_4$-action.
By the above claim we see that
\[ D^b(E')_{\BZ_4} \cong D^b(E'). \]
Another example is described in \cite[Sec.\ 7]{BeckmannOberdieckModuli}.

\section{The Serre functor on equivariant categories} 
\label{subsec:Serrefunctor}
Let $\CD$ be a $\BC$-linear triangulated category with finite-dimen\-sional Hom's.

A Serre functor for $\CD$ is an equivalence $S \colon \CD \to \CD$ together with
a collection of bifunctorial isomorphisms 
\[ \eta_{A,B} \colon \Hom(A,B) \xrightarrow{\cong} \Hom(B, SA)^{\vee} \]
for all objects $A,B$.
We write 
$\langle f,f' \rangle$
for the pairing of $f \in \Hom(A,B)$ with $f' \in \Hom(B,SA)$.
The functoriality in $A$ is equivalent to
\begin{equation} \label{func_A}
\langle f \circ \psi, f' \rangle = \langle f, S \psi \circ f' \rangle
\end{equation}
for all $f \in \Hom(A',B)$, $\psi \colon A \to A'$ and $f' \in \Hom(B, SA)$.
The functoriality in $B$ is equivalent to
\begin{equation} \label{func_B} \langle \rho \circ f, f' \rangle = \langle f, f' \circ \rho \rangle \end{equation}
for all $f \in \Hom(A,B)$, $f' \in \Hom(B', SA)$ and $\rho \colon B \to B'$.
The following is well-known.
\begin{lemma} \label{lemma123}
For every equivalence $F \colon \CD\to \CD$ there exist a canonical 2-isomorphism $t_F \colon SF \xrightarrow{\cong} FS$ with the following properties:
\begin{enumerate}
\item[(a)] For all equivalences $F,G \colon \CD \to \CD$ the following diagram commutes:
\[
\begin{tikzcd}
S F G \ar{r}{t_F G} \ar[bend right]{rr}{t_{FG}} & F S G \ar{r}{F t_G} & F G S.
\end{tikzcd}
\]
\item[(b)] $\langle f,f' \rangle = \langle F f, (t_{F}^A)^{-1} \circ F f' \rangle$ for all $f \in \Hom(A,B)$ and $f' \in \Hom(B,SA)$.

\end{enumerate}
\end{lemma}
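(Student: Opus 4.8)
The plan is to take property (b) as the \emph{defining} property of $t_F$ and to verify that it determines a unique natural isomorphism. First I would record that $\eta$ furnishes perfect pairings $\langle\cdot,\cdot\rangle\colon\Hom(A,B)\times\Hom(B,SA)\to\BC$ for all $A,B$. Fixing $A$, I chain three isomorphisms that are natural in $B$: the perfect pairing for the pair $(A,B)$, the isomorphism $\Hom(A,B)\xrightarrow{\cong}\Hom(FA,FB)$, $f\mapsto Ff$, coming from full faithfulness of $F$, and the perfect pairing for the pair $(FA,FB)$. This produces a natural-in-$B$ isomorphism $\Theta_B\colon\Hom(B,SA)\xrightarrow{\cong}\Hom(FB,SFA)$ characterised by $\langle f,f'\rangle=\langle Ff,\Theta_B(f')\rangle$. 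Comparing $\Theta$ with the isomorphism $\Hom(B,SA)\xrightarrow{\cong}\Hom(FB,FSA)$, $f'\mapsto Ff'$, and using that $F$ is essentially surjective, the composite $\Theta_B\circ F^{-1}$ defines a natural isomorphism of the representable functors $\Hom(-,FSA)\cong\Hom(-,SFA)$; by the Yoneda lemma it is induced by a unique isomorphism $FSA\xrightarrow{\cong}SFA$, which I declare to be $(t_F^A)^{-1}$. Property (b) then holds by construction, and the non-degeneracy of the pairings shows that it characterises $t_F^A$ uniquely.

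Next I would verify that $t_F^A$ is natural in $A$, so that it assembles into a 2-isomorphism $t_F\colon SF\to FS$. This is a diagram chase fed by the bifunctoriality of the Serre pairing, i.e.\ by the relations \eqref{func_A} and \eqref{func_B}, which encode exactly the compatibility of the pairings with pre- and post-composition needed to propagate $\Theta$ through a morphism $A\to A'$.

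For part (a) I would exploit the uniqueness just established rather than construct $t_{FG}$ again. Evaluating property (b) for $G$ at the object $A$ and then property (b) for $F$ at the object $GA$ (applied to $Gf$ and $(t_G^A)^{-1}Gf'$) and chaining the two identities gives
\[
\langle f,f'\rangle
=\langle FGf,\ (t_F^{GA})^{-1}\circ F((t_G^A)^{-1})\circ FGf'\rangle .
\]
Using $F((t_G^A)^{-1})=(Ft_G^A)^{-1}$ and the identity $(t_F^{GA})^{-1}\circ(Ft_G^A)^{-1}=(F(t_G^A)\circ t_F^{GA})^{-1}$, the right-hand side is precisely the expression in property (b) for $FG$ with $t_{FG}^A$ replaced by $F(t_G^A)\circ t_F^{GA}$. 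Uniqueness of the morphism satisfying (b) then forces $t_{FG}^A=F(t_G^A)\circ t_F^{GA}$, which is the commutativity asserted in (a).

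The main obstacle I expect is bookkeeping rather than ideas: one must track variances and keep straight which pairing --- for $(A,B)$ or for $(FA,FB)$ --- is in play at each step, and the naturality in $A$ must be extracted carefully from \eqref{func_A}--\eqref{func_B}. The inversion and composition identities in part (a) also demand care, since a single misplaced inverse would interchange $t_F$ with $t_F^{-1}$ and destroy the cocycle relation.
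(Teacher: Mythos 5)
Your proof is correct, but it organizes the argument differently from the paper. The underlying construction of $t_F$ is the same Serre-duality-plus-Yoneda chain (the paper uses $\Hom(B,SFA)\cong\Hom(FA,B)^{\vee}\cong\Hom(A,F^{-1}B)^{\vee}\cong\Hom(F^{-1}B,SA)\cong\Hom(B,FSA)$, while you run the equivalent chain through $\Hom(FA,FB)^{\vee}$ and invoke essential surjectivity instead of $F^{-1}$), but the logical flow is reversed: you take the pairing identity (b) as the \emph{defining} characterization of $t_F$, establish uniqueness from non-degeneracy, and then deduce the cocycle property (a) purely formally, by showing that $F t_G^A\circ t_F^{GA}$ satisfies the same characterization as $t_{FG}^A$. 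The paper instead constructs $t_F$ first, proves (a) by unwinding the composite adjunction chain under $\Hom(B,-)$, and proves (b) by dualizing the chain and tracing the element $\eta_{FA,FB}(Ff)$ through it. Your route buys a cleaner part (a) — no re-examination of the four-step chain for $FG$ is needed — and makes the canonicity of $t_F$ explicit; the computation you chain together for (a) is correct, including the inversion bookkeeping $(t_F^{GA})^{-1}\circ(Ft_G^A)^{-1}=(Ft_G^A\circ t_F^{GA})^{-1}$. What the paper's route buys is that its defining chain is functorial in $A$ and $B$ simultaneously, so naturality of $t_F$ in $A$ is immediate, whereas in your setup (where $A$ is fixed and only naturality in $B$ is built in) this requires the separate diagram chase you only sketch; that chase does go through, using \eqref{func_A}, \eqref{func_B} and your characterization evaluated at $f'=\id_{SA}$. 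Two small points you should make explicit: your uniqueness argument needs fullness of $F$ (so that $Ff$ ranges over all of $\Hom(FA,FB)$) together with the evaluation at $B=SA$, $f'=\id_{SA}$; and the extension of $\Theta_B\circ F^{-1}$ from objects in the essential image of $F$ to a natural isomorphism $\Hom(-,FSA)\cong\Hom(-,SFA)$ on all of $\CD$ uses that precomposition with an equivalence induces an equivalence of functor categories.
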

\begin{proof}
For any $A,B \in \CD$ we have the chain of isomorphisms
\begin{multline}\label{eq:comp_serre}
\Hom(B, SFA) \cong \Hom( FA, B)^{\vee} \cong \Hom(A, F^{-1} B)^{\vee} \\ \cong \Hom( F^{-1} B, SA) \cong \Hom( B, F S A)
\end{multline}
where we applied Serre duality in the first and third, and $F$ and $F^{-1}$ in the second and fourth step respectively.
Since Serre duality and application of $F$ is functorial in both arguments, the isomorphisms are functorial in both $A$ and $B$.
By the Yoneda lemma this gives the desired natural transformation $t_F$.
For the functoriality of $t$ in $F$ we need to show that for every $A \in \CD$ we have
the commutative diagram
\[
\begin{tikzcd}
S F G (A) \ar{r}{t_F(G A)} \ar[bend right]{rr}{t_{FG}(A)} & F S G(A) \ar{r}{F(t_G(A))} & F G S(A).
\end{tikzcd}
\]
This is checked by applying $\Hom(B, - )$. The adjunctions used to define the composition yield precisely the adjunction to define the curved arrow. This shows (a). 

For (b) we dualize (\ref{eq:comp_serre}) and replace $B$ with $FB$, i.e.\
\begin{multline*}
\Hom(FB, SFA)^\vee \cong \Hom( FA, FB) \cong \Hom(A, B) \\ \cong \Hom(B, SA)^\vee \cong \Hom( F B, F S A)^\vee.
\end{multline*}
If we apply this chain of isomorphisms to $\eta_{FA,FB}(Ff)$, we obtain
\[ \eta_{FA,FB}(Ff) \mapsto Ff \mapsto f \mapsto \eta_{A,B}(f) \mapsto F\eta_{A,B}(f). \]
By definition, the resulting dual map $\tilde{f} \coloneqq F\eta_{A,B}(f)$ satisfies
\[
\tilde{f} (g) =\langle f,F^{-1}g\rangle
\]
for all $g\in \Hom(FB,FSA)$. Applied to $g=Ff'$ this yields
\[
\tilde{f} (Ff') = \langle f,f'\rangle.
\]
On the other hand, we have by construction $\tilde{f} = ((t_F^A)^{-1})^{\vee} \left( \eta_{FA,FB}(Ff)\right)$. Thus
\begin{align*}
\langle Ff,(t_F^A)^{-1} \circ Ff'\rangle &= \eta_{FA,FB}(Ff) ((t_F^A)^{-1} \circ Ff')\\
&= \left( ((t_F^A)^{-1})^{\vee} \left( \eta_{FA,FB}(Ff) \right) \right) (Ff')\\
&=\tilde{f}(Ff')\\
&=\langle f,f'\rangle
\end{align*}
as desired.
\end{proof}

Let $G$ be a finite group acting on $\CD$.
Given an object $(A, \phi)$ in $\CD_G$ we claim that the collection
\begin{equation} \label{new_G_lin}
\phi'_g \colon SA \xrightarrow{S \phi_g} S gA \xrightarrow{t_g^A} g SA
\end{equation}
for all $g \in G$ 
is a $G$-linearization of $SA$.
Indeed, consider the diagram
\[
\begin{tikzcd}
SA \ar{r}{S\phi_g} \ar[swap]{dr}{S\phi_{gh}} & SgA \ar{d}{Sg\phi_h} \ar{r}{t_g^A} & g SA \ar{d}{g S\phi_h} \\
& Sgh A \ar{r}{t_g^{hA}} \ar[swap]{dr}{t_{gh}^A} & g ShA \ar{d}{g t_h^A} \\
& & gh SA.
\end{tikzcd}
\]
The relation we need to check (given in \eqref{compatibility}) is the commutativity of the outer triangle.
Since $t_g$ is a natural transformation the upper right square commutes.
The upper left triangle commutes since it is obtained by applying $S$ to the diagram \eqref{compatibility} for the linearization $\phi$.
The lower right triangle commutes by the functoriality of $t_F$ in $F$ implied by Lemma~\ref{lemma123}.

For any morphism $(A, \phi) \to (B,\psi)$ given by a morphism $\alpha \colon A \to B$ in $\CD$ 
the morphism $S \alpha \colon SA \to SB$ is $G$-invariant with respect to the the $G$-linearizations of $SA$ and $SB$ just defined. Hence we obtain an equivariant morphism $S\alpha \colon (SA, \phi') \to (SB,\psi')$. This yields a functor
\[ \tilde{S} \colon \CD_G \to \CD_G, \ (A,\phi) \mapsto (SA, \phi'), \ \alpha \mapsto S(\alpha) \]
which by construction satisfies $p \tilde{S} = S p$.

\begin{prop}
The functor $\tilde{S}$ together with the restriction of $\eta_{A,B}$ to the $G$-invariant part
defines a Serre functor on $\CD_G$.
Equivalently for any two objects $(A,\phi)$ and $(B, \psi)$ in $\CD_G$ we have bifunctorial isomorphisms
\[ \eta_{A,B} \colon \Hom(A,B)^G \xrightarrow{\cong} ( \Hom( B, SA )^G )^{\vee} \]
where the $G$-action on the left is defined by the linearizations $\phi, \psi$
and the $G$-action on the right is defined by the linearizations $\psi$ and $\phi'$ (as in \eqref{new_G_lin}).
\end{prop}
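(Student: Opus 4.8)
The plan is to verify that $\tilde{S}$ equipped with the restricted pairing satisfies the defining axioms of a Serre functor on $\CD_G$. I would first check that $\tilde{S}$ is an equivalence. This is clear because $\tilde{S}$ covers $S$ under the forgetful functor (we constructed it so that $p\tilde{S} = Sp$), and $S$ is an equivalence; concretely, an inverse is built by the same recipe applied to the inverse Serre functor $S^{-1}$, using the inverse transformations $t_g^{-1}$ in place of the $t_g$. The main content is therefore to show that the pairing $\eta_{A,B}$ descends to the $G$-invariants and remains a perfect pairing there, and that the functoriality identities \eqref{func_A} and \eqref{func_B} continue to hold on $\CD_G$.

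The key step is to show that $\eta_{A,B}$ restricts to a perfect pairing
\[
\Hom((A,\phi),(B,\psi)) \xrightarrow{\cong} \Hom((B,\psi),\tilde{S}(A,\phi))^{\vee}.
\]
Recall $\Hom_{\CD_G}((A,\phi),(B,\psi)) = \Hom_{\CD}(A,B)^G$ where $G$ acts on $\Hom_\CD(A,B)$ by $f \mapsto \psi_g^{-1}\circ \rho_g(f)\circ \phi_g$, and similarly $\Hom_{\CD_G}((B,\psi),\tilde{S}(A,\phi)) = \Hom_\CD(B,SA)^G$ for the action defined by $\psi$ and $\phi'$. Since $G$ is finite and we work over $\BC$, taking $G$-invariants is exact and the perfect pairing $\eta_{A,B}$ on the full Hom-spaces restricts to a perfect pairing on invariants \emph{provided} $\eta_{A,B}$ is $G$-equivariant, i.e.\ it identifies the $G$-action on $\Hom_\CD(A,B)$ with the dual of the $G$-action on $\Hom_\CD(B,SA)$. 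This equivariance is exactly what property (b) of Lemma~\ref{lemma123} delivers: applying the identity $\langle f,f'\rangle = \langle \rho_g f, (t_g^A)^{-1}\circ \rho_g f'\rangle$ together with the functoriality relations \eqref{func_A} and \eqref{func_B} to absorb the linearizations $\phi_g,\psi_g$ yields
\[
\langle \psi_g^{-1}\rho_g(f)\phi_g,\, f'\rangle = \langle f,\, \phi_g'^{\,-1}\circ\rho_g(f')\circ\psi_g\rangle,
\]
which says precisely that the two $G$-actions are dual under $\eta$. Here one must unwind that $\phi'_g = t_g^A\circ S\phi_g$ combines with $(t_g^A)^{-1}$ from Lemma~\ref{lemma123}(b) to reproduce the linearization defining the $G$-action on the right-hand space.

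The main obstacle I expect is this bookkeeping verification that $\eta_{A,B}$ intertwines the two $G$-actions, since it requires carefully composing Lemma~\ref{lemma123}(b) with both functoriality identities and inserting the definition \eqref{new_G_lin} of $\phi'$ at the right places; the natural-transformation property of $t_g$ is what guarantees the pieces fit together. Once $G$-equivariance of $\eta$ is established, perfectness on invariants is automatic by averaging (semisimplicity of $\BC[G]$), and the bifunctoriality of the restricted $\eta$ over $\CD_G$ follows immediately from the bifunctoriality of $\eta$ over $\CD$, since morphisms in $\CD_G$ are in particular morphisms in $\CD$ and the identities \eqref{func_A}, \eqref{func_B} hold verbatim. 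This completes the proof that $(\tilde{S},\eta)$ is a Serre functor on $\CD_G$.
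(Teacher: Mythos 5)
Your overall strategy coincides with the paper's: show that $\eta_{A,B}$ intertwines the $G$-action on $\Hom(A,B)$ with the \emph{dual} of the $G$-action on $\Hom(B,SA)$, then deduce perfectness on invariants from semisimplicity of $\BC[G]$; bifunctoriality and the fact that $\tilde{S}$ is an equivalence are indeed routine. However, the key identity you display is false as stated, and this is a genuine gap rather than a typo. You claim
\[
\langle \psi_g^{-1}\rho_g(f)\phi_g,\, f'\rangle = \langle f,\, (\phi'_g)^{-1}\circ\rho_g(f')\circ\psi_g\rangle ,
\]
i.e.\ $\langle g\cdot f, f'\rangle = \langle f, g\cdot f'\rangle$. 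That is \emph{not} what duality of the two actions means: duality is $\langle g\cdot f, g\cdot f'\rangle = \langle f,f'\rangle$, equivalently $\langle g\cdot f, f'\rangle = \langle f, g^{-1}\cdot f'\rangle$, with an inverse on one side; this is exactly the identity the paper proves ($\langle f, g\cdot f'\rangle = \langle g^{-1}\cdot f, f'\rangle$). The two conditions genuinely differ: holding simultaneously they would force $g^{2}$ to act trivially on all Hom-spaces. A concrete counterexample to your version: take $\CD = D^b(\mathrm{pt})$ with $S=\id$ and the trace pairing $\langle f,f'\rangle = \Tr(f'\circ f)$, let $G=\BZ_4$ act trivially on $\CD$, and take $A=B=\BC$ with linearizations $\phi_g = i$ and $\psi_g = 1$. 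Then $g\cdot f = i f$ on $\Hom(A,B)$ and $g\cdot f' = -i f'$ on $\Hom(B,SA)$, so for $f=f'=1$ your identity reads $i=-i$, while the correct identity holds by cyclicity of the trace.

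Because of this, the verification you defer as "bookkeeping" would fail if attempted for your displayed formula; and that verification is in fact the entire content of the paper's proof. It is a chain of equalities: first use \eqref{func_A} and \eqref{func_B} to absorb $\phi_g^{-1}$ and $\psi_g$ into the left argument, then apply Lemma~\ref{lemma123}(b) with $F=\rho_{g^{-1}}$, and finally use Lemma~\ref{lemma123}(a) to cancel $(t_{g^{-1}}^{gA})^{-1}\circ \rho_{g^{-1}}(t_g^A)^{-1}$ together with the cocycle relations $\rho_{g^{-1}}(\psi_g) = \psi_{g^{-1}}^{-1}$ and $\rho_{g^{-1}}(\phi_g^{-1}) = \phi_{g^{-1}}$, yielding $\langle f, g\cdot f'\rangle = \langle g^{-1}\cdot f, f'\rangle$. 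Once the identity is corrected, the rest of your argument (averaging over $G$, bifunctoriality inherited from $\CD$, invertibility of $\tilde{S}$) goes through and agrees with the paper. Incidentally, your incorrect identity would, if true, also yield perfectness on invariants by the same averaging argument -- which is probably why the slip is easy to overlook -- but it is not the identity that Lemma~\ref{lemma123} and the functoriality relations produce.
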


\begin{proof}
The action of $g\in G$ on $\Hom(B,SA)$ defined by $\psi$ and $\phi'$ is 
\[ f' \mapsto S (\phi_g)^{-1} \circ (t_{g}^A)^{-1} \circ gf' \circ \psi_g. \]
Hence for any $f \in \Hom(A,B)$ we obtain
\begin{align*} \langle f, g \cdot f' \rangle 
& = \langle f,  S (\phi_g)^{-1} \circ (t_{g}^A)^{-1} \circ gf' \circ \psi_g \rangle\\
& = \langle \psi_g \circ f \circ \phi_g^{-1}, (t_{g}^A)^{-1} \circ gf' \rangle\\
& = \langle g^{-1} \psi_g \circ g^{-1} f \circ g^{-1} (\phi_g^{-1}),\, (t_{g^{-1}}^{gA})^{-1} \circ g^{-1}(t_{g}^A)^{-1} \circ f' \rangle \\
& = \langle (\psi_{g^{-1}})^{-1} \circ g^{-1} f \circ \phi_{g^{-1}},\, f' \rangle \\
& = \langle g^{-1} \cdot f, f' \rangle
\end{align*}
where we have used \eqref{func_A} and \eqref{func_B} in the second and Lemma~\ref{lemma123} (b) and (a) in the third and fourth equality respectively.
Hence the action on $\Hom(B,SA)$ with respect to $\psi, \phi'$ is dual to the action on $\Hom(A,B)$ with respect to $\phi, \psi$.
This implies the claim.
\end{proof}


\section{Hochschild cohomology} \label{sec:cohomology}
In this section we discuss how Hochschild cohomology can be used to describe equivariant categories.
The definition requires us to work with enhanced categories, for which we take as model dg-categories (another, in characteristic $0$ equivalent, choice would be stable $\infty$ categories
as used in the work of Perry \cite{Perry}).

Throughout let $\CC$ be a pre-triangulated dg-category over $\BC$.

\subsection{Definition} Let $\mathrm{Func}(\CC, \CC)$ be the dg-category of functors $\CC \to \CC$ where morphisms are natural transformations.
The Hochschild cohomology of $\CC$ with values in a functor $\phi \colon \CC \to \CC$ is defined by
\[ \HH^{\bullet}(\CC, \phi) \coloneqq \bigoplus_{i \in \BZ} \Hom_{\mathrm{Func}(\CC, \CC)}( \id_{\CC}, \phi[i]) [-i]. \]
The (absolute) Hochschild cohomology of $\CC$ is
\[ \HH^{\bullet}(\CC) = \HH^{\bullet}(\CC, \id_{\CC}). \]

\subsection{Equivariant category}
Every equivalence $F \colon \CC\to \CC$ induces (functorially) a morphism on Hochschild cohomology by conjugation,
\[ F_{\ast} \colon ( \id_{\CC} \xrightarrow{t} \id_{\CC}[i] ) \mapsto ( F \id_{\CC} F^{-1} \xrightarrow{F t F^{-1}} F \id_{\CC}[i] F^{-1}) \cong (\id_{\CC} \xrightarrow{F t F^{-1}} \id_{\CC}[i]). \]
In particular, a group action on $\CC$ induces a group action on Hochschild cohomology.

By work of Perry, we have the following description of the Hochschild cohomology of the equivariant category $\CC_G$.
\begin{thm} \textup{(}\cite[Thm.\ 4.4]{Perry}\textup{)} \label{lemma124} \label{lem:perry}
Let a finite group $G$ act on $\CC$ via the equivalences $\rho_g \colon \CC \to \CC$ for all $g \in G$. Then we have
\begin{equation}
\HH^{\bullet}(\CC_G) = \left( \bigoplus_{g \in G} \HH^{\bullet}(\CC, \rho_g) \right)^G \label{hh:equivaraint cat}
\end{equation}
where $G$ acts on the right by conjugation.
\end{thm}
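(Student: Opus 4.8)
The statement is attributed to Perry, and for a self-contained argument the plan is to upgrade Proposition~\ref{prop:hom_components_equiv} from degree $0$ to all cohomological degrees, carrying out the very same reduction but inside the dg-category $\Func(\CC,\CC)$. Unwinding the definition, it suffices to compute the mapping complex $\Hom_{\Func(\CC_G,\CC_G)}(\id_{\CC_G},\id_{\CC_G})$ together with its internal grading, since $\HH^{\bullet}(\CC_G)=\bigoplus_i \Hom_{\Func(\CC_G,\CC_G)}(\id_{\CC_G},\id_{\CC_G}[i])[-i]$. The structural inputs are the dg-enhanced versions of the universal properties of Propositions~\ref{prop:UP1} and~\ref{prop:UP2} (valid for pre-triangulated dg-categories, as indicated after Proposition~\ref{prop:UP2}), the two-sided adjunction between the forgetful functor $p\colon\CC_G\to\CC$ and the linearization functor $q\colon \CC\to\CC_G$ recorded in Section~\ref{subsec:indresfunc}, and the identity $pq\cong\bigoplus_{g\in G}\rho_g$.

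First I would use the adjunction attached to $q$ (the dg-analogue of Proposition~\ref{prop:UP2}) to identify $\Hom_{\Func(\CC_G,\CC_G)}(\id_{\CC_G},\id_{\CC_G})$ with the homotopy $G$-invariants of $\Hom_{\Func(\CC,\CC_G)}(q,q)$. Next I would apply the adjunction attached to $p$ (the dg-analogue of Proposition~\ref{prop:UP1}) to rewrite this as the homotopy $(G\times G)$-invariants of $\Hom_{\Func(\CC,\CC)}(pq,pq)$, where the two copies of $G$ act through the $q$-side and the $p$-side respectively. Finally, writing $pq=\bigoplus_{g\in G}\rho_g=\Ind_{G}^{G\times G}\id_{\CC}$ for the diagonal embedding $g\mapsto(g,g^{-1})$ and invoking the biadjunction $\Ind\dashv\mathrm{Res}$, the $(G\times G)$-invariants collapse to $\Hom_{\Func(\CC,\CC)}(\id_{\CC},\bigoplus_{g\in G}\rho_g)^{G}$, which is exactly $\bigl(\bigoplus_{g\in G}\HH^{\bullet}(\CC,\rho_g)\bigr)^{G}$. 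A bookkeeping check against the compatibility diagram~\eqref{isom conjugation} confirms that the residual $G$-action is the conjugation action.

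What makes all of this clean is that, since $G$ is finite and we work over $\BC$, the order $|G|$ is invertible, so homotopy fixed points agree with strict invariants: the averaging idempotent $\tfrac{1}{|G|}\sum_{g}g$ splits off the invariant summand and no higher group cohomology contributes. This is precisely why a single invariants functor $(-)^G$ appears on the right-hand side rather than a spectral sequence, and why the degree-$0$ computation of Proposition~\ref{prop:hom_components_equiv} propagates verbatim to every degree $i$. I expect the main obstacle to lie not in the formal manipulations above but in the careful setup of the enhanced universal properties at the level of mapping complexes: one must check that the adjunction identifications are genuine quasi-isomorphisms of functor complexes compatible with the $G$-actions, rather than mere identifications of cohomology groups. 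Alternatively, one can bypass this by following Perry's module-theoretic approach, realizing $\CC_G$ as modules over the group object $G$ in $\CC$ and extracting the \emph{twisted sector} decomposition indexed by $g\in G$; cf.\ \cite{Perry}.
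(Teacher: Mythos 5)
Your proposal is correct and follows essentially the same route as the paper: the paper cites Perry for this theorem, and its own sketch (given in the proof of Proposition~\ref{prop:hom_components_equiv}) is exactly your chain of adjunctions --- invariants via $q$, then via $p$, then collapsing the $(G\times G)$-invariants of $\Hom(pq,pq)$ through $\bigoplus_{g}\rho_g = \Ind_G^{G\times G}\id_{\CC}$ with the embedding $g\mapsto(g,g^{-1})$ --- which the paper states carries over verbatim to the dg-setting, precisely as you propose. Your observation that invertibility of $|G|$ over $\BC$ makes homotopy fixed points agree with strict invariants is the right justification for why the degree-$0$ argument propagates to all degrees without a spectral sequence.
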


The cohomology group
\[ \TTr_{\CC}(g) = \HH^{\bullet}(\CC, \rho_g) = \bigoplus_{i} \Hom_{\mathrm{Func}(\CC, \CC)}( \id_{\CC}, \rho_g[i]) [-i] \]
is called the categorical trace of $g$ with respect to the given $G$-action \cite{GK}.
If the element $h \in G$ commutes with $g$, then we have an induced action of $h$ on $\TTr_{\CC}(g)$. The $2$-characters of the representation are
\[ \chi_{\rho}(g,h) = \Tr( h | \TTr_{\CC}(g) ) \]
where the trace is taken in the supercommutative sense.\footnote{In physics language these are the $h$-twisted, $g$-twined characters of the representation.}
\begin{cor}
Assume $\HH^{\bullet}(\CC, \rho_g)$ is finite-dimensional for all $g$. Then
\[ e( \HH^{\bullet}(\CC_G) ) = \frac{1}{|G|} \sum_{gh=hg} \chi_{\rho}(g,h). \]
\end{cor}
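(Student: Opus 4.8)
The plan is to combine Perry's formula (Theorem~\ref{lem:perry}) with the fact that the Euler characteristic of the $G$-invariants of a finite-dimensional graded $G$-representation equals the supertrace of the averaging projector. Write
\[ W = \bigoplus_{g \in G} \TTr_{\CC}(g) = \bigoplus_{g \in G} \HH^{\bullet}(\CC, \rho_g), \]
a finite-dimensional $\BZ$-graded vector space carrying the conjugation $G$-action of Theorem~\ref{lem:perry}, so that $\HH^{\bullet}(\CC_G) \cong W^G$. Since the conjugation action is induced by equivalences, it preserves the cohomological grading; hence $G$ acts by degree-$0$ automorphisms and $e(-)$ is defined throughout.

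As $G$ is finite and $|G|$ is invertible in $\BC$, the averaging operator $P = \frac{1}{|G|} \sum_{h \in G} h$ is a degree-$0$ idempotent endomorphism of $W$ with image $W^G$. Being an idempotent, in each cohomological degree $P$ is a projector onto $(W^G)^i$, so its supertrace equals the Euler characteristic of its image. With $\Tr$ denoting the trace in the supercommutative sense this reads
\[ e\!\left( \HH^{\bullet}(\CC_G) \right) = e(W^G) = \Tr(P \mid W) = \frac{1}{|G|} \sum_{h \in G} \Tr(h \mid W). \]

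It remains to evaluate $\Tr(h \mid W)$ summand by summand. The element $h$ carries $\TTr_{\CC}(g)$ to $\TTr_{\CC}(hgh^{-1})$, so it permutes the summands of $W$ by conjugation on the index set, and only the summands it fixes contribute to the supertrace. The summand $\TTr_{\CC}(g)$ is fixed exactly when $hgh^{-1} = g$, i.e.\ when $gh = hg$, and in that case its contribution is $\Tr(h \mid \TTr_{\CC}(g)) = \chi_{\rho}(g,h)$ by the definition of the $2$-character. Hence $\Tr(h \mid W) = \sum_{g \colon gh = hg} \chi_{\rho}(g,h)$, and substituting into the previous display gives
\[ e\!\left( \HH^{\bullet}(\CC_G) \right) = \frac{1}{|G|} \sum_{h \in G} \sum_{\substack{g \in G \\ gh = hg}} \chi_{\rho}(g,h) = \frac{1}{|G|} \sum_{gh = hg} \chi_{\rho}(g,h), \]
as claimed.

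The only genuinely delicate point is the middle step, namely that passing to $G$-invariants commutes with the formation of the Euler characteristic. This is precisely the assertion that the supertrace of the idempotent $P$ computes $e(W^G)$, and it rests on Maschke's theorem: invertibility of $|G|$ guarantees that $W^G$ splits off $W$ as a graded direct summand in each degree, so that $P$ is genuinely a projector. The remaining bookkeeping — that conjugation permutes the summands and only the diagonal commuting terms survive in the supertrace — is routine.
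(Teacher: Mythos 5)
Your proof is correct and follows essentially the same route as the paper: the paper's proof simply invokes the averaging formula $\dim V^G = \frac{1}{|G|}\sum_{g} \Tr(g \mid V)$ and applies it to Perry's decomposition $\HH^{\bullet}(\CC_G) \cong \bigl( \bigoplus_{g} \HH^{\bullet}(\CC,\rho_g) \bigr)^G$, leaving implicit exactly the bookkeeping you spell out (degree-preservation of the conjugation action, the supertrace of the averaging projector computing the Euler characteristic of the invariants, and the fact that only the summands indexed by elements commuting with $h$ contribute to $\Tr(h \mid W)$, each contributing the $2$-character $\chi_{\rho}(g,h)$). Your write-up is a faithful, more detailed version of the paper's two-line argument.
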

\begin{proof}
If a finite group $G$ acts on a vector space $V$, then
\[ \dim V^G = \frac{1}{|G|} \sum_{g \in G} \Tr(g | V). \]
Applying this to the expression in Theorem~\ref{lemma124} yields the claim.
\end{proof}

In the decomposition \eqref{hh:equivaraint cat} an element $h$ acts by
$\HH^{\bullet}(\CC, \rho_g) \to \HH^{\bullet}(\CC, \rho_{hgh^{-1}})$.
Hence we may rewrite
\begin{equation} \HH^{\bullet}(\CC_G) = \bigoplus_{\Fc} V_{\Fc},\ \text{ where } \ V_{\Fc} = \left( \bigoplus_{g \in \Fc} \HH^{\bullet}(\CC, \rho_g) \right)^G \label{defn:Vc}\end{equation}
and $\Fc$ runs over all conjugacy classes of $G$.

Recall that the group of characters $G^{\vee}$ acts on the equivariant category $\CC_G$.
The following describes the induced action on $\HH^{\bullet}(\CC_G)$.
\begin{lemma} \label{lemma125}
For every $\chi \in G^{\vee}$ we have
$\chi|_{V_{\Fc}} = \chi(\Fc) \id_{V_{\Fc}}$.
\end{lemma}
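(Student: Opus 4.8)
The plan is to trace through the explicit form of Perry's isomorphism \eqref{hh:equivaraint cat} and compute the conjugation action of $\chi$ directly, exploiting that the auto-equivalence $\chi$ acts as the identity on underlying objects and morphisms and twists only the linearization.

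First I would recall how a class in $\HH^{\bullet}(\CC, \rho_g)$ produces an endomorphism of $\id_{\CC_G}$ under the identification of Theorem~\ref{lem:perry}. A natural transformation $t_g \colon \id_{\CC} \to \rho_g[i]$ evaluates on an object $E$ to a morphism $(t_g)^E \colon E \to \rho_g E[i]$, and composing with the inverse linearization $\phi_g^{-1}[i] \colon \rho_g E[i] \to E[i]$ yields a degree-$i$ endomorphism of $E$. Thus a $G$-invariant collection $(t_g)_{g \in G}$ corresponds to the natural transformation $s \colon \id_{\CC_G} \to \id_{\CC_G}[i]$ given on $(E,\phi)$ by
\[ s^{(E,\phi)} = \sum_{g \in G} \phi_g^{-1}[i] \circ (t_g)^E. \]
The $G$-invariance of $(t_g)$ is precisely what makes each $s^{(E,\phi)}$ commute with the linearization, so this is well-defined; the essential feature is that $\phi_g$ enters linearly and only through the $g$-th summand.

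Next I would compute $\chi_{\ast} s = \chi \circ s \circ \chi^{-1}$. Since $\chi^{-1} \cdot (E,\phi) = (E, \chi^{-1}\phi)$ with $(\chi^{-1}\phi)_g = \chi(g)^{-1}\phi_g$, and $\chi$ is the identity on morphisms, the formula above gives
\[ (\chi_{\ast} s)^{(E,\phi)} = s^{(E,\chi^{-1}\phi)} = \sum_{g \in G} \chi(g)\, \phi_g^{-1}[i] \circ (t_g)^E. \]
Hence, under \eqref{hh:equivaraint cat}, the character $\chi$ acts on the summand $\HH^{\bullet}(\CC, \rho_g)$ by the scalar $\chi(g)$. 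Because $\chi$ is a homomorphism into the abelian group $\BC^{\ast}$, it is constant on conjugacy classes, so $\chi(g) = \chi(\Fc)$ for every $g \in \Fc$; this constancy also shows that the scaling is compatible with the $G$-invariance condition defining $V_{\Fc}$, which relates the $g$- and $hgh^{-1}$-components via \eqref{isom conjugation}. Restricting to $V_{\Fc}$ therefore yields $\chi|_{V_{\Fc}} = \chi(\Fc)\,\id_{V_{\Fc}}$.

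The main obstacle is establishing the explicit formula for $s^{(E,\phi)}$ in the dg/$\infty$-categorical setting in which Theorem~\ref{lem:perry} is proved, rather than in the abelian model where it is transparent: one must verify that Perry's identification (assembled from the forgetful and linearization functors together with the $(\Ind, \mathrm{Res})$-adjunction, as in the sketch of Proposition~\ref{prop:hom_components_equiv}) does send the $g$-th summand to endomorphisms depending linearly on $\phi_g$ alone. Once this linearity is in place, and one observes that $\chi$ alters nothing but the scalar $\chi(g)$ attached to the linearization, the computation above is immediate.
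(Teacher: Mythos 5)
Your proof is correct and follows essentially the same route as the paper: both trace through Perry's isomorphism, identify that the Hochschild class attached to $t_g \colon \id_{\CC} \to \rho_g[i]$ acts on $(E,\phi)$ through a single pre-composition with the linearization (your $\phi_g^{-1}[i]\circ (t_g)^E$ agrees, via naturality of $t_g$ and the cocycle condition, with the paper's $\tilde{t}_g\tilde{\rho}_{g^{-1}}\circ\Phi_{g^{-1}}$), and then observe that conjugation by $\chi$ merely rescales that linearization by $\chi(g)$. The only presentational difference is that the paper builds up in stages (unit class, central elements, general conjugacy class) while you compute once with a global sum formula; both treatments leave the dg-level verification of the explicit correspondence at the level of ``one checks,'' exactly as you flag.
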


\begin{proof}
If a natural transformation $t \colon \id_{\CC} \to \id_{\CC}[i]$ is $G$-invariant,
then $t^A \colon A \to A[i]$ is $G$-invariant for every $(A,\phi) \in \CC_G$.
Hence $t$ lifts to the natural transformation
\begin{equation} \tilde{t} \colon \id_{\CC_G} \to \id_{\CC_G}[i] \label{dfsdf} \end{equation}
defined by $\tilde{t}^{(A,\phi)} \coloneqq t^A$.
Since $\tilde{t}^{(A,\phi)}$ does not depend on the linearization, but only on the underlying object $A$,
we find for all $\chi \in G^{\vee}$ that
\[ (\chi \tilde{t} \chi^{-1})^{(A,\phi)}
= \chi \tilde{t}^{\chi^{-1} (A,\phi)}
= \chi \tilde{t}^{(A,\chi^{-1} \phi)} = \tilde{t}^{(A,\phi)}. \]
Hence, $\tilde{t}$ is $G^{\vee}$-invariant
and the claim is proven for the conjugacy class of the unit. 

More generally, consider an element $h \in G$ in the center of $G$.
As in the proof of Proposition~\ref{rem:successive_quotients} the functor
$\rho_h \colon \CC \to \CC$ lifts to the functor
\[ \tilde{\rho}_h \colon \CC_{G} \to \CC_{G},\ (A,\phi) \mapsto (\rho_h A, \rho_h \phi). \]
Let $t \colon \id_{\CC} \to \rho_h[i]$ be a $G$-invariant natural transformation.
As before, $t$ lifts to a natural transformation
$\tilde{t} \colon \id_{\CC_G} \to \tilde{\rho}_h[i]$
with $\tilde{t}^{(A,\phi)} = t^A$.

Consider the natural transformation $\Phi_h \colon \id_{\CC_G} \to \tilde{\rho}_h$ defined by
\[ (\Phi_h)^{(A,\phi)} = \phi_h \colon A \to h A. \]
Unlike before, this morphism depends on the linearization: For every character $\chi$
we have 
$(\Phi_h)^{(A,\chi \phi)} = \chi(h) \phi_h$,
therefore
\[ (\Phi_h)^{\chi (A,\phi)} = \chi(h) (\Phi_h)^{(A,\phi)}. \]

Consider now the composition
\[ \tilde{t} \tilde{\rho}_{h^{-1}} \circ \Phi_{h^{-1}} \colon \id_{\CC_G} \to \tilde{\rho}_{h^{-1}} \to \id_{\CC_G}[i] \]
which under the isomorphism of Lemma~\ref{lemma124} is precisely the Hochschild cohomology class corresponding to $t$.
Then we have
\begin{align*}
\left( \chi (\tilde{t} \tilde{\rho}_{h^{-1}} \circ \Phi_{h^{-1}})\chi^{-1} \right)^{(A,\phi)}
& = \chi \left( \tilde{t} \tilde{\rho}_{h^{-1}} \circ \Phi_{h^{-1}} \right)^{\chi^{-1}(A,\phi)} \\
& = \left( \tilde{t} \tilde{\rho}_{h^{-1}} \circ \Phi_{h^{-1}} \right)^{\chi^{-1}(A,\phi)} \\
& = ( \tilde{t} \tilde{\rho}_{h^{-1}} )^{\chi^{-1} (A,\phi)} \circ \left( \Phi_{h^{-1}} \right)^{\chi^{-1}(A,\phi)} \\
& = ( \tilde{t} \tilde{\rho}_{h^{-1}} )^{(A,\phi)} \circ \chi^{-1}(h^{-1}) \cdot \left( \Phi_{h^{-1}} \right)^{(A,\phi)} \\
& = \chi(h) \cdot (\tilde{t} \tilde{\rho}_{h^{-1}} \circ \Phi_{h^{-1}})^{(A,\phi)}.
\end{align*} 

We finally consider the general case given by a conjugacy class $\Fc$.
By tracing through the isomorphism of Lemma~\ref{lem:perry}
(compare also to the proof of Proposition~\ref{prop:hom_components_equiv})
one checks that a $G$-invariant natural transformation
\[ t = \bigoplus_{h \in \Fc} t_h \colon \id_{\CC} \to \bigoplus_{h \in \Fc} \rho_h \]
corresponds to the transformation\footnote{One checks that the left hand side
is $G$-invariant for any $(A,\phi) \in \CC_G$ and hence defines a well-defined natural transformation of $\id_{\CC_G}$.}
\[ \sum_{h \in \Fc} \tilde{t}_h \tilde{\rho}w_{h^{-1}} \circ \Phi_{h^{-1}} \colon \id_{\CC_G} \to \id_{\CC_G}. \]
The claim follows now by the same argument as before.
\end{proof}

\subsection{Hochschild homology and Serre functors}
From now on we assume that the category $\CC$ is equipped with a Serre functor $S$.
The Hochschild homology of $\CC$ is then defined by
\[ \HH_{\bullet}(\CC) \coloneqq \HH^{\bullet}(\CC, S) \]
By Lemma~\ref{lemma123} (or, more precisely, its analogue for dg-categories) any equivalence $F \colon \CC \to \CC$ commutes with the Serre functor $S$ in a canonical way.
Hence, $F$ acts on Hochschild homology by conjugation\footnote{See also \cite[Sec.\ 5]{Perry} for the construction of this action in the dg-setting.}
\[ F_{\ast} \colon \HH_{\bullet}(\CC) \to \HH_{\bullet}(\CC), \ a \mapsto F a F^{-1}. \]
Moreover, arguing as in Section~\ref{subsec:Serrefunctor} shows that the Serre functor $S$
lifts to a Serre functor $\tilde{S}$ on the equivariant category $\CC_{G}$.

\begin{rmk}
The existence of Serre functors on the equivariant category can also be argued more abstractly using the notions of 'smooth', 'proper' and 'dualizable' dg-categories
for which we refer to Part I of \cite{Perry2}. This proceeds as follows. 

Let $\CC$ be a smooth and proper dg-category, for example the bounded derived dg-category of coherent sheaves on a smooth and proper variety.
Then $\CC$ is dualizable and hence admits a Serre functor $S$, see \cite[Sec.\ 4.6]{Perry2}.
If we have a finite group $G$ acting on $\CC$, then since the morphisms in the equivariant category $\CC_G$ are precisely the $G$-invariant morphisms in $\CC$,
it is immediate that $\CC_G$ is again proper.
We claim that $\CC_G$ is also smooth: Indeed, we need to show that $\id_{\Ind(\CC_G)} \in \Func( \Ind(\CC_G), \Ind(\CC_G) )$ is a compact object.
By \cite[Lem.\ 3.18]{BeckmannOberdieckModuli} or \cite[Lem.\ 3.6]{Perry} we have $\Ind(\CC_G) = \Ind(\CC)_G$ and combining with \cite[Lem.\ 4.7]{Perry} this yields an isomorphism
\[  \Func( \Ind(\CC_G), \Ind(\CC_G) ) \cong  \Func( \Ind(\CC), \Ind(\CC) )_{G \times G}. \]
By \cite[Lem.\ 3.7]{Perry} it suffices then to show that the image of $\id_{\Ind(\CC_G)}$ under the forgetful morphism
$\Func( \Ind(\CC), \Ind(\CC) )_{G \times G} \to \Func( \Ind(\CC), \Ind(\CC))$ is compact.
But according to \cite[Lem.\ 4.7]{Perry} this image is precisely $\oplus_{g \in G} \rho_g$ and hence as a finite direct sum of compact objects compact
(to show that $\rho_g$ is compact, use that $\id_{\Ind(\CC)}$ is compact, and since $\rho_g$ is invertible, left multiplication by it preserves compact objects,
therefore also $\rho_g \id_{\Ind(\CC)} = \rho_g$ is compact).

Having that $\CC_G$ is both smooth and proper, we find that it is dualizable and hence admits a Serre functor $\tilde{S}$.
Using that by Yoneda's Lemma every Serre functor is characterized by the defining isomorphism
$\Hom(\tau(-, -))^{\vee} \cong \Hom( - , S (- ))$, where $\tau$ is the functor that swaps th two factors (see \cite[Proof of Lem.\ 4.19]{Perry2}),
one finds that the Serre functor $\tilde{S}$ agrees with the one constructed in Section~\ref{subsec:Serrefunctor}.
\end{rmk}

\subsection{Calabi--Yau categories} 
Suppose now further that $\CC$ is a \emph{Calabi--Yau category}, i.e.\ that there exists a $2$-isomorphism
\[ a \colon \id_{\CC} \xrightarrow{\cong} S[-n] \]
for some integer $n$, called the dimension of $\CC$. In particular, we have
\[ \HH_{\bullet}(\CC) \cong \HH^{\bullet}(\CC)[n]. \]
We can ask under which conditions is the equivariant category $\CC_G$ again Calabi--Yau. The answer is very natural and given as follows.

\begin{lemma} \label{lemma_equi_cat_CY}
If the homology class $(a \colon \id_{\CC} \xrightarrow{\cong} S[-n]) \in \HH_{\bullet}(\CC)$ is invariant under the action of $G$, then $\CC_{G}$ is Calabi--Yau of dimension $n$.
Moreover, the induced action of $G^{\vee}$ on $\HH_{\bullet}(\CC_G)$ preserves the class
of the Calabi--Yau form.
\end{lemma}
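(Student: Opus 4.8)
The plan is to produce the Calabi--Yau form on $\CC_G$ by lifting $a$ along the forgetful functor, and then to read off the $G^{\vee}$-invariance from the fact that this lift ignores the linearization. Recall from the preceding subsection that the Serre functor $S$ lifts to the Serre functor $\tilde{S}$ on $\CC_G$ with $\tilde{S}(A,\phi)=(SA,\phi')$, where the linearization $\phi'$ is given by \eqref{new_G_lin}. For any equivariant object $(A,\phi)$ I would set $\tilde{a}^{(A,\phi)} = a^A \colon A \to S[-n]A$, the candidate being a natural transformation $\tilde{a}\colon \id_{\CC_G}\to \tilde{S}[-n]$ whose value depends only on the underlying object $A$. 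Since each $a^A$ is an isomorphism, once $\tilde{a}$ is shown to be a well-defined morphism of $\CC_G$ it will automatically be a $2$-isomorphism, and naturality of $\tilde{a}$ is inherited from that of $a$; this gives the Calabi--Yau structure $\id_{\CC_G}\cong \tilde{S}[-n]$ of dimension $n$.

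The only nontrivial point, and the step I expect to be the main obstacle, is to check that $a^A$ really is a morphism in $\CC_G$, i.e.\ that it intertwines the linearization $\phi$ on $A$ with the linearization $\phi'[-n]$ on $S[-n]A$, and to recognize this compatibility as exactly the $G$-invariance of $a$. Writing out the condition, I need
\[ (t_g^A \circ S\phi_g)[-n] \circ a^A = \rho_g(a^A) \circ \phi_g \qquad (g \in G). \]
Applying naturality of $a \colon \id_{\CC} \to S[-n]$ to the morphism $\phi_g \colon A \to \rho_g A$ rewrites the left-hand side as $t_g^A[-n] \circ a^{\rho_g A} \circ \phi_g$, so after cancelling the isomorphism $\phi_g$ the required identity becomes $\rho_g(a^A) = t_g^A[-n] \circ a^{\rho_g A}$. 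This last equation is precisely the statement that $a$ is fixed by the conjugation action $a \mapsto \rho_g a \rho_g^{-1}$ on $\HH_{\bullet}(\CC) = \HH^{\bullet}(\CC, S)$: unravelling that action uses the canonical commutation isomorphism $t_g \colon S\rho_g \cong \rho_g S$ of Lemma~\ref{lemma123}, and since the lifted linearization $\phi'$ in \eqref{new_G_lin} was built from exactly this $t_g$, the two conditions match on the nose. Thus the hypothesis that $a$ is $G$-invariant is equivalent to $\tilde{a}$ being well-defined, which proves the first assertion.

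For the second assertion I would argue exactly as in the case of the trivial conjugacy class $\Fc=\{1\}$ in the proof of Lemma~\ref{lemma125}. The functor $\chi \in G^{\vee}$ acts by the identity on morphisms and by $(A,\phi)\mapsto (A,\chi\phi)$ on objects, and one checks that $\tilde{S} \chi = \chi \tilde{S}$ canonically, since the linearization transforms as $(\chi\phi)'_g = t_g^A \circ S(\chi(g)\phi_g) = \chi(g)\phi'_g$, that is $(\chi\phi)' = \chi\phi'$. Hence the conjugation action of $\chi$ on the class $[\tilde{a}]\in \HH_{\bullet}(\CC_G)=\HH^{\bullet}(\CC_G,\tilde{S})$ is given on objects by $(\chi \tilde{a} \chi^{-1})^{(A,\phi)} = \tilde{a}^{(A,\chi^{-1}\phi)}$. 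Since $\tilde{a}^{(A,\phi)} = a^A$ does not depend on the linearization, this equals $a^A = \tilde{a}^{(A,\phi)}$, whence $\chi\cdot[\tilde{a}]=[\tilde{a}]$ for every $\chi$. Therefore $G^{\vee}$ preserves the Calabi--Yau form, completing the proof.
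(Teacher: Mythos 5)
Your proposal is correct and follows essentially the same route as the paper: the paper also defines $\tilde{a}^{(A,\phi)} = a^A$ as a lift of $a$ along the forgetful functor (with $G$-invariance of $a$ being exactly what makes $a^A$ equivariant for the linearizations $\phi$ and $\phi'$), notes that the inverse lifts as well so that $\tilde{a}$ is a $2$-isomorphism, and deduces $G^{\vee}$-invariance from the fact that the lift is independent of the linearization, just as in Lemma~\ref{lemma125}. The only difference is one of detail: you write out explicitly the computation identifying the equivariance condition with the conjugation-invariance of $a$ (via the canonical isomorphism $t_g$ of Lemma~\ref{lemma123}), which the paper leaves implicit by referring to the proof of Lemma~\ref{lemma125}.
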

\begin{proof}
As in the proof of Lemma~\ref{lemma125}, if $a$ is $G$-invariant, then the morphism
\[ \tilde{a}^{(A,\phi)} \colon (A,\phi) \to (SA[-n], S\phi[-n]) \]
given by $a^A\colon A \to SA[-n]$ defines a lift
\[ \tilde{a} \colon \id_{\CC_G} \to \tilde{S}[-n]. \]
Similarly, the inverse of $\tilde{a}$ also lifts. Hence, $\tilde{a}$ is a $2$-isomorphism.

Moreover, since $\tilde{a}$ is a lift of a $G$-invariant class,
arguing as in the proof of Lemma~\ref{lemma125} shows that
it is fixed by the action of $G^{\vee}$ by conjugaction.
\end{proof}

\subsection{Hochschild and singular cohomology}
\label{subsec:Hochschild-singular-coh}

Let $X$ be a smooth projective variety and let $D_{dg}(X)$ be the dg-enhancement of $D^b(X)$.
The Hochschild homology and cohomology of $X$ are defined by
\[ \HH_{\bullet}(X) = \HH_{\bullet}( D_{dg}(X) ), \quad \HH^{\bullet}(X) = \HH^{\bullet}( D_{dg}(X) ). \]
One has the Hochschild--Kostant--Rosenberg (HKR) isomorphism
\[ \HH_{n}( D(X) ) \cong \bigoplus_{p-q=n} H^q(X, \Omega_X^p). \]

Consider a Fourier--Mukai transform $\FM_{\CE} \colon D_{dg}(X) \to D_{dg}(Y)$
for a kernel $\CE \in D_{dg}(X \times Y)$.
If $\FM_{\CE}$ is an equivalence, this defines an isomorphism of Hochschild homology by conjugation
\begin{gather*} \FM_{\CE, \ast} \colon \HH_{\bullet}(X) \to \HH_{\bullet}(Y), \ a \mapsto \FM_{\CE} a \FM_{\CE}^{-1}.
\end{gather*}

The kernel $\CE$ also induces a morphism on singular cohomology
\[ \FM_{\CE,\ast} \colon H^{\ast}(X) \to H^{\ast}(Y), \alpha \mapsto q_{\ast}( p^{\ast}(\alpha) \cdot v(\CE) ) \]
where $p,q$ are the projections of $X \times Y$ onto the factors and we let $v(E) = \ch(E) \sqrt{\td_X}$ denote the Mukai vector.
By the main result of \cite{CRV} the action of $\FM_{\CE}$ on Hochschild and singular cohomology are compatible under the HKR isomorphism, i.e.\ the following diagram commutes:
\[
\begin{tikzcd}
\HH_{\bullet}(X)  \ar{d}{\textup{HKR}} \ar{r}{\FM_{\CE,\ast}} &  \HH_{\bullet}(Y) \ar{d}{\textup{HKR}} \\
H^{\ast}(X,\BC) \ar{r}{\FM_{\CE, \ast}} & H^{\ast}(Y, \BC).
\end{tikzcd}
\]

In particular, in order to apply Lemma~\ref{lemma_equi_cat_CY}
to $D_{dg}(X)$ it suffices to check the invariance of the Calabi--Yau form
on singular cohomology, i.e.\ that the element in
\[ H^0(X,\omega_X) = H^{n,0}(X,\BC) \]
corresponding to the isomorphism $\omega_X \cong \CO_X$ is preserved by $G$.

\section{Equivariant categories of elliptic curves}
\label{sec:equiv_cat_ell_curves}
We illustrate some of the results of the previous sections
by applying them to the example of the derived category of coherent sheaves on an elliptic curve.

Let $E$ be a non-singular elliptic curve and let
$G$ be a finite group which acts on $D^b(E)$.
By Lemma~\ref{lemma:FMaction} the action is given by Fourier--Mukai transforms
and hence induces an action on cohomology.
We assume that each $g\in G$ fixes the Calabi--Yau form, i.e.\ that the cohomological Fourier--Mukai transform $\FM_{\CE_g,\ast}$ acts trivially on $H^{1,0}(E)$. 
In this case we also say that the $G$-action on $D^b(E)$ is \emph{Calabi--Yau}.

Our goal in this section is to prove the following:
\begin{thm} \label{thm:elliptic curve}
For any Calabi--Yau action of $G$ on $D^b(E)$,
the equivariant category $D^b(E)_{G}$ decomposes into finitely many derived categories of elliptic curves.
\end{thm}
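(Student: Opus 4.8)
The plan is to reduce to a faithful action by a finite group acting through translations and degree-zero twists, and then to recognise the equivariant category geometrically. First, since the Serre functor of $D^b(E)$ is the shift $[1]$, the category is Calabi--Yau of dimension $1$; by the reduction in Section~\ref{subsec:Hochschild-singular-coh} the Calabi--Yau class is carried by the holomorphic one-form in $H^{1,0}(E)$, and the Calabi--Yau hypothesis says precisely that each $\FM_{\CE_g,\ast}$ fixes it. Thus Lemma~\ref{lemma_equi_cat_CY} shows that $D^b(E)_G$ is Calabi--Yau of dimension $1$. To invoke Theorem~\ref{prop:faithful action} I would first produce a $G$-fixed stability condition: the group $\Aut D^b(E)$ acts by isometries on the complete $\mathrm{CAT}(0)$ space $\Stab(D^b(E))\cong \BC\times\BH$, so the finite group $G$ fixes some $\sigma=(\CA,Z)$ with $D^b(\CA)\cong D^b(E)$, and the action lifts to a dg-enhancement. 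Theorem~\ref{prop:faithful action} then gives a finite orthogonal decomposition $D^b(E)_G=\bigoplus_i\CD_i$ with $\CD_i\cong D^b(E)_{K_i}$ for faithful Calabi--Yau actions of finite groups $K_i$, so it is enough to treat a single faithful Calabi--Yau action of a finite group $K$.

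The crucial point is to identify the auto-equivalences involved. By Lemma~\ref{lemma:FMaction} each $\rho_g$ is a Fourier--Mukai transform, and the Calabi--Yau condition forces it to act trivially on all of $H^1(E,\BZ)$: an integral automorphism fixing the line $H^{1,0}$ also fixes the complex-conjugate line $H^{0,1}$, hence the whole of $H^1$. I would then argue that a \emph{finite-order} auto-equivalence which is trivial on $H^1$ is automatically trivial on the even lattice $H^0\oplus H^2$ as well. Indeed, by Orlov's description of $\Aut D^b(E)$ the even action surjects onto $SL_2(\BZ)$, but any lift of a non-trivial elliptic element of $SL_2(\BZ)$ has infinite order because a suitable power is a non-zero shift (for instance $\FM_{\CP}^{4}\cong[-2]$); removing this shift introduces a factor such as $[1]$ or $(-1)^{\ast}$ acting by $-1$ on $H^1$, contradicting the Calabi--Yau property. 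Hence every $\rho_g$ is cohomologically trivial, so lies in the subgroup generated by the translations $t_x$ $(x\in E)$, the twists by $L\in\Pic^0(E)$ and the even shift $[2]$; since $K$ is finite the shift drops out, and $K$ acts through a finite abelian subgroup of $E\times\widehat E$ by translations and degree-zero twists.

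It then remains to compute $D^b(E)_K$ for such an action, which I would do by induction on $|K|$ using the successive-quotient isomorphism $D^b(E)_K\cong(D^b(E)_H)_{K/H}$ of Proposition~\ref{rem:successive_quotients}. If $K$ contains a non-trivial subgroup $H$ of pure translations, these act freely and Example~\ref{ex:dual action} identifies $D^b(E)_H\cong D^b(E/H)$; if $K$ meets $\Pic^0(E)$ in a non-trivial subgroup $H$, then Example~\ref{ex:actionPic0} identifies $D^b(E)_H\cong D^b(\widetilde E)$ with $\widetilde E=\Spec\big(\bigoplus_{L\in H}L^{-1}\big)$ an isogenous elliptic curve. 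In either case the new category is again the derived category of an elliptic curve, the residual $K/H$-action is again by translations and degree-zero twists, and the inductive hypothesis finishes the step.

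The main obstacle is the auto-equivalence classification of the second paragraph: controlling the interplay between the action on the even lattice and on $H^1$, and in particular pinning down the shift anomaly $\FM_{\CP}^{4}\cong[-2]$ that prevents finite-order Calabi--Yau auto-equivalences from acting non-trivially on $H^0\oplus H^2$. A secondary, more combinatorial difficulty is the ``graph'' case of the induction, where $K$ meets neither $E$ nor $\Pic^0(E)$ and the associativity datum $\theta$ encodes the non-trivial Heisenberg commutator pairing on $E\times\widehat E$; here, as in Section~\ref{subsec:does not act}, the obstruction of Theorem~\ref{lem:obstruction to action} forces the genuinely acting group to be a larger finite group (the $\BZ_4$ of that section) acting honestly, and the equivariant category is computed exactly as in Section~\ref{subsec:equiv cat of not act example}, where $D^b(E')_{\BZ_4}\cong D^b(E')$, so that we stay within translation-and-twist actions and the induction closes.
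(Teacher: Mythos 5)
Your first two steps are essentially sound and track the paper: the equivariant category is $1$-Calabi--Yau by Lemma~\ref{lemma_equi_cat_CY}, and a Calabi--Yau action is cohomologically trivial, hence acts through $E\times\Pic^0(E)$. In fact your treatment of the even cohomology is, if anything, more honest than a minimal one, since you insist on excluding \emph{all} non-trivial finite-order elements of $SL_2(\BZ)$ via the shift anomaly $\FM_{\CP}^4\cong[-2]$ (equivalently, torsion-freeness of the relevant central extension), not only $-\id$. One caveat: the claim that $\Aut D^b(E)$ acts by isometries on a complete CAT(0) metric on $\Stab(D^b(E))$ is unjustified (Bridgeland's metric is not known to have this property), and it is also unnecessary: do the cohomological analysis first, conclude that $G$ preserves $\Coh(E)$, and then $G$ fixes the standard stability condition with heart $\Coh(E)$ and $Z=-\deg+i\operatorname{rk}$; this is exactly how the paper obtains the hypotheses of Theorem~\ref{prop:faithful action} together with the dg-lift.

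The genuine gap is in the final induction, which is also where you diverge from the paper. The paper never classifies subgroups of $E\times\widehat{E}$: it invokes van Roosmalen's classification of abelian $1$-Calabi--Yau categories \cite{VanRoosmalen}, so that after the reduction to a faithful action it only has to exhibit two non-isomorphic simple objects in $\Coh(E)_G$, which it does using the induced stability condition and $K_{\textup{num}}(E)=\BZ^2$. Your induction does not close in the graph case with \emph{vanishing} obstruction. Concretely, let $a\in E$ be a non-zero $2$-torsion point and $g=t_a^{\ast}(-)\otimes\CO_E(a-0_E)$. The relevant commutator (Weil) pairing of $a$ with itself is trivial, so -- in contrast with Section~\ref{subsec:does not act}, which crucially takes $b\neq a$ -- the obstruction of Theorem~\ref{lem:obstruction to action} vanishes and $\BZ_2$ acts honestly; there is no larger group with a trivially acting kernel, so the mechanism of Section~\ref{subsec:equiv cat of not act example} has nothing to bite on. Moreover no non-trivial power of $g$ is a pure translation or a pure twist (the same happens for $t_a^{\ast}(-)\otimes\CL$ with $a$ and $\CL$ of exact order $n$ and trivial pairing), so neither of your first two cases applies: your proposal has no tool to compute this equivariant category. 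The gap can be repaired without van Roosmalen, but it needs an ingredient you never mention, namely conjugation inside $\Aut D^b(E)$: conjugating by the degree-one twist $\CO_E(p)\otimes(-)$ sends $t_a^{\ast}$ precisely to the $g$ above, so $D^b(E)_{\langle g\rangle}\cong D^b(E/\langle a\rangle)$, and a general repair would require normalizing finite subgroups of $E\times\widehat{E}$ with trivial pairing into the translations via the $SL_2(\BZ)$-part of $\Aut D^b(E)$, on top of the obstruction analysis. A smaller issue of the same kind: the faithful action produced by Theorem~\ref{prop:faithful action} need not inject into $\Aut D^b(E)$, and its restriction to a non-cyclic translation subgroup $H$ may be twisted by a class in $H^2(H,\BC^{\ast})$, so Example~\ref{ex:dual action} does not apply verbatim (one gets twisted sheaves on $E/H$, harmless only because the Brauer group of a curve vanishes -- an argument that must be supplied). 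As it stands, the last third of the proof is an outline with a missing case rather than a proof.
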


The subgroup of $\Aut D^b(E)$ acting trivially on cohomology is isomorphic to
\begin{equation} \label{ZxExPic}
\BZ \times E \times \Pic^0(E)
\end{equation}
where the first summand is identified with the shift by $[2]$, the second one with the pullback along translations and the last one with tensoring with degree~0 line bundles. 
We first show that any Calabi--Yau action acts trivially on cohomology.

\begin{lemma}
For any Calabi--Yau action by a finite group $G$ on $D^b(E)$
the induced action on $H^\ast(E,\BZ)$ is trivial.
\end{lemma}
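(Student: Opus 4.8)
The plan is to show that the cohomological Fourier--Mukai transform $\FM_{\CE_g,\ast}$ acts trivially on $H^{\ast}(E,\BZ)$ for every $g \in G$, by combining the Calabi--Yau hypothesis with the representation-theoretic constraint that $G$ is finite. First I would recall that the action on cohomology factors through the group of Hodge isometries of the Mukai lattice $\widetilde{H}(E,\BZ) = H^0 \oplus H^2 \oplus H^4$ preserving the Mukai pairing, and that any auto-equivalence acts by an element of this orthogonal group; concretely on $H^1(E,\BZ) \cong \BZ^2$ the induced map lands in $\mathrm{GL}_2(\BZ)$ preserving the symplectic (intersection) form, hence in $\mathrm{SL}_2(\BZ)$ together with the Hodge structure.

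The key point is the Calabi--Yau assumption. Fixing the form $H^{1,0}(E) = H^0(E,\omega_E)$ means the induced action on the one-dimensional space $H^{1,0}(E)$ is trivial. Since the action is defined over $\BZ$ and $H^1(E,\BZ)\otimes \BC = H^{1,0} \oplus H^{0,1}$ with $H^{0,1} = \overline{H^{1,0}}$, an integral transformation fixing $H^{1,0}$ pointwise must also fix $H^{0,1}$ pointwise, and therefore acts as the identity on all of $H^1(E,\BC)$, hence on $H^1(E,\BZ)$. This already pins down the action on $H^1$. It remains to treat $H^0$, $H^2$ (and the full Mukai grading); on $H^0(E,\BZ)$ and $H^2(E,\BZ)$ an orientation- and pairing-preserving integral transform can a priori only act by $\pm 1$, and compatibility with the trivial $H^1$-action together with the multiplicativity of $\FM_{\CE_g,\ast}$ under composition forces these signs to be $+1$ as well. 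Here I would use that $G$ is finite: the composite $\FM_{\CE_g,\ast} \circ \FM_{\CE_h,\ast} = \FM_{\CE_{gh},\ast}$ gives a genuine group homomorphism $G \to \Aut H^{\ast}(E,\BZ)$, and the only torsion elements compatible with a trivial action on $H^1$ and preservation of the Mukai pairing are trivial.

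Alternatively, and perhaps more cleanly, I would invoke the explicit description \eqref{ZxExPic} of the subgroup of $\Aut D^b(E)$ acting trivially on cohomology: the full image of $\Aut D^b(E)$ in $\Aut H^{\ast}(E,\BZ)$ is a well-understood arithmetic group (essentially governed by the $\mathrm{SL}_2(\BZ)$-action on the Mukai lattice via the spinor/adjoint representation), and the Calabi--Yau condition restricts the image of $G$ to transformations fixing the holomorphic form. The argument above shows this forces the action on $H^1$ to be trivial; then since the image of $G$ in $\Aut H^{\ast}(E,\BZ)$ is a finite subgroup whose action on $H^1$ is trivial, one checks directly that such transformations act trivially on the whole cohomology ring, which is generated in degrees $0$ and $1$ over $\BZ$ up to the Mukai grading conventions.

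The main obstacle I anticipate is bookkeeping the $H^0$ and $H^4$ (i.e.\ $H^{\mathrm{even}}$) components: fixing $H^{1,0}$ controls $H^1$ immediately, but one must rule out the simultaneous sign flip on $H^0 \oplus H^4$ that still preserves both the Mukai pairing and $H^1$. The cleanest resolution is the finiteness of $G$ combined with the observation that such a sign flip corresponds to (a shift composed with) the cohomological action of an auto-equivalence of infinite order, or else is incompatible with the multiplicative structure; spelling out precisely which auto-equivalence realizes each lattice automorphism, and confirming the Calabi--Yau constraint excludes the nontrivial sign, is the one step that requires genuine care rather than routine verification.
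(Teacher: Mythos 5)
Your treatment of $H^1$ is correct and is exactly the paper's: integrality plus the fact that fixing $H^{1,0}$ pointwise forces fixing $H^{0,1} = \overline{H^{1,0}}$, hence all of $H^1(E,\BZ)$. The gap is in the even-degree part, which is the actual crux. Your first resolution --- that ``the only torsion elements compatible with a trivial action on $H^1$ and preservation of the Mukai pairing are trivial'' --- is false: the simultaneous sign flip $\tau = -\id$ on $H^0 \oplus H^2$, $+\id$ on $H^1$, \emph{is} an order-two isometry of the lattice, so finiteness of the image of $G$ in $\Aut H^{\ast}(E,\BZ)$ (which is automatic, $G$ being finite) excludes nothing. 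Your alternative resolution via the ``cohomology ring generated in degrees $0$ and $1$'' also fails, because the cohomological Fourier--Mukai transform is not a ring homomorphism --- it is convolution against the Mukai vector of the kernel (e.g.\ the Poincar\'e-bundle transform exchanges $H^0$ and $H^2$) --- so triviality on $H^1$ and multiplicativity give no control over $H^0 \oplus H^2$.

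What is actually needed, and what the paper does, is an argument about which \emph{auto-equivalences} (not lattice isometries) realize $\tau$: the equivalence $F = [1]\circ \iota$ (odd shift composed with inversion) induces $\tau$, and since the kernel of $\Aut D^b(E) \to \Aut H^{\ast}(E,\BZ)$ is $\BZ\cdot[2] \times E \times \Pic^0(E)$, every auto-equivalence inducing $\tau$ equals $\iota \circ t_a \circ (\CL \otimes -) \circ [\text{odd shift}]$; its even powers carry a nonzero shift, so it has infinite order and cannot lie in the image of the finite group $G$. You do gesture at precisely this in your final paragraph (``such a sign flip corresponds to (a shift composed with) the cohomological action of an auto-equivalence of infinite order''), but you explicitly defer it as the step requiring genuine care, and the two shortcuts you offer in its place are incorrect. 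So the proposal is incomplete exactly at the point where the proof has content.
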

\begin{proof}
Recall that auto-equivalences of elliptic curves induce Hodge isometries on the integral cohomology lattice. 
Since $H^1(E,\BZ)$ is preserved by $\FM_{\CE_g,\ast}$ and by assumption the action on $H^{1,0}$ is trivial, it follows that $\FM_{\CE_g,\ast}$ acts trivially on $H^1(E,\BZ)$. 

The isometry $\FM_{\CE_g,\ast}$ induces an isometry of $H^0(E,\BZ) \oplus H^2(E,\BZ)$ which, as a lattice, is isometric to the hyperbolic plane $U$. We have $\Aut U = \BZ_2 \times \BZ_2$. Moreover, only $\pm \id$ can occur in our case, since the action of every auto-equivalence is orientation-preserving. Let us denote this isometry by $\tau$. 

Let $\iota \colon E \to E$ denote the morphism given by multiplication with $-1$.
Observe that the auto-equivalence $F=[1] \circ \iota$ fixes $H^1(E,\BZ)$ and acts as $-\id$ on $H^0(E)\oplus H^2(E)$, hence its cohomological Fourier--Mukai transform yields $\tau$. 
Given any auto-equivalence $F' \colon D^b(E) \to D^b(E)$ which induces $\tau$,
the composition $F \circ F'$ acts trivial on cohomology and hence lies in the subgroup \eqref{ZxExPic}.
It follows that
every auto-equivalence inducing $\tau$ on cohomology has infinite order.
Hence, it can never be contained in the image of $G$ in $\Aut \CD$.
\end{proof}

\begin{proof}[Proof of Theorem~\ref{thm:elliptic curve}]
From the lemma it follows that the image of $G$ in $\Aut D^b(E)$ maps to $E \times \Pic^0(E)$.
This shows that $G$ preserves the category $\Coh(E)$ 
and hence lifts to an action of a dg-enhancement of $D^b(E)$.
By Theorem~\ref{prop:faithful action} we may assume that the action is faithful.
Moreover, by Lemma~\ref{lemma_equi_cat_CY} the equivariant derived category 
\[ D^b(E)_G \cong D^b(\Coh(E)_G) \]
is again a 1-Calabi--Yau category. 
In particular, $\Coh(E)_G$ is an indecomposable 1-Calabi--Yau abelian category. 

To prove the claim we apply the classification \cite[Thm.\ 1.1]{VanRoosmalen} of such categories. More precisely, the proof of \cite[Thm.\ 4.7]{VanRoosmalen} shows that as soon as $\Coh(E)_G$ has two non-isomorphic simple objects (i.e.\ objects $T$ such that $\Hom(T,T)=\BC$), then $\Coh(E)_G$ is equivalent to the category $\Coh(E')$ for some elliptic curve $E'$.

Hence we need to find two simple, non-isomorphic objects in $\Coh(E)_G$. Since $G'$ acts trivially on the stability manifold, we know from Section~\ref{subsec:stab_conditions} that $D^b(\Coh(E)_G)$ has again a stability condition satisfying the support property. In particular, finite Jordan--Hölder filtrations of semistable objects exist. If, up to isomorphism, there would only exist one simple object, then there would be only one stable object $T$ in $D^b(\Coh(E)_G)$. 
Since $pq$ acts by multiplication by $|G|$ on the numerical $K$-group of $\Coh(E)$,
this implies that $p(E)$ generates up to finite index the numerical $K$-group of $E$.
However, clearly we have 
\[ K_{\textup{num}}(E)=\BZ^2 \]
which gives a contradiction.
\end{proof}

\end{document}